\theoremstyle{plain}
\newtheorem{theorem}{Theorem}[section]
\newtheorem{lemma}[theorem]{Lemma}
\newtheorem{corollary}[theorem]{Corollary}
\theoremstyle{definition}
\newtheorem{example}[theorem]{Example}
\theoremstyle{remark}
\newtheorem{remark}[theorem]{Remark}
\newcommand{\Q}{\mathbb Q}
\newcommand{\Z}{\mathbb Z}
\DeclareMathOperator{\lcm}{lcm}
\DeclareMathOperator{\ord}{ord}
\newcommand*\samethanks[1][\value{footnote}]{\footnotemark[#1]}
\title{A $p$-adic lower bound for a linear form in logarithms}
\author[1]{Neea Paloj\"arvi\thanks{The work of both authors was supported by The Emil Aaltonen Foundation.}}
\author[2]{Louna Sepp\"al\"a\samethanks}
\affil[1,2]{Department of Mathematics and Statistics, University of Helsinki, P.O.\ Box 68, 00014 Helsinki}
\affil[1]{neea.palojarvi@helsinki.fi}
\affil[2]{louna.seppala@helsinki.fi}
\date{}
\begin{document}

\maketitle

\vspace{-1cm}

\begin{abstract}
Linear forms in logarithms have an important role in the theory of Diophantine equations. In this article, we prove explicit $p$-adic lower bounds for linear forms in $p$-adic logarithms of rational numbers using Padé approximations of the second kind.
\end{abstract}

\noindent\textbf{Keywords:} $p$-adic, logarithm, linear form, lower bound, explicit estimate

\noindent\textbf{MSC2020:} 11J61

\section{Introduction}





When certain numbers are linearly independent, it is of interest to study how small their linear combination can be. Lower bounds for linear forms are closely tied to measures of irrationality and transcendence which are an essential part of transcendental number theory. Baker's famous work \cite{Baker1966, Baker1967a, Baker1967b} in generalising the Gel'fond-Schneider theorem started an entire branch of number theory concentrating on linear forms in logarithms. An important application of these is found in the theory of Diophantine equations, where linear forms in logarithms can be used to find bounds for the possible solutions (see, e.g., \cite{Bugeaud2002,Bugeaud2018,Bujacic2016,Gelfond1940}.)


The $p$-adic theory of logarithmic forms has also been developed regularly. In 1935, Mahler \cite{Mahler1935} established the $p$-adic analogue of the Gel'fond-Schneider theorem. Gel'fond \cite{Gelfond1940} proved a quantitative estimate for linear forms in two $p$-adic logarithms, later refined by Schinzel \cite{Schinzel1967}. Several researchers have made advances in estimates for linear forms in an arbitrary number of $p$-adic logarithms; see, e.g., \cite{Ballay2019,Bugeaud2002,Dong1995,Kaufman1971,Yu1989,Yu1990,Yu1994,Yu1998,Yu1999,Yu2007}. Some of the results are even explicit: In a series of papers, Yu \cite{Yu1989,Yu1990,Yu1994,Yu1998,Yu1999,Yu2007} proved lower bounds for $p$-adic linear forms in logarithms in an algebraic number field. Further, V\"a\"an\"anen and Xu \cite{Vaananen1988} proved lower bounds for linear forms in $G$-functions, including logarithms, in certain rational points in the $p$-adic case. 

In this article, we derive an explicit lower bound for the $p$-adic absolute value of a linear form in $p$-adic logarithms of given rational numbers by using Pad\'e approximations. Our results are of type
\begin{equation}
\label{eq:ourForm}
\left|\Lambda_p\right|_p >c H^{-\omega},
\end{equation}
where $\Lambda_p$ is a linear form in $p$-adic logarithms, $H$ is the largest absolute value of the coefficients of the linear form, $c$ is a positive constant which does not depend on $H$, and $\omega$ is an exponent which may depend on $H$. In some sense we present a $p$-adic analogue to the work of Rhin and Toffin \cite{Rhin1986}, where they study logarithms of algebraic numbers in the Archimedean case. Our work can also be seen as a continuation of the work by Heimonen, Matala-aho and V\"a\"an\"anen \cite{Heimonen1993} where they proved effective results for linear forms containing one logarithm.  

As, for example, Flicker mentions \cite[p.\ 397]{Flicker1977}, the best possible exponent for $H$ is of the form $-m-1-\varepsilon$, where $m+1$ is the number of terms in the linear form $\Lambda_p$ and $\varepsilon$ is an arbitrarily small real number when $H$ is large enough. We obtain this result in some of the cases (see \Cref{corollary:m1first} and \Cref{thm:mainresult4}) even though the exponents may not always be the best possible.

Compared to V\"a\"an\"anen and Xu \cite[Corollary 2]{Vaananen1988}, our results do not depend on that many different terms. Also, in order to apply our results, there is no need to know the functions' representations as $G$-functions or their system of linear differential equations as they are needed in order to apply V\"a\"an\"anen and Xu's result. The detailed formulation of V\"a\"an\"anen and Xu's Corollary 2 and some more precise comparison can be found in Appendix \ref{app:VaananenXu}.

In his work (see e.g. \cite{Yu2007}), Yu considered more general cases and hence the bounds proved by Yu are considerably smaller in the case of rational numbers compared to our results. See Appendix \ref{app:Yu} for more details.

We approach the problem as follows: In \Cref{sec:results}, the results are described in a more detailed way. \Cref{sec:generalidea} contains the general idea of the proofs. \Cref{sec:Pade,sec:det,sec:estimates,sec:Contradiction} are devoted to some preliminary results which are used in \Cref{sec:proofs} to prove the main theorems. At the end of the paper, in \Cref{sec:examples}, we give some examples of the results.

\section{Results}
\label{sec:results}

In this section we formulate the main results. First of all a few definitions are needed:

Fix a prime number $p$. The logarithm series
$$
\boldsymbol{\log}(1+t) = \sum_{n=0}^\infty \frac{(-1)^n}{n+1} t^{n+1}
$$
converges $p$-adically when $|t|_p < 1$. The boldface $\boldsymbol{\log}(x)$ is used to denote the $p$-adic logarithm, whereas non-bolded logarithms are usual real valued logarithms with base $e$.

Let $\alpha_1, \ldots, \alpha_m$ be rational numbers and $\lambda_0, \lambda_1, \ldots, \lambda_m$ be integers. We denote
$$
\Lambda_p := \lambda_0 + \lambda_1 \boldsymbol{\log} (1+\alpha_1) + \ldots + \lambda_m  \boldsymbol{\log}  (1+\alpha_m)
$$
and
\begin{equation*}
M \coloneqq \max_{1 \le i \le m} \left|\alpha_i \right|, \quad \alpha \coloneqq \max_{1 \le i \le m} \left|\alpha_i\right|_p, \quad\text{and}\quad H \coloneqq \max_{0 \le i \le m} \left|\lambda_i \right|.
\end{equation*}
Throughout the article, we assume $\alpha <1$ and thus $M \neq 1$.
Furthermore, let $Q>0$ be an integer such that $Q\alpha_i \in \Z$ for all $i=1,2,\ldots, m$.
Denote also
\begin{equation}
\label{eq:fMQa}
    f(m, M, Q, \alpha)\coloneqq 2^{-1}\cdot 3^{-m}e^{-1.03883m}  Q^{-m} M^{-m} \alpha^{-m-1}.
\end{equation}

\subsection{Case $M>1$}
\label{sec:resultsMgreater}


In this section, we describe the results for the case $M>1$. Denote
\begin{multline}\label{def:R}
    R_1(m,M,Q,\alpha) = 1.03883m +m\log Q+ \log(m+1) + (m-1)\log 2 \\
    + (m+1)\log M - \log(M-1) + \log(m+2)+ \log \alpha -\log \log f(m, M, Q, \alpha),
\end{multline}
\begin{equation*}\label{eq:cMlarge}
\begin{aligned}
    c_1(m,M, Q, \alpha)= \frac{M-1}{6^{m}(m+1)e^{2.07766m} Q^{2m}M^{2m+1}} \cdot e^{\left(\frac{(m+1)\log \alpha}{\log f(m,M,Q,\alpha)}+1\right)\left(R_1(m,M,Q,\alpha)+1\right)},
\end{aligned}    
\end{equation*}
and
\begin{equation*}\label{eq:omegaMlarge}
\begin{aligned}
     \omega_1(m, M, Q, \alpha, H)= \;&\left(-\frac{(m+1) \log\alpha}{\log f(m,M,Q,\alpha)}-1\right) \\
     &\cdot\left(1+\frac{\log\left(\log H+R_1(m,M,Q,\alpha)\right)}{\log H}\right)+1.
\end{aligned}     
\end{equation*}

Using this notation, the first main result is stated as follows:
\begin{theorem}
\label{thm:mainresult2}
Let $m \ge 1$. Let $\lambda_0, \lambda_1, \ldots, \lambda_m \in \Z$ and $\alpha_1, \ldots, \alpha_m \in \Q$ be given numbers such that $\lambda_i \neq 0$ for some $i$ and the numbers $\alpha_i$ are pairwise distinct and non-zero. Assume also that $M >1$, $\alpha <1$, $H>1$,
\begin{equation}
\label{eq:assumpless1}
f(m,M,Q,\alpha)  >1
\end{equation}
and
\begin{equation}
\label{eq:condW1}
-\frac{1}{e^{R_1(m,M,Q,\alpha)}H} = \frac{-\log f\left(m,M,Q,\alpha  \right)}{2^{m-1} e^{1.03883m} Q^m H \cdot \frac{M^{m+1}}{M-1} \cdot (m+1)(m+2) \alpha} \ge -\frac{1}{e}.
\end{equation}

Then
$$
| \Lambda_p |_p > c_1(m,M,Q,\alpha)H^{-\omega_1(m, M,Q, \alpha, H)}.
$$
\end{theorem}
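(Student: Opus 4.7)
The plan is to prove \Cref{thm:mainresult2} by contradiction, using Padé approximations of the second kind (to be developed in \Cref{sec:Pade}) to produce explicit simultaneous rational approximations to the $p$-adic logarithms $\boldsymbol{\log}(1+\alpha_i)$. Concretely, for each positive integer $n$ — a free degree parameter to be optimized later as a function of $H$ — I would construct integer vectors $(B_0^{(k)}, B_1^{(k)}, \ldots, B_m^{(k)})$ and auxiliary quantities
\[
L_k := B_0^{(k)} + \sum_{i=1}^m B_i^{(k)} \boldsymbol{\log}(1+\alpha_i), \qquad k = 0, 1, \ldots, m,
\]
such that each $L_k$ is small $p$-adically (via explicit remainder bounds), while the coefficients $B_i^{(k)}$ have controlled Archimedean size and a common integer denominator bounded in terms of $Q^n$ and factorial factors — precisely the quantities encoded in $f(m,M,Q,\alpha)$.

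Next I would form the $(m+1)\times(m+1)$ matrix $D$ whose first row is $(\lambda_0, \lambda_1, \ldots, \lambda_m)$ and whose remaining $m$ rows come from the coefficient vectors of $m$ suitably shifted Padé systems. The nonvanishing argument of \Cref{sec:det} — one of the main structural ingredients — guarantees that for an appropriate shift one has $\det D \neq 0$. Expanding along the first row and substituting the defining relation of $\Lambda_p$ rewrites the determinant so that the logarithms cancel and only a combination of $\Lambda_p$ and the $L_k$ (weighted by $m$-minors) survives.

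The contradiction step is then as follows. Assuming $|\Lambda_p|_p \le c_1 H^{-\omega_1}$, the ultrametric inequality together with the $p$-adic estimates of \Cref{sec:estimates} yields an upper bound on $|\det D|_p$. On the other hand, $\det D$ is a nonzero rational number whose denominator is explicitly controlled by powers of $Q$, so it satisfies a trivial lower bound of the form $|\det D|_p \ge (\text{denominator of }\det D)^{-1}$ in the relevant $p$-adic sense. Combining the two forces an inequality on $|\Lambda_p|_p$ which, after choosing $n$ optimally as in \Cref{sec:Contradiction}, produces the claimed estimate with exactly the stated $c_1$ and $\omega_1$.

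The main obstacle will be twofold. First, the nonvanishing of $\det D$ under all admissible shifts must be established without sacrificing explicitness; this is where the pairwise distinctness and nonvanishing of the $\alpha_i$ enter essentially, through the Padé structure. Second, the optimal balancing of $n$ against $H$ leads to a transcendental equation of the shape $n e^{cn} \asymp H$, solved through the Lambert $W$ function: hypothesis \eqref{eq:condW1} is precisely the inequality $-1/(e^{R_1}H) \ge -1/e$ guaranteeing that the appropriate branch of $W$ applies, while \eqref{eq:assumpless1} ensures that the Padé construction actually produces nontrivially small approximations. Once these two ingredients are in place, substituting the optimal $n$ into the contradiction inequality and simplifying yields the closed-form $\omega_1$ and $c_1$ in the statement.
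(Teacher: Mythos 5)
Your overall skeleton --- type II Padé approximants, a determinant-based nonvanishing step, the product formula, and a Lambert-$W$ optimization of the degree parameter, with \eqref{eq:condW1} ensuring the branch $W_{-1}$ is applicable and \eqref{eq:assumpless1} ensuring the approximations are nontrivially small --- matches the paper's, and you identify the roles of the hypotheses correctly. But the central mechanism as you describe it does not work. Padé approximations of the second kind produce a common coefficient $B_{k,\mu,0}(1)$ and \emph{individual} remainders $S_{k,\mu,j}(1)=B_{k,\mu,0}(1)\boldsymbol{\log}(1+\alpha_j)-B_{k,\mu,j}(1)$ that are $p$-adically small; they do not make your quantities $L_k=B_0^{(k)}+\sum_i B_i^{(k)}\boldsymbol{\log}(1+\alpha_i)$ small --- that is a property of approximations of the \emph{first} kind, a different construction. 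Hence the bordered determinant $D$ with first row $(\lambda_0,\ldots,\lambda_m)$, expanded so that ``only $\Lambda_p$ and the $L_k$ weighted by $m$-minors survive,'' is not available here. The paper instead uses the full $(m+1)\times(m+1)$ matrix of the $B_{k,\mu,j}(1)$ (Rhin--Toffin, \Cref{lemma:det}) only to guarantee that for every $k$ some \emph{single} row yields $T(k,\mu)=\sum_{j}\lambda_j Q^{mk+m}B_{k,\mu,j}(1)\neq 0$; the identity $Q^{mk+m}B_{k,\mu,0}(1)\Lambda_p=T(k,\mu)-\sum_{j\ge1}\lambda_j Q^{mk+m}S_{k,\mu,j}(1)$ together with the product formula applied to the nonzero integer $T(k,\mu)$ then does all the work, one row at a time.

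Two further concrete problems. Your Liouville step is mis-stated: for a nonzero rational the $p$-adic absolute value is bounded below by the reciprocal of the Archimedean size of its numerator (after clearing denominators), not by the reciprocal of its denominator; this is precisely why the paper multiplies by $Q^{mk+m}$ to get an integer and then needs the Archimedean upper bound \eqref{eq:estW2} on $T(k,\mu)$ --- that bound is where $H$, $M$, $Q$ enter and where $c_1$ and $\omega_1$ actually come from. Moreover, even if you repaired the determinant route (e.g.\ via a type I construction), your claim that it yields ``exactly the stated $c_1$ and $\omega_1$'' is unsupported: the minors in your expansion are products over $m$ Padé rows, so their Archimedean size grows roughly like the $m$-th power of a single-row bound in $k$, which changes the optimization in $k$ and hence the exponent. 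Finally, the paper does not assume the negation of the theorem: it assumes $|B_{k,\mu,0}(1)\Lambda_p|_p\le|\lambda_1 S_{k,\mu,1}(1)+\ldots+\lambda_m S_{k,\mu,m}(1)|_p$, refutes this for the chosen $k$ (\Cref{lemma:contr2}, using \eqref{eq:condW1} and \Cref{lemma:estLambertW}), and then reads off the lower bound directly from $1\le|T(k,\mu)|\,|T(k,\mu)|_p\le|T(k,\mu)|\,|\Lambda_p|_p$; you would need to supply the analogous quantitative chain to recover the stated constants.
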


We would like to give a more concrete idea of how the exponent in Theorem \ref{thm:mainresult2} looks like when the number $H$ is large enough. Hence, in the next corollaries we provide some results which follow from Theorem \ref{thm:mainresult2} and may describe the result better but are not as sharp as Theorem \ref{thm:mainresult2}. The first of these corollaries emphasizes that the exponent of $H$ in Theorem \ref{thm:mainresult2} essentially consists of a term which does not depend on $H$ plus a term of size $\log\log H/\log H$ when $H \to \infty$:

\begin{corollary}
\label{corollary:loglogH}
Assume that the conditions of Theorem \ref{thm:mainresult2} hold, and also that 
$$
H\ge \max\left\{3, e^{R_1(m,M,Q,\alpha)}\right\}. 
$$
Then
$$
\left|\Lambda_p\right|_p>c_1(m,M,Q,\alpha)H^{\frac{(m+1)\log\alpha}{f(m,M,Q,\alpha)}+11.633\cdot\left(1+\frac{(m+1)\log\alpha}{f(m,M,Q,\alpha)}\right)\frac{\log\log H}{\log H}}.
$$
\end{corollary}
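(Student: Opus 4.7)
The plan is to apply \Cref{thm:mainresult2} and then compare the two exponents of $H$. Setting $B := -(m+1)\log\alpha/\log f(m,M,Q,\alpha)$ and $R := R_1(m,M,Q,\alpha)$, I would first rewrite the theorem's exponent in the compact form
\begin{equation*}
-\omega_1 \;=\; -B \;-\; (B-1)\,\frac{\log(\log H + R)}{\log H}
\end{equation*}
and the corollary's exponent (reading the ``$f$'' in its denominator as ``$\log f$'', in line with \Cref{thm:mainresult2}) as $-B - 11.633(B-1)\log\log H/\log H$. Since $H > 1$, the desired bound $|\Lambda_p|_p > c_1 H^{\text{cor.\ exp.}}$ then reduces to the single numerical inequality
\begin{equation*}
(B-1)\,\bigl[\,11.633\,\log\log H \;-\; \log(\log H + R)\,\bigr] \;\ge\; 0.
\end{equation*}

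To verify this, I would check two things. First, $B > 1$: the definition \eqref{eq:fMQa} yields
\begin{equation*}
\log f + (m+1)\log\alpha \;=\; -\log 2 - m\bigl(\log 3 + 1.03883 + \log Q + \log M\bigr),
\end{equation*}
which is strictly negative for $m\ge 1$, $Q\ge 1$, $M>1$; combined with $\log f > 0$ from \eqref{eq:assumpless1}, this gives $-(m+1)\log\alpha > \log f$, so $B > 1$. Second, $11.633\log\log H \ge \log(\log H + R)$: the assumption $H \ge e^{R}$ gives $\log H + R \le 2\log H$, so $\log(\log H + R) \le \log 2 + \log\log H < 1 + \log\log H$; the assumption $H \ge 3$ gives $\log\log H \ge \log\log 3$, and since $10.633 \cdot \log\log 3 > 1$ one obtains $1 + \log\log H \le 11.633\log\log H$, closing the argument.

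The main (and really only) delicacy is making sure the sign of $B-1$ enters the final inequality with the correct orientation and that the constant $11.633 \approx 1 + 1/\log\log 3$ is indeed large enough at the boundary $H = 3$; the numerical margin is tiny, which explains the precise value of the constant. Everything else is bookkeeping in \Cref{thm:mainresult2}.
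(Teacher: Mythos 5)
Your argument is correct and takes essentially the same route as the paper: both reduce the claim to showing that $-\omega_1$ dominates the corollary's exponent, use $f(m,M,Q,\alpha)>1$ together with the explicit form of $f$ to get the sign of $B-1$, and bound $\log(\log H+R_1)$ by $11.633\log\log H$ using $R_1\le\log H$ and $H\ge 3$ --- your step $\log(\log H+R_1)\le\log(2\log H)<1+\log\log H$ simply replaces the paper's mean-value-theorem estimate and its case split on the sign of $R_1$, and leads to the same (tight but valid) numerical condition $1+1/\log\log 3<11.633$. Your reading of the denominator ``$f$'' as ``$\log f$'' also matches what the paper's own proof actually uses.
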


For any real number $\varepsilon$, we have $|\Lambda_p|_p \gg H^{-1-m-\varepsilon}$, and, as mentioned in the Introduction, this is the best possible exponent for $H$. The next corollary shows that our \Cref{thm:mainresult2} leads to this estimate in some cases. 

\begin{corollary}
\label{corollary:m1first}
Let $\varepsilon \in (0,3]$ be a real number. Assume that the numbers $m$, $\lambda_i$ ($i=0,1,\ldots, m$), $\alpha_j$ ($j=0,1,\ldots, m$), $M$, $Q$, $\alpha$ and $H$ satisfy all of the conditions in \Cref{thm:mainresult2}. Suppose also that
\begin{equation}
\label{eq:m1epsilonpart}
    \frac{(m+1)\log\alpha}{\log f(m,M,Q,\alpha)} \ge -m-1-\frac{\varepsilon}{3}
\end{equation}
and
\begin{equation}
\label{eq:condHm1Big}
    H \ge \max\left\{\left(c_1(m,M,Q,\alpha)\right)^{-3/\varepsilon}, \frac{1}{2}e^{-\left(3m/\varepsilon+1\right)W_{-1}\left(-2^{-\varepsilon/(3m+\varepsilon)}\frac{\varepsilon}{3m+\varepsilon}\right)}\right\}.
\end{equation}

Then
$$
\left|\Lambda_p\right|_p > H^{-m-1-\varepsilon}.
$$
\end{corollary}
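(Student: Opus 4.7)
The plan is to feed the estimate from \Cref{thm:mainresult2},
\[
|\Lambda_p|_p > c_1(m,M,Q,\alpha)\, H^{-\omega_1(m,M,Q,\alpha,H)},
\]
into the target bound. Taking logarithms, the goal $|\Lambda_p|_p > H^{-m-1-\varepsilon}$ is equivalent to
\[
\log(c_1^{-1}) \le (m+1+\varepsilon-\omega_1)\log H.
\]
I would split the ``budget'' $\varepsilon\log H$ into two equal halves: one half absorbs $\log(c_1^{-1})$ via the first bound in \eqref{eq:condHm1Big}, and the other half controls the exponent $\omega_1$ via the second bound together with \eqref{eq:m1epsilonpart}.

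Write $A := -\frac{(m+1)\log\alpha}{\log f(m,M,Q,\alpha)} - 1$ and $L := \frac{\log(\log H + R_1(m,M,Q,\alpha))}{\log H}$, so that $\omega_1 = A(1+L)+1$. The condition $H \ge c_1^{-3/\varepsilon}$ is just $\log(c_1^{-1}) \le (\varepsilon/3)\log H$, so the problem reduces to showing
\[
\omega_1 \le m+1+\tfrac{2\varepsilon}{3}.
\]
Hypothesis \eqref{eq:m1epsilonpart} is precisely $A \le m+\varepsilon/3$. Substituting into $\omega_1 = A(1+L)+1$ and rearranging, this bound on $\omega_1$ follows from
\[
L \le \beta := \frac{\varepsilon}{3m+\varepsilon} = \frac{1}{3m/\varepsilon+1},
\]
equivalently, from the transcendental inequality $\log H + R_1 \le H^{\beta}$.

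The main obstacle is verifying this last inequality from the second entry of the maximum in \eqref{eq:condHm1Big}. Setting $w := W_{-1}(-2^{-\beta}\beta)$ and using the defining identity $w e^{w} = -2^{-\beta}\beta$, a short computation shows that the threshold $H_0 := \tfrac12 \exp(-w/\beta)$ satisfies $H_0^{\beta} = -w/\beta = \log(2H_0)$. Since the function $H\mapsto H^{\beta}-\log H$ has a unique minimum at $H=\beta^{-1/\beta}$ and is strictly increasing thereafter, the condition $H\ge H_0$ forces $H^{\beta}\ge \log H + \log 2$. To upgrade this to $H^{\beta}\ge \log H + R_1$, I would then invoke the other inequality $H\ge c_1^{-3/\varepsilon}$: because the explicit formula for $c_1$ contains the factor $\exp(-A(R_1+1))$, this lower bound forces $\log H$ to grow linearly in $R_1$, so $H^{\beta}$ in turn grows exponentially in $R_1$, easily swallowing the extra $R_1-\log 2$. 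Assembling the three steps produces the claimed bound $|\Lambda_p|_p > H^{-m-1-\varepsilon}$.
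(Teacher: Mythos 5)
Your overall strategy coincides with the paper's: use $H\ge c_1(m,M,Q,\alpha)^{-3/\varepsilon}$ to absorb the constant (so $c_1\ge H^{-\varepsilon/3}$), use \eqref{eq:m1epsilonpart} to reduce the exponent estimate to $L\le\beta:=\varepsilon/(3m+\varepsilon)$ in your notation, and settle that with the Lambert-$W$ threshold; your observation that $H_0:=\tfrac12 e^{-w/\beta}$ with $w=W_{-1}(-2^{-\beta}\beta)$ satisfies $H_0^{\beta}=\log(2H_0)$ is exactly the calibration behind the second entry of \eqref{eq:condHm1Big}, and is if anything more transparent than the paper's wording. (Small omission: the monotonicity argument also needs $H_0\ge\beta^{-1/\beta}$, which is true because $W_{-1}\le -1$.)

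The genuine gap is in your final step, the upgrade from $H^{\beta}\ge\log H+\log 2$ to $H^{\beta}\ge\log H+R_1$. You attribute it to the factor $e^{-A(R_1+1)}$ in $c_1$, arguing that $H\ge c_1^{-3/\varepsilon}$ forces $\log H\ge\frac{3A}{\varepsilon}(R_1+1)$ and that the resulting exponential growth of $H^{\beta}$ ``easily swallows'' $R_1$. But $A=\log\bigl(2\cdot 3^m e^{1.03883m}Q^mM^m\bigr)/\log f(m,M,Q,\alpha)$ is not bounded away from $0$ under the hypotheses: letting $\alpha\to 0$ makes $\log f\to\infty$, hence $A\to 0$, while \eqref{eq:m1epsilonpart} stays satisfied, and one can simultaneously make $R_1$ large and positive by taking $M\to 1^{+}$ (the term $-\log(M-1)$). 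In that regime $\frac{3A}{\varepsilon}(R_1+1)$ is negligible and $e^{3A\beta(R_1+1)/\varepsilon}$ is barely above $1$, so ``exponential beats linear'' gives nothing at the stated threshold. What actually carries the day --- and what the paper's terse claim that \eqref{eq:condHm1Big} implies $H\ge e^{R_1}$ rests on --- is the other factor of $c_1$, namely $C'=\frac{M-1}{6^m(m+1)e^{2.07766m}Q^{2m}M^{2m+1}}$: every term of $R_1$ that can be large positive ($-\log(M-1)$, $m\log Q$, $(m+1)\log M$, $\log(m+1)$, \dots) occurs in $\log(1/C')$ with at least as large a coefficient, while the remaining terms of $R_1$ are harmless ($\log\alpha<0$, and $-\log\log f$ is bounded by a constant times $\log(m+2)$ thanks to \eqref{eq:m1epsilonpart}); hence $\log H\ge\frac3\varepsilon\log(1/c_1)$ yields $\log H\ge R_1$, i.e.\ $\log H+R_1\le 2\log H$, which is the paper's route. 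So your last step should be rebuilt around the comparison of $\log(1/C')$ with $R_1$, not around $e^{-A(R_1+1)}$; and note that even then the inequality still to be checked is $2\log H\le H^{\beta}$ rather than the $\log(2H)\le H^{\beta}$ that the stated threshold delivers, a bridging detail that the paper's own write-up also passes over and that deserves an explicit (slightly adjusted) threshold computation.
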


\begin{remark}
The assumption $\varepsilon \le 3$ is not necessary in Corollary \ref{corollary:m1first}; it is included only to relax the assumptions for the number $H$.
\end{remark}

\begin{remark}
\label{rmk:analysis1}
We note that the conditions of \Cref{corollary:m1first} can be satisfied:

First of all, if $\alpha$ is small enough, the terms $\varepsilon$ and $m$ do not change and the term $Q$ is bounded. Then the expression $1/f(m,M,Q,\alpha)$ is essentially of the form $c \alpha$, where $c$ is a constant. Since we also have
\begin{equation*}
    \lim_{x \to 0} \frac{(m+1)\log(x)}{\log(c x)}=(m+1)
\end{equation*}
for all constants $c>0$, condition \eqref{eq:m1epsilonpart} is satisfied when $\alpha$ is small enough.

After finding an $\alpha$ small enough, we also know the terms $\varepsilon, m,M$ and $Q$. Hence we can find a number $H$ large enough, and all of the conditions in \Cref{corollary:m1first} are satisfied. 

This is illustrated in \Cref{example:pm1}.
\end{remark}

\Cref{thm:mainresult2} and \Cref{corollary:m1first} are quite long and may be a little bit difficult to interpret, so we also mention the following corollary giving a shorter but not that sharp result:

\begin{corollary}
\label{corollary:main2Simp}
Let $m$, $\lambda_0, \lambda_1, \ldots, \lambda_m$, $\alpha_1, \ldots, \alpha_m$, $M$, and $\alpha$ be defined as in \Cref{thm:mainresult2}. Further, assume that condition \eqref{eq:condW1} holds,
\begin{equation}
\label{eq:condlogf}
\log f(m,M,Q, \alpha) \ge 1
\end{equation}
and
\begin{equation*}\label{eq:condH1}
    \log H \ge \max\left\{1.03883m, (m+1)\log M, \log(m+2), m\log Q\right\}.
\end{equation*}

Then 
$$
| \Lambda_p |_p > \frac{M-1}{6^{m}(m+1)e^{2.07766m}Q^{2m}M^{2m+1}}H^{-1-14\log 2-14m\log(3e^{1.03883}QM)}.
$$
\end{corollary}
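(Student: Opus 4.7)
The plan is to derive this corollary directly from Theorem \ref{thm:mainresult2} by simplifying both the constant $c_1$ and the exponent $\omega_1$. First, $\log f \ge 1$ from \eqref{eq:condlogf} yields $f>1$, which is hypothesis \eqref{eq:assumpless1}; combined with \eqref{eq:condW1} and the remaining setup inherited ``as in Theorem \ref{thm:mainresult2}'', every requirement of Theorem \ref{thm:mainresult2} is met, so I would begin by invoking it to obtain $|\Lambda_p|_p > c_1(m,M,Q,\alpha)H^{-\omega_1(m,M,Q,\alpha,H)}$.

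Next I would collapse the right-hand side into a single exponential. Setting $B = \frac{(m+1)\log\alpha}{\log f}+1$ and $L = \frac{\log(\log H + R_1)}{\log H}$, so that $\omega_1 - 1 = -B(1+L)$, a direct manipulation gives
$$c_1 H^{-\omega_1} = \frac{M-1}{6^m(m+1)e^{2.07766m}Q^{2m}M^{2m+1}} \cdot H^{-1} \cdot \exp\bigl(B\bigl[R_1 + 1 + \log H + \log(\log H + R_1)\bigr]\bigr).$$
Plugging the explicit formula for $f$ into $B$, a brief calculation (using $(m+1)|\log\alpha| = \log f + \log 2 + m\log(3e^{1.03883}QM)$) yields
$$B = -\frac{\log 2 + m\log(3e^{1.03883}QM)}{\log f},$$
so in particular $B < 0$, and assumption \eqref{eq:condlogf} immediately produces the clean bound $|B| \le \log 2 + m\log(3e^{1.03883}QM) = K/14$, where $K := 14\log 2 + 14m\log(3e^{1.03883}QM)$ is precisely the exponent appearing in the corollary.

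The corollary therefore reduces to
$$|B|\bigl[R_1 + 1 + \log H + \log(\log H + R_1)\bigr] \le K\log H,$$
since this makes the exponential factor at least $H^{-K}$. Using $|B| \le K/14$, it is enough to show $R_1 + 1 + \log(\log H + R_1) \le 13\log H$. Under the hypothesis $\log H \ge \max\{1.03883m,\,(m+1)\log M,\,\log(m+2),\,m\log Q\}$, each positive summand of $R_1$---namely $1.03883m$, $m\log Q$, $(m+1)\log M$, $\log(m+1)$, $\log(m+2)$, and $(m-1)\log 2 < 1.03883m$---is bounded by $\log H$, while $\log\alpha$ and $-\log\log f$ are non-positive under the standing assumptions, yielding $R_1 \le 6\log H - \log(M-1)$. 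The main obstacle I foresee is the $-\log(M-1)$ term, which is unbounded as $M \to 1^+$ and is not directly controlled by the $\log H$ hypothesis; I would handle it by exploiting the strong lower bound $(m+1)|\log\alpha| \ge 1 + \log 2 + m\log(3e^{1.03883}QM)$ forced by \eqref{eq:condlogf}, so that the large negative $\log\alpha$ already appearing in $R_1$ absorbs the troublesome contribution, possibly after grouping $(m+1)\log M - \log(M-1) = \log\bigl(M^{m+1}/(M-1)\bigr)$. The residual estimate on $\log(\log H + R_1)$ is then elementary given the $\log H$ lower bound, and combining everything delivers the claimed inequality.
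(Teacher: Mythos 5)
Your unwinding of $c_1H^{-\omega_1}$ is correct and is, in substance, the same computation as the paper's: the paper stays inside the proof of \Cref{thm:mainresult2}, bounds $t=\log H+R_1(m,M,Q,\alpha)-1$ by a multiple of $\log H$, concludes $k<14\log H+1$, and feeds that into \eqref{eq:estW2}; your identity $B=-\bigl(\log 2+m\log(3e^{1.03883}QM)\bigr)/\log f$ and the reduction to $R_1+1+\log(\log H+R_1)\le 13\log H$ reproduce exactly that bookkeeping. The genuine gap is your proposed treatment of the $-\log(M-1)$ term. The hypotheses do not force $\alpha\le M-1$, so the ``large negative $\log\alpha$'' cannot absorb it: the lower bound $(m+1)|\log\alpha|\ge 1+\log 2+m\log(3e^{1.03883}QM)$ coming from \eqref{eq:condlogf} only gives $\alpha\lesssim (QM)^{-m/(m+1)}$, while $M-1$ can be as small as $1/Q$, which is much smaller. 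Concretely, take $m=1$, $p=11$, $\alpha_1=\frac{2\cdot 11^5}{2\cdot 11^5-1}$, $Q=2\cdot 11^5-1$: then $M-1=1/Q\approx 3.1\cdot 10^{-6}$, $\alpha=11^{-5}\approx 6.2\cdot 10^{-6}>M-1$, yet $f(1,M,Q,\alpha)=\tfrac{11^{10}}{6e^{1.03883}QM}\approx 4.7\cdot 10^{3}$, so \eqref{eq:condlogf} holds. Thus the grouping $\log\alpha-\log(M-1)$ (or $\log(M^{m+1}/(M-1))$ against $\log H\ge(m+1)\log M$) does not by itself make the troublesome contribution nonpositive, and the step as you describe it fails.

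What closes the argument is an integrality observation you never use: for the index attaining $M$ one has $QM=|Q\alpha_i|\in\Z$, and $M>1$ forces $QM\ge Q+1$, i.e.\ $M-1\ge 1/Q$. Hence $-\log(M-1)\le\log Q\le m\log Q\le\log H$, and your budget still closes with room to spare: seven summands of $R_1$ are each at most $\log H$ while $\log\alpha$ and $-\log\log f$ are nonpositive, so $R_1\le 7\log H$, and the required inequality $R_1+1+\log(\log H+R_1)\le 13\log H$ reduces to $1+\log(8\log H)\le 6\log H$, which holds since $\log H\ge 1.03883$. With that one observation inserted, your black-box derivation from \Cref{thm:mainresult2} delivers the corollary with the stated constant and exponent; without it, the key estimate on $R_1$ is unproved. (For comparison, the paper's proof asserts $t\le 7\log H-1$, i.e.\ $R_1\le 6\log H$, at the corresponding point; controlling $-\log(M-1)$ is exactly the delicate part there as well.)
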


\begin{remark}
Using similar reasoning as in \Cref{rmk:analysis1}, it can be seen that the case in \Cref{corollary:main2Simp} is possible.
\end{remark}

\subsection{Case $M<1$}
\label{sec:resultsSmall}

In this section, we describe our results for the case $M<1$. Since a similar type of result as in \Cref{thm:mainresult2}, where we do not have additional conditions for the number $H$ or for the term $f(m,M,Q,\alpha)$, would be quite long in this case, we do not present it. Instead, we formulate analogous results to \Cref{corollary:m1first,corollary:main2Simp}. A result similar to \Cref{thm:mainresult2} can be obtained from the proof of \Cref{thm:mainresult3}, and this is left to the interested reader.

For $M< 1$, we write
\begin{equation*}\label{eq:cMsmall}
c_2(m, M, Q, \alpha) :=
\left(26m(m+1)\left(6Q^2e^{2.07766}\right)^me^{\frac{5\log \left( 2 \cdot 3^m Q^m e^{1.03883m} \right)}{\log f(m, M, Q, \alpha)}}\right)^{-1}
\end{equation*}
and
\begin{equation*}\label{eq:omegaMsmall}
\omega_2(m, M, Q, \alpha, H) := 
1+\frac{\log\log H}{\log H}+\frac{11\log \left( 2 \cdot 3^m Q^m e^{1.03883m} \right)}{\log f(m, M, Q, \alpha)}.
\end{equation*}

The following result is similar to \Cref{corollary:main2Simp}:

\begin{theorem}\label{thm:mainresult3}
Let $m \ge 1$. Let $\lambda_0, \lambda_1, \ldots, \lambda_m \in \Z$ and $\alpha_1, \ldots, \alpha_m \in \Q$ be given numbers such that $\lambda_i \neq 0$ for some $i$ and the numbers $\alpha_i$ are pairwise distinct and non-zero. Assume also that $M < 1$, $\alpha <1$, condition \eqref{eq:condlogf} holds,
\begin{equation}
\label{eq:condH5}
\frac{\log^2 f\left(m,M,Q,\alpha \right)}{2^m e^{1.03883m}Q^m Hm(m+1)(m+2) \alpha} \le \frac{4}{e^2},
\end{equation}
and
\begin{equation}
\label{eq:condH3}
\log H > \max \left\{ \left( \frac{m}{2}+1 \right) \log 2, 0.519415m, \frac{m}{2} \log Q, \log (m+2), e \right\}.
\end{equation}

Then
$$
| \Lambda_p |_p > c_2(m,M,Q,\alpha)H^{-\omega_2(m, M,Q, \alpha, H)}.
$$
\end{theorem}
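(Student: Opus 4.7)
The plan is to mirror the proof strategy of \Cref{thm:mainresult2} but to re-optimise the construction parameter for the regime $M<1$. The starting point is the collection of Padé approximants of the second kind from \Cref{sec:Pade}, which at a suitable evaluation point produce $m+1$ simultaneous linear forms
\[
L_n^{(k)} = A_{n,0}^{(k)} + \sum_{i=1}^{m} A_{n,i}^{(k)} \boldsymbol{\log}(1+\alpha_i), \qquad k=0,1,\ldots,m,
\]
where $n\in\N$ is a free parameter to be fixed later. After multiplying by a common denominator $D_n$ (encoding the $Q^{\bullet}$ and $\lcm(1,\ldots,n+1)$ factors), the coefficients $D_n A_{n,i}^{(k)}$ become integers, and by the determinant non-vanishing from \Cref{sec:det} some index $k$ can be chosen, for any given $n$, so that the combination of $\Lambda_p$ with $L_n^{(k)}$ yields a nonzero rational number $N_n^{(k)}$.

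The contradiction argument from \Cref{sec:Contradiction} then compares two estimates of $N_n^{(k)}$. On one side, its $p$-adic absolute value is bounded by the maximum of $|\Lambda_p|_p$ times the $p$-adic sizes of $\lambda_i A_{n,i}^{(k)}$ and the $p$-adic size of $L_n^{(k)}$ itself, which is of order $\alpha^{n(m+1)}$ thanks to the vanishing order of the Padé remainder. On the other side, since $N_n^{(k)}$ is a nonzero rational with controlled denominator, it satisfies a reverse product-formula inequality $|N_n^{(k)}|_p \geq |N_n^{(k)}|^{-1}\cdot(\text{denominator factor})^{-1}$. Plugging in the Archimedean estimates from \Cref{sec:estimates}---which in the regime $M<1$ no longer carry the $M^{n(m+1)}$ blow-up present when $M>1$---one obtains an inequality of the form
\[
|\Lambda_p|_p \;\geq\; \frac{(\text{constant in } m,M,Q,\alpha)\cdot f(m,M,Q,\alpha)^{-n}}{H\cdot(\text{polynomial in } n)},
\]
which becomes the stated bound after $n$ is fixed as a function of $H$.

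The main obstacle is the optimisation of $n$ together with the verification of the hypotheses. In the $M<1$ regime, eliminating the $M^{n(m+1)}$ factor alters the balance between $f(m,M,Q,\alpha)^{-n}$ and $H$ that was calibrated in the proof of \Cref{thm:mainresult2}, forcing a different Lambert-$W$-type condition on the feasible range of $n$; this is precisely the reason the hypothesis \eqref{eq:condH5} carries a $\log^2 f$ in place of the $\log f$ appearing in \eqref{eq:condW1}. Once $n$ is chosen near-optimally, of the shape $n\sim \log H/\log f(m,M,Q,\alpha)$ up to a logarithmic correction, the remaining hypotheses \eqref{eq:condlogf} and \eqref{eq:condH3} guarantee that $n\geq 1$ and that the factorial and polynomial-in-$n$ factors absorb cleanly into the constant $c_2(m,M,Q,\alpha)$, while the gap between this near-optimal choice and a genuine integer optimum is what produces the $\log\log H/\log H$ correction term in $\omega_2$.
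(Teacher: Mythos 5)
Your plan is structurally the paper's own: Padé approximants of the second kind from \Cref{sec:Pade}, the non-vanishing of $T(k,\mu)$ via the determinant of \Cref{lemma:det2}, the product-formula contradiction of \Cref{lemma:upperbound1} and \Cref{lemma:contr3}, and a choice of the free parameter of size roughly $\log H/\log f(m,M,Q,\alpha)$ controlled through $W_{-1}$. Your reading of why \eqref{eq:condH5} involves $\log^2 f$ is also correct: in the $M<1$ regime the prefactor is quadratic in $k$, so the contradiction rests on the inequality $x^2e^{-x}\le y$ of \Cref{lemma:W1Second}, whose domain requirement $y\le 4/e^2$ is exactly \eqref{eq:condH5}.

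The final optimisation, however, is mis-accounted in two ways that would prevent your sketch from yielding the stated $c_2$ and $\omega_2$. First, the Archimedean side in the $M<1$ case grows with base $B:=2\cdot 3^mQ^me^{1.03883m}$, not with base $f(m,M,Q,\alpha)$: by \eqref{eq:maxBMle1} and \Cref{lemma:dEst}, the integer $T(k,\mu)$ satisfies $|T(k,\mu)|\le \mathrm{const}\cdot H\,k\,B^{k}$ (see \eqref{eq:estTkmu}), so the non-contradiction branch gives $|\Lambda_p|_p\ge \bigl(\mathrm{const}\cdot H\,k\,B^{k}\bigr)^{-1}$ rather than your displayed $\mathrm{const}\cdot f^{-n}/(H\cdot\mathrm{poly}(n))$. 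Since $f^{-1}=B\,M^m\alpha^{m+1}$, the two bases differ by the factor $M^m\alpha^{m+1}$, and with the admissible choice $k\approx 11\log H/\log f$ (the paper's \eqref{eq:boundforKsmall}) your version would give an exponent of $H$ of size roughly $1+11$, independent of $\alpha$, instead of the stated $1+\tfrac{\log\log H}{\log H}+\tfrac{11\log B}{\log f}$; the whole point of tracking $B^k$ against the $p$-adic decay $\alpha^{(m+1)k}$ separately is that $\log B/\log f$ can be small, which is what makes $\omega_2$ sharp enough to feed into \Cref{thm:mainresult4}. Second, the polynomial-in-$k$ factor cannot be ``absorbed cleanly into the constant $c_2$'': with $k\asymp\log H$ the linear prefactor $k$ contributes $\log H=H^{\log\log H/\log H}$, and this is precisely the origin of the $\tfrac{\log\log H}{\log H}$ term in $\omega_2$. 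The integer rounding of the near-optimal parameter, to which you attribute that term, only costs the bounded factor $f^{\log B/\log f}=B$, which is what actually goes into $c_2$. So the skeleton is right, but you need to carry the base $B$ and the prefactor $k$ through the estimate as in \eqref{eq:estTkmu}--\eqref{eq:boundforKsmall}, rather than collapsing them into $f^{-n}$ and a constant.
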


\begin{remark}
Using same kind of reasoning as in the proofs of \Cref{corollary:m1first} and \Cref{thm:mainresult4} and noting that the case is possible as in \Cref{rmk:analysis1}, we see that the previous result is of size $H^{-11m-1-\varepsilon}$ when $H$ is large and $\alpha$ small enough. The extra coefficient $11$ comes from simplification of the results. As we shall see in the next theorem, it can be removed.

\end{remark}

Next we formulate the result in the form where the exponent of $H$ is of the best possible form $m+1+\varepsilon$. For that, we denote
\begin{align*}
        R_2(m,Q,\alpha) \coloneqq \;&\left( \frac{m}{2}+1 \right) \log 2 + 0.519415m + \frac{m}{2} \log Q \\
&+ \frac{1}{2} \left( \log m + \log (m+1) + \log (m+2) + \log \alpha \right).
\end{align*}

\begin{theorem}
\label{thm:mainresult4}
Let $\varepsilon \in (0,3]$ be a real number. Assume that the numbers $m$, $\lambda_i$ ($i=0,1,\ldots, m$), $\alpha_j$ ($j=0,1,\ldots, m$), $M, Q, \alpha$, and $H$ satisfy the conditions given in \Cref{thm:mainresult3}, including condition \eqref{eq:condH5}. In addition, suppose that
\begin{equation}
\label{eq:flogmeps}
\frac{\log(M^m\alpha^{m+1})}{\log f(m, M, Q, \alpha)} \ge -m-1-\frac{\varepsilon}{3},
\end{equation}
\begin{equation}
\label{eq:assumpHepsilon}
\begin{aligned}
    H \ge \;& \left(2^{m+4+2R_2(m,Q,\alpha)}\cdot9^{R_2(m,Q,\alpha)+1}\left(e^{1.03883}Q\right)^{(2R_2(m,Q,\alpha)+3)m} m(m+1)\right)^{3/\varepsilon},
\end{aligned}
\end{equation}
and
\begin{equation}
\label{eq:assumpHepsilonSecond}
\begin{aligned}
    H \ge e^{-\frac{6(m+1+\varepsilon/3)}{\varepsilon}W_{-1}\left(-\frac{\varepsilon}{6(m+1+\varepsilon/3)}\right)}.
\end{aligned}
\end{equation}

Then
$$
|\Lambda_p|_p > H^{-m-1-\varepsilon}.
$$
\end{theorem}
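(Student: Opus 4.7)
The plan is to mirror the derivation of \Cref{corollary:m1first} from \Cref{thm:mainresult2}, but in the $M<1$ setting of \Cref{thm:mainresult3}. I will combine a sharper form of \Cref{thm:mainresult3} (the one that motivates the remark about removing the factor $11$) with three separate $\varepsilon$-absorptions: one by \eqref{eq:flogmeps} to control the leading exponent, one by \eqref{eq:assumpHepsilonSecond} to kill the $\log\log H/\log H$ correction, and one by \eqref{eq:assumpHepsilon} to swallow the multiplicative constant into a power of $H$.

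The first and most delicate step is to replace the displayed $\omega_2$ in \Cref{thm:mainresult3} by a sharper exponent. Revisiting the proof of \Cref{thm:mainresult3} (which relies on \Cref{sec:Pade,sec:det,sec:estimates,sec:Contradiction}), I would balance the Padé-degree parameter $N$ against $\log H$ just before the coarse estimate $-\log(M^m\alpha^{m+1}) \le 11\log(2\cdot 3^m Q^m e^{1.03883m})$ is applied. Keeping the quantity $\log(M^m\alpha^{m+1})$ visible yields an intermediate bound of the form
\begin{equation*}
|\Lambda_p|_p \;>\; C_0(m,M,Q,\alpha)\, H^{-\widetilde\omega}, \qquad \widetilde\omega \;=\; -\frac{\log(M^m\alpha^{m+1})}{\log f(m,M,Q,\alpha)}\left(1+\tfrac{\log\log H}{\log H}\right) + r,
\end{equation*}
where $r$ is a small explicit remainder controlled by \eqref{eq:condlogf}, \eqref{eq:condH5}, and \eqref{eq:condH3}. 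This is the $M<1$ analogue of the sharper version of \Cref{thm:mainresult2} whose existence is alluded to in the remark preceding \Cref{thm:mainresult4}.

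With this sharper bound, assumption \eqref{eq:flogmeps} gives $\widetilde\omega \le (m+1+\varepsilon/3)(1+\log\log H/\log H) + r$. Condition \eqref{eq:assumpHepsilonSecond} is precisely the Lambert-$W_{-1}$ inversion of the inequality $\log\log H/\log H \le \varepsilon/(6(m+1+\varepsilon/3))$ (using the identity $e^{-W_{-1}(-\delta)} = -W_{-1}(-\delta)/\delta$), which forces the correction term to contribute at most $\varepsilon/6$. Unfolding the explicit form of $C_0(m,M,Q,\alpha)$ shows that \eqref{eq:assumpHepsilon} is precisely what is needed to guarantee $C_0(m,M,Q,\alpha) \ge H^{-\varepsilon/3}$, since its right-hand side collects the powers of $2$, $9$, $(e^{1.03883}Q)^m$, and $m(m+1)$ appearing in $C_0$. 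Together with $r \le \varepsilon/6$ (ensured by \eqref{eq:condH3} and \eqref{eq:condH5}), the four contributions sum to at most $\varepsilon/3 + \varepsilon/6 + \varepsilon/3 + \varepsilon/6 = \varepsilon$, yielding $|\Lambda_p|_p > H^{-m-1-\varepsilon}$. The main obstacle is the first step: extracting the sharp intermediate bound is not formally stated in the excerpt and requires re-running the Padé machinery while tracking $\log(M^m\alpha^{m+1})$ through every estimate instead of majorising it early; once this bound is in hand the remaining algebra (including the Lambert-$W_{-1}$ inversion) is routine.
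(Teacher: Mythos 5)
Your overall route is indeed the paper's route: \Cref{thm:mainresult4} is not deduced from the statement of \Cref{thm:mainresult3} but by re-running the first part of its proof without the lossy simplification, and then spending $\varepsilon$ in pieces — \eqref{eq:flogmeps} for the main exponent via the identity $\frac{\log\left(2\cdot3^mQ^me^{1.03883m}\right)}{\log f(m,M,Q,\alpha)}+1=-\frac{\log\left(M^m\alpha^{m+1}\right)}{\log f(m,M,Q,\alpha)}$, \eqref{eq:assumpHepsilon} to absorb the multiplicative constant into $H^{\varepsilon/3}$, and \eqref{eq:assumpHepsilonSecond} (exactly the Lambert-$W_{-1}$ inversion you describe) to control the $\log\log H/\log H$ corrections. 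So the plan and the role of each hypothesis are correctly identified.

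The gap is the step you yourself flag, and as stated it is not merely deferred but asserted in a form the method does not deliver. No re-balancing of the Padé parameter is needed, and no estimate of the shape $-\log\left(M^m\alpha^{m+1}\right)\le 11\log\left(2\cdot3^mQ^me^{1.03883m}\right)$ occurs anywhere: the choice of $k$ is still the one forced by \Cref{lemma:contr3}, and the factor $11$ in \Cref{thm:mainresult3} comes solely from majorizing $t$ by $5\log H-1$ using \eqref{eq:condH3}; the sharpening consists only of keeping $t\le\frac12\log H+R_2(m,Q,\alpha)-1$. Doing so yields $k<\frac{\log H+2R_2(m,Q,\alpha)+2\log\log H+1}{\log f(m,M,Q,\alpha)}+1$, and, together with the prefactor $k$ in \eqref{eq:estTkmu}, the exponent of $H$ one actually obtains is $-\frac{\log\left(M^m\alpha^{m+1}\right)}{\log f(m,M,Q,\alpha)}+\frac{\log\left(2\cdot3^mQ^me^{1.03883m}\right)}{\log f(m,M,Q,\alpha)}\cdot\frac{2\log\log H}{\log H}+\frac{\log\log H+\log\log\log H}{\log H}$; the coefficient of $\log\log H/\log H$ is thus about $2\left(-\frac{\log\left(M^m\alpha^{m+1}\right)}{\log f(m,M,Q,\alpha)}\right)$, twice the one in your $\widetilde\omega$, and there is an extra $\log\log\log H$ term. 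Consequently your split ($\varepsilon/6$ for the correction plus a remainder $r\le\varepsilon/6$ ``ensured by \eqref{eq:condH3} and \eqref{eq:condH5}'') does not match the computation — \eqref{eq:condH5} and \eqref{eq:condH3} only guarantee applicability of \Cref{lemma:contr3} and $H>e^e$, they do not make any leftover term small. The accounting that actually closes is: $\varepsilon/3$ from \eqref{eq:flogmeps} for the main term, $\varepsilon/3$ for the whole correction, using the full strength of \eqref{eq:assumpHepsilonSecond} against the coefficient $2\left(m+\varepsilon/3\right)+2$, and $\varepsilon/3$ for the constant, where one also needs $H\ge e^{2R_2(m,Q,\alpha)}$, which must be extracted from \eqref{eq:assumpHepsilon}. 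Until the intermediate bound is derived with these constants, the argument is incomplete.
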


\begin{remark}
Similarly as in the case of \Cref{corollary:m1first}, the assumption $\varepsilon \le 3$ is not necessary.
\end{remark}


\section{Overview of the proofs}
\label{sec:generalidea}

Before going to the actual proofs, let us look at the general idea behind them. The key concepts are Padé approximations for the individual functions and the $p$-adic product formula. This method is familiar from earlier works in this area (see, e.g., \cite{Heimonen1993,Vaananen1988}), but since the general idea is rarely explained, we took the opportunity to do so here.

Our target is to bound $\left|\Lambda_p\right|_p$ from below.
First, we derive Pad\'e approximations for the terms $\boldsymbol{\log}(1+\alpha_j)$; this is done in \Cref{sec:Pade}. For $k, \mu, j \in \Z$, $0 \le \mu \le m$, and $j \in [1,m]$, we get
\begin{equation*}
B_{k,\mu,0}(t) \cdot \boldsymbol{\log}(1+\alpha_jt) - B_{k,\mu,j}(t) = S_{k,\mu,j}(t),
\end{equation*}
where $B_{k,\mu,0}(t)$, $B_{k,\mu,j}(t)$, and $S_{k,\mu,j}(t)$ are certain series such that the numbers $Q^{mk+m}B_{k,\mu,0}(1)$ and $Q^{mk+m}B_{k,\mu,0}(1)$ are integers. The idea is to write the expression $\Lambda_p$ using the previous functions and then estimate these functions. The estimates for the terms $B_{k,\mu,0}(1)$, $B_{k,\mu,j}(1)$, and $S_{k,\mu,j}(1)$ are derived in \Cref{sec:estimates}. The general idea how these estimates are used is described below and done in detail in \Cref{sec:proofs}.

We can write 
\begin{equation*}\label{eq:Wdef}
\begin{aligned}
Q^{mk+m}B_{k,\mu,0}(1)\Lambda_p &=\sum\limits_{j=0}^m \lambda_jQ^{mk+m}B_{k,\mu,j}(1)-\sum\limits_{j=1}^m \lambda_jQ^{mk+m}S_{k,\mu,j}(1) \\
&\coloneqq T(k, \mu)-\sum\limits_{j=1}^m \lambda_jQ^{mk+m}S_{k,\mu,j}(1),
\end{aligned}
\end{equation*}
where we have multiplied by $Q^{mk+m}$ to obtain integers.
According to the previous paragraph, $T(k,\mu) \in \Z$, and in \Cref{sec:det} we show that for every $k$ there exists a $\mu$ such that $T(k,\mu) \ne 0$. We would like to find a lower bound for the $p$-adic absolute value of the term $Q^{mk+m}B_{k,\mu,0}(1)\Lambda_p$, and continue by showing that $|T(k,\mu)|_p = |Q^{mk+m}B_{k,\mu,0}(1) \Lambda|_p$. Then it is sufficient to estimate the term $T(k,\mu)$.


We make the assumption that
$$
|B_{k,\mu,0}(1) \Lambda_p|_p \le | \lambda_1 S_{k,\mu,1}(1) + \ldots + \lambda_m S_{k,\mu,m}(1) |_p.
$$ 
This implies $|T(k,\mu)|_p \le | \lambda_1 S_{k,\mu,1}(1) + \ldots + \lambda_m S_{k,\mu,m}(1) |_p$. Then, using the product formula and the estimates for the terms $B_{k,\mu,j}(1)$ and $S_{k,\mu,j}(1)$ proved in \Cref{sec:estimates}, we get
\begin{align*}
1 = \;&|T(k,\mu)| \prod_{q \in \mathbb{P}} |T(k,\mu)|_q \\
\le \;&|T(k,\mu)||T(k,\mu)|_p \\
\le \;&Q^{mk+m}|\lambda_0 B_{k,\mu,0}(1) + \ldots + \lambda_m B_{k,\mu,m}(1)|\cdot |\lambda_1 S_{k,\mu,1}(1) + \ldots + \lambda_m S_{k,\mu,m}(1)|_p \\
\le \;&Q^{mk+m}\left( \sum_{i=0}^m |\lambda_i| \right) \max_{0 \le j \le m} \{ |B_{k,\mu,j}(1)| \} \cdot  \max_{1 \le j \le m} \{|S_{k,\mu,j}(1)|_p\}. \\
\le \;&\ldots \\
< \;&1
\end{align*}
when $k$ is large enough, which is a contradiction. Therefore we must have 
$$
|B_{k,\mu,0}(1) \Lambda_p|_p > | \lambda_1 S_{k,\mu,1}(1) + \ldots + \lambda_m S_{k,\mu,m}(1) |_p
$$ 
for big $k$, from which it follows that $|T(k,\mu)|_p = |Q^{mk+m}B_{k,\mu,0}(1) \Lambda|_p$. So we get
$$
1 \le |T(k,\mu)||T(k,\mu)|_p = |T(k,\mu)| |Q^{mk+m}B_{k,\mu,0}(1) \Lambda_p|_p \le |T(k,\mu)||\Lambda_p|_p.
$$
This gives a lower bound for $|\Lambda_p|_p$ when we just bound the term $|T(k,\mu)|$.

A detailed proof for the previous contradiction is done in \Cref{sec:Contradiction}, and the proofs for the main results can be found in \Cref{sec:proofs}.

\section{Pad\'e approximations}
\label{sec:Pade}

First, we mention a useful lemma which is needed later.
Let $m, k \in \Z_{\ge 1}$. For $i=0,\ldots,mk$, we define
$$
\sigma_i = \sigma_i \left(k, \overline{\alpha} \right) := (-1)^i \sum_{i_1 + \ldots + i_m = i} \binom{k}{i_1} \cdots \binom{k}{i_m} \cdot \alpha_1^{k-i_1} \cdots \alpha_m^{k-i_m}.
$$

\begin{lemma}
\label{sigmalemma}
\cite[Lemma 4.1, estimate (15)]{Seppala2020}
For $\sigma_i$ defined as above, we have
$$
\sum_{i=0}^{mk} |\sigma_i|t^i \le \prod_{j=1}^m \left(|\alpha_j|+t\right)^k.
$$
\end{lemma}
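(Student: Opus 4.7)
The plan is to reduce the inequality to the binomial theorem applied factor by factor, after pulling absolute values inside through the triangle inequality. The key observation is that all the combinatorial data in the definition of $\sigma_i$ (the binomial coefficients and the powers $|\alpha_j|^{k-i_j}$) are nonnegative, so there is no cancellation to worry about on the right-hand side; only the $(-1)^i$ in front of $\sigma_i$ and possible signs of the $\alpha_j$ matter, and both are removed by taking absolute values.

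First, I would apply the triangle inequality termwise to the definition of $\sigma_i$:
$$|\sigma_i| \le \sum_{i_1 + \cdots + i_m = i} \binom{k}{i_1}\cdots \binom{k}{i_m}\, |\alpha_1|^{k-i_1}\cdots |\alpha_m|^{k-i_m},$$
where the sum runs over tuples $(i_1,\ldots,i_m)$ with $0 \le i_j \le k$ (binomial coefficients vanish outside this range, so we may as well impose it explicitly).

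Next, I would multiply by $t^i$ and sum over $i = 0, 1, \ldots, mk$. Using $t^i = t^{i_1}\cdots t^{i_m}$ on the diagonal $i_1 + \cdots + i_m = i$, and noting that letting $i$ range over $[0,mk]$ while $(i_1,\ldots,i_m)$ ranges over tuples with $i_1+\cdots+i_m = i$ and each $i_j \in [0,k]$ is exactly the same as letting each $i_j$ range independently over $[0,k]$, the double sum decouples:
$$\sum_{i=0}^{mk} |\sigma_i|\, t^i \le \sum_{i_1=0}^{k}\cdots \sum_{i_m=0}^{k}\, \prod_{j=1}^m \binom{k}{i_j} |\alpha_j|^{k-i_j}\, t^{i_j} = \prod_{j=1}^{m} \sum_{i_j=0}^{k} \binom{k}{i_j} |\alpha_j|^{k-i_j}\, t^{i_j}.$$

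Finally, the binomial theorem applied to each factor gives $\sum_{i_j=0}^{k} \binom{k}{i_j} |\alpha_j|^{k-i_j} t^{i_j} = (|\alpha_j| + t)^k$, and taking the product over $j$ yields the stated bound. I do not expect any real obstacle here: the argument is essentially combinatorial bookkeeping, and the only point requiring attention is the reindexing between the constrained sum and the $m$-fold product of unconstrained sums, which is a straightforward bijection because each $i_j$ must lie in $[0,k]$ for the binomial coefficient to be nonzero.
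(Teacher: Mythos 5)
Your argument is correct: the triangle inequality, the decoupling of the constrained sum over $i_1+\cdots+i_m=i$ into an $m$-fold product of unconstrained sums, and the binomial theorem give exactly the stated bound (note only that $t\ge 0$ is implicitly needed, which holds in every application here, where $t$ is $2$ or $2M$). The paper itself does not prove this lemma but cites \cite[Lemma 4.1, estimate (15)]{Seppala2020}, where the estimate is obtained in essentially the same way, namely by observing that the $\sigma_i$ are the coefficients of $\prod_{j=1}^m(\alpha_j-z)^k$ and are therefore dominated in absolute value by the corresponding coefficients of $\prod_{j=1}^m(|\alpha_j|+z)^k$; so your proof matches the intended one.
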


Now we derive the Pad\'e approximations. The following proof is similar to that of \cite[Theorem 2.2]{Matala-aho2011}.

\begin{lemma}
Let $m, k \in \Z_{\ge 1}$ and $\mu \in \{0,1,\ldots,m\}$. Set
$$
A_{k,\mu,0}(z) := (-1)^{mk} \sum_{i=0}^{mk} \binom{i+k+\mu}{k} \sigma_i\left(k, \overline{\alpha}\right) z^{mk-i}.
$$
Then there exist polynomials $A_{k,\mu,j}(z)$ and remainders $R_{k,\mu,j}(z)$, where $j=1,\ldots,m$, defined as 
\begin{equation}
\label{def:Aj}
    A_{k,\mu,j}(z) \coloneqq \sum_{N=0}^{mk+\mu-1} \sum_{i=mk-\min\{N,mk\}}^{mk} (-1)^{mk} \binom{i+k+\mu}{k} \frac{\sigma_{i}\left(k, \overline{\alpha}\right) \alpha_j^{N-mk+i}}{N-mk+i+1} z^N
\end{equation}
and
\begin{equation}
\label{def:Rj}
    R_{k,\mu,j}(z) \coloneqq \sum_{N=mk+k+\mu}^\infty \sum_{i=0}^{mk} (-1)^{mk} \binom{i+k+\mu}{k} \frac{\sigma_{i}\left(k, \overline{\alpha}\right) \alpha_j^{N-mk+i}}{N-mk+i+1} z^N,
\end{equation}
for which we have
\begin{equation}\label{padeeq1}
A_{k,\mu,0}(z) \cdot \frac{\boldsymbol{\log}(1-\alpha_jz)}{-\alpha_jz} - A_{k,\mu,j}(z) = R_{k,\mu,j}(z)
\end{equation}
with
\begin{equation}\label{degreecond}
\begin{cases}
\deg A_{k,\mu,0}(z) = mk; \\
\deg A_{k,\mu,j}(z) \le mk + \mu -1; \\
\underset{t=0}{\ord} \, R_{k,\mu,j}(z) \ge mk+k+\mu.
\end{cases}
\end{equation}
\end{lemma}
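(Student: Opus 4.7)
The plan is to verify the Pad\'e identity \eqref{padeeq1} by expanding the product $A_{k,\mu,0}(z)\cdot\frac{\boldsymbol{\log}(1-\alpha_j z)}{-\alpha_j z}$ as a formal power series and comparing coefficients of $z^N$. Using the expansion
$$\frac{\boldsymbol{\log}(1-\alpha_j z)}{-\alpha_j z}=\sum_{n=0}^{\infty}\frac{(\alpha_j z)^n}{n+1},$$
a direct computation yields the coefficient of $z^N$ in the product as
$$c_N \;=\; (-1)^{mk}\sum_{i=\max\{0,\,mk-N\}}^{mk}\binom{i+k+\mu}{k}\frac{\sigma_i(k,\overline{\alpha})\,\alpha_j^{N-mk+i}}{N-mk+i+1}.$$
The definitions \eqref{def:Aj} and \eqref{def:Rj} collect precisely the terms $c_N z^N$ for $N\le mk+\mu-1$ and for $N\ge mk+k+\mu$, respectively. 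Therefore the Pad\'e identity \eqref{padeeq1} together with the degree and order bounds in \eqref{degreecond} reduces to the single assertion that $c_N=0$ throughout the middle range $mk+\mu\le N\le mk+k+\mu-1$.

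To prove this vanishing we plan to combine three ingredients: the generating-function identity $\sum_i \sigma_i t^i=\prod_{l=1}^{m}(\alpha_l-t)^k$, which is the multinomial expansion already underlying \Cref{sigmalemma}; the derivative identity $\binom{i+k+\mu}{k}\,y^{i+\mu}=\frac{1}{k!}\frac{d^k}{dy^k}y^{i+k+\mu}$; and the integral representation $1/(n+i+1)=\int_0^1 x^{n+i}\,dx$. Writing $N=mk+n$ with $\mu\le n\le k+\mu-1$, substituting $y=\alpha_j x$, and swapping the finite sum with the integral and with the $k$-th derivative, one can rewrite
$$c_N \;=\; \frac{(-1)^{mk}\alpha_j^{\,n}}{k!}\int_0^1 x^{\,n-\mu}\,\frac{d^k}{dx^k}Q(x)\,dx,\qquad Q(x):=x^{k+\mu}\prod_{l=1}^{m}(\alpha_l-\alpha_j x)^{k}.$$

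The main obstacle is now to show that this integral vanishes, and we will handle it by repeated integration by parts. The polynomial $Q$ has a zero of order $k+\mu$ at $x=0$, and because the factor $(\alpha_j-\alpha_j x)^k=\alpha_j^k(1-x)^k$ appears in the product, also a zero of order at least $k$ at $x=1$. Since $n-\mu\le k-1$, we can transfer all $k$ derivatives from $Q$ onto the monomial $x^{n-\mu}$ through $n-\mu+1$ integrations by parts; at every step the boundary term involves a derivative $Q^{(r)}$ with $r<k$, which vanishes at both endpoints, and the final step reduces the integral to $\bigl[Q^{(k-n+\mu-1)}(x)\bigr]_{x=0}^{x=1}=0$. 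Hence $c_N=0$. Once this is in place, the remaining items in \eqref{degreecond} follow by inspection: the equality $\deg A_{k,\mu,0}=mk$ holds because the leading coefficient $(-1)^{mk}\binom{k+\mu}{k}\sigma_0=(-1)^{mk}\binom{k+\mu}{k}\prod_l\alpha_l^k$ is nonzero (using $\alpha_l\ne 0$), while $\deg A_{k,\mu,j}\le mk+\mu-1$ and $\ord_{t=0}R_{k,\mu,j}\ge mk+k+\mu$ are immediate from the summation ranges in \eqref{def:Aj} and \eqref{def:Rj}.
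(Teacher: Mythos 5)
Your proposal is correct, and its overall skeleton matches the paper's: expand $A_{k,\mu,0}(z)\cdot\frac{\boldsymbol{\log}(1-\alpha_jz)}{-\alpha_jz}$ as a power series, identify the coefficients $c_N$ (the paper's $r_{N,j}$), observe that \eqref{def:Aj} and \eqref{def:Rj} are exactly the low- and high-order parts of that series, and reduce everything to the vanishing of $c_N$ in the middle range $mk+\mu\le N\le mk+k+\mu-1$. Where you genuinely differ is in how that vanishing is proved. The paper notes that for such $N$ the factor $\binom{i+k+\mu}{k}\frac{1}{N-mk+i+1}$ is a polynomial in $i$ of degree $k-1$ (the denominator cancels against one of the $k$ consecutive factors in the numerator) and then invokes the known orthogonality property of the $\sigma_i$, namely $\sum_{i=0}^{mk}P(i)\alpha_j^i\sigma_i=0$ for $\deg P\le k-1$, citing Matala-aho and Sepp\"al\"a. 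You instead re-derive this fact in the cases needed, via the generating identity $\sum_i\sigma_i t^i=\prod_l(\alpha_l-t)^k$, the Rodrigues-type formula $\binom{i+k+\mu}{k}x^{i+\mu}=\frac{1}{k!}\frac{d^k}{dx^k}x^{i+k+\mu}$, and $\frac{1}{n+i+1}=\int_0^1x^{n+i}\,dx$, which packages $c_N$ as $\frac{(-1)^{mk}\alpha_j^{\,n}}{k!}\int_0^1x^{\,n-\mu}Q^{(k)}(x)\,dx$ with $Q(x)=x^{k+\mu}\prod_l(\alpha_l-\alpha_jx)^k$; integration by parts then kills the integral because $Q$ vanishes to order $\ge k$ at both endpoints and $n-\mu\le k-1$ (your phrase ``transfer all $k$ derivatives'' is a slight overstatement --- only $n-\mu+1$ derivatives are consumed --- but the final evaluation $\bigl[Q^{(k-n+\mu-1)}\bigr]_0^1=0$ is exactly right, since $0\le k-n+\mu-1\le k-1$). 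The trade-off: your route is self-contained and makes the Legendre-type mechanism behind the Pad\'e approximation explicit, at the cost of importing real integrals for what is a purely algebraic identity (harmless here, since the $\alpha_i$ are rational and $c_N$ is a polynomial identity in them); the paper's route is shorter because it outsources the orthogonality of the $\sigma_i$ to a cited lemma. Your closing remarks on \eqref{degreecond}, including the nonvanishing of the leading coefficient of $A_{k,\mu,0}$ via $\sigma_0=\prod_l\alpha_l^k\ne0$, are fine under the paper's standing assumption that the $\alpha_i$ are nonzero.
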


\begin{proof}
Let us prove the claim by considering the first term on the left-hand side of \eqref{padeeq1}. We have
$$
\frac{\boldsymbol{\log}(1-\alpha_jz)}{-\alpha_jz} = \sum_{n=0}^\infty \frac{(\alpha_jz)^n}{n+1}.
$$
Let us write $A_{k,\mu,0}(z) = (-1)^{mk} \sum_{h=0}^{mk} \binom{mk-h+k+\mu}{k} \sigma_{mk-h}\left(k, \overline{\alpha}\right) z^h$. Then
$$
A_{k,\mu,0}(z) \cdot \frac{\boldsymbol{\log}(1-\alpha_jz)}{-\alpha_jz} = \sum_{N=0}^\infty r_{N,j} z^N,
$$
where
\begin{align*}
r_{N,j} &= \sum_{n+h=N} (-1)^{mk} \binom{mk-h+k+\mu}{k} \frac{\sigma_{mk-h}\left(k, \overline{\alpha}\right) \alpha_j^n}{n+1}\\
&= \sum_{h=0}^{\min \{N,mk\}} (-1)^{mk} \binom{mk-h+k+\mu}{k} \frac{\sigma_{mk-h}\left(k, \overline{\alpha}\right) \alpha_j^{N-h}}{N-h+1}\\
&= \sum_{i=mk-\min\{N,mk\}}^{mk} (-1)^{mk} \binom{i+k+\mu}{k} \frac{\sigma_{i}\left(k, \overline{\alpha}\right) \alpha_j^{N-mk+i}}{N-mk+i+1}. \\
\end{align*}
These give expressions \eqref{def:Aj} and \eqref{def:Rj}.

To fulfil \eqref{degreecond}, we need to show that $r_{N,j}=0$ when $mk+\mu \le N \le mk+\mu+k-1$. So let $N=mk+\mu+a$, where $0 \le a \le k-1$. Then the expression
\begin{align*}
\binom{i+k+\mu}{k} \cdot \frac{1}{N+i-mk+1} &= \frac{(i+\mu+k)!}{k!(i+\mu)!} \cdot \frac{1}{i+\mu+a+1} \\
&= \frac{(i+\mu+k)(i+\mu+k-1) \cdots (i+\mu+1)}{k!(i+\mu+a+1)} \\
&=: P(i)
\end{align*}
is a polynomial in $i$ of degree $k-1$. From the properties of the coefficients $\sigma_i$ it follows that $\sum_{i=0}^{mk} P(i) \cdot \alpha_j^i \cdot \sigma_{i}\left(k, \overline{\alpha}\right) = 0$ (see \cite[Lemma 2.1]{Matala-aho2011} or \cite[Lemma 4.1]{Seppala2020}). Now we have $r_{N,j} =0$ 
when $mk+\mu \le N \le mk+\mu+k-1$,
and the proof is done.
\end{proof}

Let us finally multiply both sides of equation \eqref{padeeq1} by $d_{mk+\mu}$ to ensure that the numerator polynomials $A_{k,\mu,j}(z)$ have integer coefficients too. Denote
\begin{equation}
\label{def:B0}
B_{k,\mu,0}(t) := d_{mk+\mu} A_{k,\mu,0}(-t),
\end{equation}
\begin{equation}
\label{def:Bj}
B_{k,\mu,j}(t) := d_{mk+\mu} \alpha_jt A_{k,\mu,j}(-t),
\end{equation}
and
\begin{equation}\label{eq:remainder}
S_{k,\mu,j}(t) := d_{mk+\mu} \alpha_jt R_{k,\mu,j}(-t).
\end{equation}
Then our approximation functions become
\begin{equation*}\label{padeeq2}
B_{k,\mu,0}(t) \cdot \boldsymbol{\log}(1+\alpha_jt) - B_{k,\mu,j}(t) = S_{k,\mu,j}(t).
\end{equation*}
\section{Existence of the term $T(k,\mu) \ne 0$}
\label{sec:det}

In this section we establish the important detail that for each positive integer $k$ there is a number $\mu \in [0,m]$ such that $T(k,\mu) \ne 0$.

Recall that the function $T(k,\mu)$ is a linear combination of the terms $B_{k,0,j}(1)$, where $j=0,1,\ldots, m$. The idea is that if a determinant of a matrix whose $(j+1)$th row consists of the terms $B_{k,j,0}(1), B_{k,j,1}(1), \ldots, B_{k,j,m}(1)$ is non-zero, then all of the rows cannot be zero.

First we compute the determinant. From the work of Rhin and Toffin \cite{Rhin1986}, we get the following lemma:

\begin{lemma}\label{lemma:det} \cite[Proposition, p.\ 290]{Rhin1986}
Let
$$
\Delta (t) :=
\begin{vmatrix}
B_{k,0,0}(t) & B_{k,0,1}(t) & \cdots & B_{k,0,m}(t) \\
B_{k,1,0}(t) & B_{k,1,1}(t) & \cdots & B_{k,1,m}(t) \\
\vdots & \vdots & \ddots & \vdots \\
B_{k,m,0}(t) & B_{k,m,1}(t) & \cdots & B_{k,m,m}(t) \\
\end{vmatrix}.
$$
Then
$$
\Delta (t) = \pm \frac{(k!)^m}{(mk+m)!} \left( t^{\frac{m(m+1)}{2}} \alpha_1 \alpha_2 \cdots \alpha_m \prod_{1 \le i < j \le m} (\alpha_i - \alpha_j) \right)^{2k+1}.
$$
\end{lemma}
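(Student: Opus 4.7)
The strategy I would follow is a standard one for determinants arising from Padé-type simultaneous approximations: use the approximation identities to ``swap'' the non-zeroth columns of $\Delta(t)$ for columns of remainders that vanish to high order at the origin, balance this against the known degree bounds to pin down $\Delta(t)$ as a single monomial in $t$, and then identify the constant from the symmetry structure in $\alpha_1,\dots,\alpha_m$.

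First I would work in the formal power series ring $\mathbb Q[\alpha_1,\dots,\alpha_m][[t]]$ and perform elementary column operations: for each $j\in\{1,\dots,m\}$, I replace $\mathrm{col}_j$ by $\boldsymbol{\log}(1+\alpha_j t)\cdot\mathrm{col}_0 - \mathrm{col}_j$. Since adding a power-series multiple of one column to another preserves the determinant, the identity
$$
B_{k,\mu,0}(t)\boldsymbol{\log}(1+\alpha_j t) - B_{k,\mu,j}(t) = S_{k,\mu,j}(t)
$$
shows that $(-1)^m\Delta(t)$ equals the determinant of the matrix whose $0$-th column is the column of $B_{k,\mu,0}(t)$ and whose $j$-th column ($j\ge 1$) is the column of $S_{k,\mu,j}(t)$. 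Expanding this Leibniz-wise and letting $\mu^\ast$ denote the row assigned to column $0$, each term has order at $t=0$ at least $\sum_{\mu\ne\mu^\ast}(mk+k+\mu+1)\ge \tfrac{m(m+1)(2k+1)}{2}$, using $\mathrm{ord}_{t=0}S_{k,\mu,j}\ge mk+k+\mu+1$ that comes from \eqref{eq:remainder} together with \eqref{degreecond}. Conversely, applying Leibniz directly to the original $\Delta(t)$ and using $\deg B_{k,\mu,0}=mk$ and $\deg B_{k,\mu,j}\le mk+\mu$ yields $\deg\Delta(t)\le \tfrac{m(m+1)(2k+1)}{2}$. Since the order lower bound and the degree upper bound coincide, $\Delta(t)=c\cdot t^{m(m+1)(2k+1)/2}$ for a scalar $c$ independent of $t$.

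The last step is to identify $c$, and this is the step I expect to require the most care. Two symmetry observations pin down its $\alpha$-dependence: swapping $\alpha_i$ with $\alpha_{i'}$ interchanges two columns of the matrix, so $c$ is antisymmetric in $\alpha_1,\dots,\alpha_m$ and hence divisible by the Vandermonde $\prod_{i<j}(\alpha_i-\alpha_j)$; moreover column $j$ carries an explicit $\alpha_j$ factor from \eqref{def:Bj}, so $c$ is divisible by $\alpha_1\cdots\alpha_m$. A refined $\alpha_j$-degree count, using \Cref{sigmalemma} together with the cancellation between $\alpha_j^{N-mk+i}$ and $\deg_{\alpha_j}\sigma_i\le\min(k,mk-i)$, gives $\deg_{\alpha_j}B_{k,\mu,j}\le mk+\mu$ and $\deg_{\alpha_j}B_{k,\mu,j'}\le k$ for $j'\ne j$, hence $\deg_{\alpha_j}c\le m(2k+1)$, which matches the claimed exponent. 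Therefore $c=C(k,m)\cdot\bigl(\alpha_1\cdots\alpha_m\prod_{i<j}(\alpha_i-\alpha_j)\bigr)^{2k+1}$ for a purely numerical constant $C(k,m)$. Pinning down $C(k,m)=\pm (k!)^m/(mk+m)!$ is the main obstacle: one clean route is to compare the coefficient of a single monomial of both sides --- the leading coefficient of $B_{k,\mu,0}$ is $d_{mk+\mu}\sigma_0 = d_{mk+\mu}(\alpha_1\cdots\alpha_m)^k$ by the computation of $\sigma_0$, and the corresponding leading coefficients in columns $j\ge 1$ are tractable via \eqref{def:Aj}. Carefully tracking the binomials $\binom{i+k+\mu}{k}$ and the denominators $N-mk+i+1$ produces $C(k,m)$ after a Vandermonde-type simplification; if this bookkeeping proves too onerous, one may alternatively invoke the explicit calculation of Rhin and Toffin \cite{Rhin1986} directly, whose determinant has the same structural form as $\Delta(t)$.
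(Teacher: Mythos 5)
The first half of your argument is fine and is the standard opening for this kind of Pad\'e determinant (note that the paper itself offers no proof at all here -- it simply cites Rhin--Toffin -- so what you are really attempting is to reconstruct their computation): the column operations are determinant-preserving up to the harmless sign $(-1)^m$, the bound $\operatorname{ord}_{t=0}S_{k,\mu,j}\ge mk+k+\mu+1$ from \eqref{eq:remainder} and \eqref{degreecond} gives $\operatorname{ord}_{t=0}\Delta\ge m(m+1)(2k+1)/2$, the degree count from \eqref{degreecond} and \eqref{def:Bj} gives the matching upper bound, and hence $\Delta(t)=c\,t^{m(m+1)(2k+1)/2}$ with $c$ independent of $t$.

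The genuine gap is in the identification of $c$, which is the actual content of the lemma -- in particular its non-vanishing, which is exactly what \Cref{lemma:det2} needs. Antisymmetry under $\alpha_i\leftrightarrow\alpha_{i'}$ only yields divisibility by a \emph{single} factor $\prod_{i<j}(\alpha_i-\alpha_j)$, the explicit factor in \eqref{def:Bj} only yields a single factor $\alpha_1\cdots\alpha_m$, and the degree bound $\deg_{\alpha_j}c\le m(2k+1)$ is consistent with, but does not force, the $(2k+1)$-st power structure: an antisymmetric polynomial divisible by $\alpha_1\cdots\alpha_m\prod_{i<j}(\alpha_i-\alpha_j)$ of the right partial degrees could just as well be that product times a nontrivial symmetric polynomial of the complementary degree, so the step ``therefore $c=C(k,m)\bigl(\alpha_1\cdots\alpha_m\prod_{i<j}(\alpha_i-\alpha_j)\bigr)^{2k+1}$'' is a non sequitur. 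To close it you would either have to prove vanishing of $\Delta$ to order $2k+1$ along $\alpha_i=\alpha_j$ and along $\alpha_j=0$ (this needs its own argument, e.g.\ differentiating the approximation identity or a rank count on the Pad\'e conditions), or bypass symmetry and compute $c$ directly as the coefficient of $t^{m(m+1)(2k+1)/2}$ in your swapped determinant: only permutations assigning row $\mu=m$ to column $0$ contribute, so $c$ equals $B_{k,m,0}(0)$ times an explicit $m\times m$ determinant formed from the lowest-order coefficients of the $S_{k,\mu,j}$, and evaluating that determinant is precisely the Rhin--Toffin computation you propose to fall back on. As written, your argument does not even establish $c\neq 0$, and invoking \cite{Rhin1986} for the constant leaves you with nothing beyond the citation the paper already gives. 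One further caution if you do carry out the coefficient bookkeeping with the $B$'s: by \eqref{def:B0}--\eqref{def:Bj} every entry in row $\mu$ carries the factor $d_{mk+\mu}$, so your computation will produce an overall factor $\prod_{\mu=0}^{m}d_{mk+\mu}$ that must be reconciled with the normalization used in the stated formula.
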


Using the previous formula for the determinant, the needed result follows: 
\begin{lemma}
\label{lemma:det2}
Let us assume that the numbers $\alpha_i$ are non-zero and pairwise distinct. For every $k$, there is $\mu \in \{0,1,\ldots,m\}$ such that 
$$
\lambda_0 B_{k,\mu,0}(1) + \lambda_1 B_{k,\mu,1}(1) + \ldots + \lambda_m B_{k,\mu,m}(1) \neq 0.
$$
Indeed, for every positive integer $k$ there must be a number $\mu$ such that $T(k,\mu) \ne 0$.
\end{lemma}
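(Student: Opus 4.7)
The plan is to reduce the statement directly to Lemma~\ref{lemma:det}. First I would set $t=1$ in Lemma~\ref{lemma:det} to obtain
\[
\Delta(1) = \pm \frac{(k!)^m}{(mk+m)!}\left(\alpha_1\alpha_2\cdots\alpha_m \prod_{1\le i<j\le m}(\alpha_i-\alpha_j)\right)^{2k+1}.
\]
Under the standing hypotheses that the $\alpha_i$ are non-zero and pairwise distinct, every factor on the right is non-zero, so $\Delta(1)\ne 0$. Consequently the $(m+1)\times(m+1)$ matrix $\mathcal{B}:=\bigl(B_{k,\mu,j}(1)\bigr)_{0\le \mu,j\le m}$ is invertible.

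Next I would interpret $T(k,\mu)$ as a linear functional of the rows of $\mathcal{B}$. By definition
\[
T(k,\mu) = Q^{mk+m}\sum_{j=0}^m \lambda_j B_{k,\mu,j}(1),
\]
so if one writes the column vector $\boldsymbol{\lambda}:=(\lambda_0,\lambda_1,\ldots,\lambda_m)^\top$, then the vector $\bigl(T(k,0),T(k,1),\ldots,T(k,m)\bigr)^\top$ equals $Q^{mk+m}\,\mathcal{B}\,\boldsymbol{\lambda}$. Since at least one $\lambda_i\ne 0$ we have $\boldsymbol{\lambda}\ne \mathbf{0}$, and since $\mathcal{B}$ is invertible this forces $\mathcal{B}\boldsymbol{\lambda}\ne \mathbf{0}$. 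Hence at least one coordinate $T(k,\mu)$ is non-zero, which is exactly the claim.

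There is essentially no obstacle here beyond citing Lemma~\ref{lemma:det} correctly; the only small point to mention is that the integrality factor $Q^{mk+m}\ne 0$ so $T(k,\mu)\ne 0$ is equivalent to $\sum_j \lambda_j B_{k,\mu,j}(1)\ne 0$, and that the non-vanishing of $\Delta(1)$ truly follows from the hypotheses on the $\alpha_i$ (the binomial/factorial prefactor is always strictly positive). No further computation is needed.
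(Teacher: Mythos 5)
Your argument is correct and is essentially the same as the paper's: both invoke Lemma \ref{lemma:det} at $t=1$ to conclude the matrix $\bigl(B_{k,\mu,j}(1)\bigr)$ is invertible, then use the non-vanishing of the coefficient vector $(\lambda_0,\ldots,\lambda_m)$ to force some linear combination, hence some $T(k,\mu)$, to be non-zero. Your explicit remarks about the factor $Q^{mk+m}\neq 0$ and the non-vanishing of the prefactor only make explicit what the paper leaves implicit.
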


\begin{proof}
We see from Lemma \ref{lemma:det} that $\Delta (t)$ is non-zero for a non-zero $t$ when the numbers $\alpha_j$ are non-zero and pairwise different. In particular, the matrix defining $\Delta (1)$ is invertible, and since the coefficient vector $(\lambda_0, \lambda_1, \ldots, \lambda_m)$ is not the zero vector, one must have $\lambda_0 B_{k,\mu,0}(1) + \lambda_1 B_{k,\mu,1}(1) + \ldots + \lambda_m B_{k,\mu,m}(1) \neq 0$ for some $\mu \in \{0,1,\ldots,m\}$. By the definition of the term $T(k,\mu)$, this also implies that for every number $k$, there is a number $\mu$ such that $T(k,\mu) \ne 0$
\end{proof}

\section{Estimates for the polynomials and the remainders}
\label{sec:estimates}
\allowdisplaybreaks

To prove a lower bound for the linear form in logarithms $\Lambda_p$ in Section \ref{sec:proofs}, we need to have 
$$
|B_{k,\mu,0}(1) \Lambda_p|_p > | \lambda_1 S_{k,\mu,1}(1) + \ldots + \lambda_m S_{k,\mu,m}(1) |_p
$$
for $k$ large enough. This is shown via contradiction, and for that, we need upper bounds for the terms $B_{k,\mu,0}(1)$, $B_{k,\mu,j}(1)$, and $S_{k,\mu,j}(1)$, where $j=1,\ldots,m$. They are derived below.


\begin{lemma}
\label{lemma:B0}
Let $\alpha_1,\alpha_2,\ldots, \alpha_m$ be real numbers and let $\mu \in [0,m]$, $j \in [1,m]$, and $k>0$ be integers. Then 
$$
|B_{k,\mu,0}(1)| \le 2^{k+m-1}\cdot3^{mk}d_{mk+m}\cdot
\begin{cases}
1 & \text{ if } M< 1; \\
M^{mk} & \text{ if } M> 1.
\end{cases}
$$
\end{lemma}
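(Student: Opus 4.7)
The plan is to bound $B_{k,\mu,0}(1)$ by going back to its definition \eqref{def:B0} together with the explicit formula for $A_{k,\mu,0}(z)$. Plugging in and taking absolute values inside the sum gives
\[
|B_{k,\mu,0}(1)| \le d_{mk+\mu}\sum_{i=0}^{mk}\binom{i+k+\mu}{k}|\sigma_i(k,\overline{\alpha})|,
\]
so the task reduces to estimating this sum. Since $d_{mk+\mu}\le d_{mk+m}$ and $\mu\le m$, the extra $2^{m-1}$ factor in the claim has to come from the binomial coefficient, and the $3^{mk}$ factor together with the case distinction in $M$ has to come from combining the binomial bound with \Cref{sigmalemma}.

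My first step would be the inequality $\binom{n}{k}\le 2^{n-1}$, valid for every $n\ge 1$ and $0\le k\le n$ (which applies here since $i+k+\mu\ge k\ge 1$). Applied to our sum it yields
\[
\sum_{i=0}^{mk}\binom{i+k+\mu}{k}|\sigma_i| \;\le\; 2^{k+\mu-1}\sum_{i=0}^{mk}2^{i}|\sigma_i(k,\overline{\alpha})|.
\]
The right-hand inner sum is exactly what \Cref{sigmalemma} estimates when specialised to $t=2$, giving the bound $\prod_{j=1}^m(|\alpha_j|+2)^k$.

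To finish, I would split into the two cases in the statement. When $M<1$, each factor satisfies $|\alpha_j|+2<3$, so the product is at most $3^{mk}$. When $M>1$, the elementary inequality $M+2\le 3M$ holds, giving $\prod_j(|\alpha_j|+2)^k\le (3M)^{mk}=3^{mk}M^{mk}$. Combining with $d_{mk+\mu}\le d_{mk+m}$ and $2^{k+\mu-1}\le 2^{k+m-1}$ produces the two claimed bounds uniformly in $\mu\in[0,m]$.

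I expect no serious obstacle: the entire argument is mechanical once the correct pairing is chosen, namely the binomial bound $\binom{n}{k}\le 2^{n-1}$ paired with \Cref{sigmalemma} at $t=2$. The only point that deserves a moment's care is checking that the bound $\binom{n}{k}\le 2^{n-1}$ is valid in the edge cases $k=0$ or $k=n$ (it is, as soon as $n\ge 1$), and that the inequality $M+2\le 3M$ really requires $M\ge 1$, which matches the case split in the statement.
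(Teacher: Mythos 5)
Your proposal is correct and follows essentially the same route as the paper: bound $\binom{i+k+\mu}{k}$ by $2^{i+k+\mu-1}$, apply \Cref{sigmalemma} with $t=2$, and split the cases $M<1$ (using $|\alpha_j|+2<3$) and $M>1$ (using $M+2\le 3M$), finishing with $\mu\le m$. The only difference is cosmetic: you invoke the standard inequality $\binom{n}{k}\le 2^{n-1}$ for $n\ge 1$ directly, whereas the paper justifies the same bound by a slightly more laboured argument distinguishing whether $i+k+\mu=2k$.
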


\begin{proof}
According to to definition \eqref{def:B0} of the term $B_{k,\mu,0}(1)$, we have
\begin{align*}
\left|B_{k,\mu,0}(1)\right|&=d_{mk+\mu} \left|A_{k,\mu,0}(-1)\right| \le d_{mk+\mu}\sum_{i=0}^{mk} \binom{i+k+\mu}{k} \left|\sigma_i\left(k, \overline{\alpha}\right)\right|.
\end{align*}
Let us first estimate the term $\binom{i+k+\mu}{k}$. Keeping in mind that $\sum_{h=0}^n \binom{n}{h}=2^n$ and $\binom{n}{h}=\binom{n}{n-h}$, we can write
$$
\binom{i+k+\mu}{k} \le 2^{i+k+\mu-1}
$$
if $i+k+\mu \ne 2k$, since the sum $\sum_{h=0}^n \binom{n}{h}$ can be divided into two parts and the term $\binom{i+k+\mu}{k}$ is in the part which is 
at most as large as the other one. Further, since $\binom{2k}{k} \le 2\binom{2k}{k-1}$, we can estimate
$$
2\binom{i+k+\mu}{k} = 2^{i+k+\mu}-\sum_{\substack{h=0 \\ h\ne k-1, k, k+1}}^{i+k+\mu} \binom{i+k+\mu}{h} \le 2^{i+k+\mu},
$$
and thus the wanted estimate holds also when $i+k+\mu = 2k$.

We have obtained
$$
\left|B_{k,\mu,0}(1)\right| \le 2^{k+\mu-1}d_{mk+\mu}\sum_{i=0}^{mk} 2^i\left|\sigma_i\left(k, \overline{\alpha}\right)\right|.
$$
By \Cref{sigmalemma}, the right-hand side is
$$
\le 2^{k+\mu-1}d_{mk+\mu}\prod_{i=1}^{m} \left(|\alpha_i|+2\right)^k \le 2^{k+\mu-1}d_{mk+\mu}\prod_{i=1}^{m} \left(M+2\right)^k .
$$
The claim follows from the estimates $M+2< 3$ for $M < 1$ and $M+2 \le 3M$ for $M>1$, and the fact $\mu \le m$.
\end{proof}

We are ready with the term $B_{k,\mu,0}(1)$ and move on to estimate the term $B_{k,\mu,j}(1)$ for $j \in \{1,\ldots,m\}$.

\begin{lemma}\label{lemBj}
Assume $k \ge 1$, $\mu \in [0,m]$, and $j \in [1,m]$ are integers. We have
$$
|B_{k,\mu,j}(1)| \le 2^{k+m-1}\cdot 3^{mk} d_{mk+m}\cdot
 \frac{M^{mk+m+1}-M}{M-1},
$$
keeping in mind that $M \ne 1$.
\end{lemma}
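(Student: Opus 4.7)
The plan is to parallel the proof of Lemma \ref{lemma:B0}, the only new ingredient being the geometric-type factor $\alpha_j^{N-mk+i}/(N-mk+i+1)$ that appears in the polynomial $A_{k,\mu,j}$ for $j\ge 1$.

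Starting from $B_{k,\mu,j}(1)=d_{mk+\mu}\alpha_j A_{k,\mu,j}(-1)$ and the explicit expression \eqref{def:Aj}, I would reindex the double sum via $n=N-mk+i$: for each fixed $i\in\{0,\ldots,mk\}$ the variable $N$ runs over $[mk-i,mk+\mu-1]$, so $n$ runs over $[0,i+\mu-1]$. After the triangle inequality and the estimates $|\alpha_j|\le M$ and $\frac{1}{n+1}\le 1$, this gives
$$
|A_{k,\mu,j}(-1)|\le \sum_{i=0}^{mk}\binom{i+k+\mu}{k}|\sigma_i|\sum_{n=0}^{i+\mu-1} M^n.
$$
The inner geometric sum is monotone in its length (all summands are non-negative), so uniformly in $i$ one has $\sum_{n=0}^{i+\mu-1}M^n\le\sum_{n=0}^{mk+m-1}M^n=(M^{mk+m}-1)/(M-1)$. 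Multiplying by the extra $|\alpha_j|\le M$ from the definition of $B_{k,\mu,j}(1)$ produces the factor $(M^{mk+m+1}-M)/(M-1)$ appearing in the statement.

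For the outer sum $\sum_i\binom{i+k+\mu}{k}|\sigma_i|$ the plan is to repeat verbatim the argument used in Lemma \ref{lemma:B0}: bound $\binom{i+k+\mu}{k}\le 2^{i+k+\mu-1}$ (with the extra care at the peak index $i+k+\mu=2k$), apply Lemma \ref{sigmalemma} with $t=2$ to obtain $\sum_i 2^i|\sigma_i|\le(M+2)^{mk}\le 3^{mk}$, and finally use $\mu\le m$ together with $d_{mk+\mu}\le d_{mk+m}$ to absorb the remaining indices. The one delicate point to watch is the uniformity of the monotonicity step across the regimes $M<1$ and $M>1$: one should recognise $(M^{mk+m+1}-M)/(M-1)=M+M^2+\cdots+M^{mk+m}$ as a non-negative quantity irrespective of the sign of $M-1$, so that the same estimate is valid in both settings; after that observation the rest is routine triangle-inequality bookkeeping identical in structure to the proof of Lemma \ref{lemma:B0}.
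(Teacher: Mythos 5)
Your reindexing $n=N-mk+i$ and the bookkeeping with $1/(n+1)\le 1$, $|\alpha_j|\le M$, $d_{mk+\mu}\le d_{mk+m}$ are fine, and for $M<1$ your argument does give the stated bound. The gap is in the case $M>1$, which the lemma must cover (it is exactly the case used in estimate \eqref{eq:estmaxB}): the step ``$\sum_i 2^i|\sigma_i|\le (M+2)^{mk}\le 3^{mk}$'' is false when $M>1$, since then $M+2>3$ and the best one can say is $(M+2)^{mk}\le (3M)^{mk}=3^{mk}M^{mk}$. Feeding that into your decoupled estimate yields
$$
|B_{k,\mu,j}(1)|\le 2^{k+m-1}3^{mk}d_{mk+m}\,M^{mk}\cdot\frac{M^{mk+m+1}-M}{M-1},
$$
which overshoots the claimed bound by the factor $M^{mk}$, and this loss is not cosmetic: it comes from bounding the inner geometric sum $\sum_{n=0}^{i+\mu-1}M^n$ uniformly in $i$, which divorces the power $M^i$ from $\sigma_i$ before the sigma-sum is estimated.

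The paper's proof avoids this precisely by keeping that coupling: it writes $|\alpha_j|^{N-mk+i}\le M^{N-mk}M^i$, groups $2^iM^i=(2M)^i$ with $|\sigma_i|$, and applies \Cref{sigmalemma} with $t=2M$, so that $\prod_{l=1}^m(|\alpha_l|+2M)^k\le(3M)^{mk}=3^{mk}M^{mk}$; the factor $M^{mk}$ is then absorbed as $M^{N-mk}\cdot M^{mk}=M^N$, and $\sum_{N=0}^{mk+\mu-1}M^N=(M^{mk+\mu}-1)/(M-1)$ gives the stated bound after multiplying by the prefactor $M$ and using $\mu\le m$ (your observation that $(M^{n+1}-M)/(M-1)=M+\cdots+M^{n}$ is nonnegative and monotone in $n$ for either sign of $M-1$ is the right way to justify that last step). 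To repair your version you would have to retain the $i$-dependence of the inner sum, e.g.\ bound it by $(M^{i+\mu}-1)/(M-1)$ and only then apply \Cref{sigmalemma} with $t=2M$ — which is essentially the paper's argument — rather than pass to the uniform bound and use $t=2$.
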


\begin{proof}
This proof is quite similar to the proof of \Cref{lemma:B0}: First we use the definition of the term $B_{k,\mu,j}(1)$, then estimate a certain binomial coefficient and the term $\sigma_i$ to establish the claim.

By definition \eqref{def:Bj} of the term $B_{k,\mu,j}(1)$, we have
\begin{equation}\label{eq:B0First}
\begin{split}
&|B_{k,\mu,j}(1)| \\
= \;&\left|\alpha_jd_{mk+\mu} \sum_{N=0}^{mk+\mu-1} r_{N,j} (-1)^N \right| \\
\le \;&\left|\alpha_jd_{mk+\mu}\right| \sum_{N=0}^{mk+\mu-1}  \sum_{i=mk-\min\{N,mk\}}^{mk} \binom{i+k+\mu}{k} \cdot \frac{\left|\sigma_{i}\left(k, \overline{\alpha}\right)\right|\left| \alpha_j^{N-mk+i}\right|}{\left|N-mk+i+1\right|}.
\end{split}
\end{equation}

As was mentioned in the proof of \Cref{lemma:B0}, it holds
$$
\binom{i+k+\mu}{k} \le 2^{i+k+\mu-1}.
$$
Further, since $N-mk+i \ge 0$, the right-hand side of \eqref{eq:B0First} is
\begin{align*}
& \le 2^{k+\mu-1}Md_{mk+\mu} \sum_{N=0}^{mk+\mu-1} M^{N-mk} \sum_{i=0}^{mk} \left|\sigma_{i}\left(k, \overline{\alpha}\right)\right|\left( 2M\right)^i.
\end{align*}
By \Cref{sigmalemma}, this can be further estimated as
\begin{align*}
& \le 2^{k+\mu-1}Md_{mk+\mu} \sum_{N=0}^{mk+\mu-1} M^{N-mk} \prod_{i=0}^{m} \left(|\alpha_i|+2M\right)^k \\
&\le 2^{k+\mu-1}\cdot3^{mk}Md_{mk+\mu} \sum_{N=0}^{mk+\mu-1} M^{N}.
\end{align*}

Recall that $M \ne 1$ since $\alpha<1$. As
$$
 \sum_{N=0}^{mk+\mu-1} M^{N}=\frac{M^{mk+\mu}-1}{M-1} 
$$
and $\mu \le m$, the claim follows.

\end{proof}

\begin{lemma}\label{lemSj}
Assume $k \ge 1$, $\mu \in [0,m]$, and $j \in [1,m]$ are integers.We have
$$
|S_{k,\mu,j}(1)|_p \le \left(mk+k+1\right) \alpha^{mk+k+1}.
$$
\end{lemma}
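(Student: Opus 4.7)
The starting point is the series representation
$$S_{k,\mu,j}(1) = d_{mk+\mu}\alpha_j R_{k,\mu,j}(-1) = \sum_{N \ge mk+k+\mu}\sum_{i=0}^{mk}(-1)^{mk+N}d_{mk+\mu}\binom{i+k+\mu}{k}\frac{\sigma_i(k,\overline{\alpha})\,\alpha_j^{N-mk+i+1}}{N-mk+i+1},$$
obtained from \eqref{eq:remainder} and \eqref{def:Rj}. The $p$-adic convergence (ensured by $\alpha<1$) together with the ultrametric inequality gives
$$|S_{k,\mu,j}(1)|_p \le \max_{\substack{N \ge mk+k+\mu \\ 0 \le i \le mk}} \left|d_{mk+\mu}\binom{i+k+\mu}{k}\frac{\sigma_i(k,\overline{\alpha})\,\alpha_j^{N-mk+i+1}}{N-mk+i+1}\right|_p.$$

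Next, I would estimate each factor: $|d_{mk+\mu}|_p \le 1$ and $|\binom{i+k+\mu}{k}|_p\le 1$ since both are integers; $|\alpha_j^{N-mk+i+1}|_p \le \alpha^{N-mk+i+1}$ directly from $|\alpha_j|_p\le\alpha$; and $|\sigma_i(k,\overline{\alpha})|_p \le \alpha^{mk-i}$, obtained by applying the ultrametric to the defining sum of $\sigma_i$ and noting that each summand $\binom{k}{i_1}\cdots\binom{k}{i_m}\alpha_1^{k-i_1}\cdots\alpha_m^{k-i_m}$ has $p$-adic absolute value at most $\alpha^{mk-i}$. For the denominator, I would use $|N-mk+i+1|_p \ge 1/(N-mk+i+1)$, valid since $|n|_p \ge 1/n$ for any positive integer $n$. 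Multiplying these factor bounds together, each individual term is bounded by $(N-mk+i+1)\,\alpha^{N+1}$.

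Finally, I would maximize over the indices. Over $i \in [0,mk]$ the bound is largest at $i = mk$, giving $(N+1)\alpha^{N+1}$. Over $N \ge mk+k+\mu$, the function $N\mapsto (N+1)\alpha^{N+1}$ is monotonically decreasing past $N \approx -1/\log\alpha - 1$, so the maximum occurs at the endpoint $N = mk+k+\mu$, producing $(mk+k+\mu+1)\alpha^{mk+k+\mu+1}$. Since $\mu \ge 0$ and $\alpha < 1$, one checks that this quantity is bounded above by $(mk+k+1)\alpha^{mk+k+1}$, yielding the stated inequality. The most delicate point I anticipate is justifying that the maximum over $N$ really is attained at $N = mk+k+\mu$ and then absorbing the $\mu$-dependence into the cleaner form $(mk+k+1)\alpha^{mk+k+1}$; for $\alpha$ close to $1$ this may require a brief case analysis or a slightly sharper exploitation of the assumption $\alpha < 1$, whereas for moderately small $\alpha$ the reduction is immediate.
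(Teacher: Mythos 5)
Your proof is correct and follows essentially the same route as the paper: termwise ultrametric bounds with $|\sigma_i(k,\overline{\alpha})|_p \le \alpha^{mk-i}$ and $1/|n|_p \le n$, reduction to maximizing $(N+1)\alpha^{N+1}$ over $i$ and $N$, and evaluation at the endpoint $N = mk+k+\mu$. The only hedge you leave open---the behaviour when $\alpha$ is close to $1$---never arises, since $\alpha$ is the $p$-adic absolute value of a nonzero rational and hence $\alpha \le 1/p \le 1/2$, so $-1/\log\alpha - 1 \le 1/\log 2 - 1 < 1 \le mk+k$, making both the endpoint maximization and the absorption of $\mu \ge 0$ immediate; this is exactly how the paper closes the argument.
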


\begin{proof}
We use representation \eqref{eq:remainder}. First we note that for all integers $i \in [mk-\min\{N, mk\}, mk]$, we have
\begin{align*}
\left| \sigma_i \left(k, \overline{\alpha} \right) \right|_p &= \left| (-1)^i \sum_{i_1 + \ldots + i_m = i} \binom{k}{i_1} \cdots \binom{k}{i_m} \cdot \alpha_1^{k-i_1} \cdots \alpha_m^{k-i_m} \right|_p \\
&\le \max_{i_1 + \ldots + i_m = i} \left| \binom{k}{i_1} \cdots \binom{k}{i_m} \cdot \alpha_1^{k-i_1} \cdots \alpha_m^{k-i_m} \right|_p \\
&\le \alpha^{mk-i}
\end{align*}
and
$$
\frac{1}{|N-mk+i+1|_p} \le  p^{\log_p(N-mk+i+1)} \le N-mk+i+1.
$$

Using the previous estimates,
\begin{align*}
|S_{k,\mu,j}(1)|_p &= \left| d_{mk+\mu}\alpha_j \sum_{N=mk+k+\mu}^\infty r_{N,j}(-1)^N \right|_p \\
&\le |\alpha_j|_p \max_{N \ge mk+k+\mu} |r_{N,j}|_p \\
&= |\alpha_j|_p \max_{N \ge mk+k+\mu} \left| \sum_{i=0}^{mk} (-1)^{mk} \binom{i+k+\mu}{k} \frac{\sigma_{i}\left(k, \overline{\alpha}\right) \alpha_j^{N-mk+i}}{N-mk+i+1} \right|_p \\
&\le |\alpha_j|_p \max_{N \ge mk+k+\mu} \left\{ \max_{0 \le i \le mk} \left| \binom{i+k+\mu}{k} \frac{\sigma_{i}\left(k, \overline{\alpha}\right) \alpha_j^{N-mk+i}}{N-mk+i+1} \right|_p \right\} \\
&\le |\alpha_j|_p \max_{N \ge mk+k+\mu} \left\{ \max_{0 \le i \le mk} \left\{ \left(N-mk+i+1\right)\alpha^{mk-i} \cdot \alpha^{N-mk+i} \right\} \right\} \\
&\le \alpha \max_{N \ge mk+k+\mu} \left\{ (N+1) \alpha^N \right\}.
\end{align*}

The function $(N+1)\alpha^N$ is decreasing for $N \ge-\frac{1}{\log\alpha}-1$. Further, since $\alpha<1$, we also have $\alpha \le \frac{1}{p}$, and hence 
$$
-\frac{1}{\log\alpha}-1 \le \frac{1}{\log p}-1 \le \frac{1}{\log 2}-1<1.
$$
Thus, the maximum value for $(N+1) \alpha^N$ is obtained at $N=mk+k+\mu$. Keeping in ming that $\mu \ge 0$, we get the claim.
\end{proof}

%
%

%


\section{Computing $|B_{k,\mu,0}(1) \Lambda_p|_p$}
\label{sec:Contradiction}

As was described in \Cref{sec:generalidea}, we shall show that $|Q^{mk+m}B_{k,\mu,0}(1) \Lambda|_p=\left|T(k,\mu)\right|_p$. Since we have
\begin{equation*}
T(k, \mu) = Q^{mk+m}B_{k,\mu,0}(1)\Lambda_p + \sum\limits_{j=1}^m \lambda_jQ^{mk+m}S_{k,\mu,j}(1),
\end{equation*}
from the properties of the $p$-adic absolute value it follows that it suffices to show that we have 
$$
|B_{0,k,\mu}(1) \Lambda|_p > | \lambda_1 S_{k,\mu,1}(1) + \ldots + \lambda_m S_{k,\mu,m}(1) |_p.
$$
This is done via contradiction; namely, if
\begin{equation}
\label{eq:BSmaller}
|B_{0,k,\mu}(1) \Lambda|_p \le | \lambda_1 S_{k,\mu,1}(1) + \ldots + \lambda_m S_{k,\mu,m}(1) |_p,
\end{equation}
where $T(k,\mu) \ne 0$, then a certain product has to be greater than one and smaller than one at the same time. Hence inequality \eqref{eq:BSmaller} is impossible. The result is needed in \Cref{sec:proofs} to prove the main theorems.

The proof in this section contains two main steps: First, in \Cref{lemma:upperbound1}, we show that from inequality \eqref{eq:BSmaller}, it follows that a certain product has to be greater than one. Then, in \Cref{lemma:contr2,lemma:contr3}, the contradiction is derived.

On several occasions, estimates for terms of the form $xe^x$ will be needed. A branch $W_{-1}(x)$ of the Lambert $W$ function will be used which is defined for all real numbers $x \in \left[-\frac{1}{e},0\right)$ and for which we have
$$
W_{-1}\left(-\frac{1}{e}\right)=-1 \quad\text{and}\quad \lim_{x \to 0^-}W_{-1}(x)=-\infty.
$$

\subsection{Preliminaries for the term $|B_{k,\mu,0}(1) \Lambda_p|_p$}

Before going to the actual proofs, we shortly mention a lemma that is needed in the coming estimates to deal with the term $d_{mk+\mu} = \lcm (1,2,\ldots,mk+\mu)$.

\begin{lemma}\cite[Theorem 12]{Rosser1962}
\label{lemma:dEst}
Let $n$ be a positive integer and $n \le x$. Then
$$
\log (\lcm (1,2,\ldots,n)) < 1.03883x.
$$
\end{lemma}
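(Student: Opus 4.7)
The plan is to recognize that $\log\lcm(1, 2, \ldots, n)$ is exactly the second Chebyshev function $\psi(n) = \sum_{p^k \le n} \log p$. Indeed, $\lcm(1, \ldots, n) = \prod_{p \le n} p^{\lfloor \log_p n \rfloor}$, and taking logarithms gives $\sum_{p \le n} \lfloor \log_p n \rfloor \log p = \psi(n)$. Since the right-hand side $1.03883 x$ is monotone increasing in $x$ and we are given $n \le x$, it suffices to prove the stronger statement $\psi(n) < 1.03883 n$ for every positive integer $n$.

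To attack such a bound, I would follow the classical Chebyshev strategy via central binomial coefficients. Kummer's formula
\[
v_p\!\binom{2n}{n} \;=\; \sum_{j \ge 1} \left(\left\lfloor \frac{2n}{p^j} \right\rfloor - 2 \left\lfloor \frac{n}{p^j} \right\rfloor\right),
\]
in which each summand is either $0$ or $1$, combined with the elementary estimate $\binom{2n}{n} < 4^n$, yields by induction on $n$ an upper bound for the first Chebyshev function $\theta(n) = \sum_{p \le n} \log p$ of the shape $\theta(n) \le (2\log 2)\,n \approx 1.386\, n$. This bound on $\theta$ transfers to $\psi$ via the decomposition $\psi(n) = \theta(n) + \theta(n^{1/2}) + \theta(n^{1/3}) + \cdots$, whose tail is negligible compared to the main term.

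The hard part is sharpening the constant from the Chebyshev value $2\log 2 \approx 1.386$ down to the declared $1.03883$. This requires either refined elementary inputs (alternating combinations of binomial coefficients, in the spirit of Nair or the Hanson--Erd\H{o}s method) combined with direct numerical verification for small $n$, or analytic estimates derived from an explicit zero-free region of the Riemann zeta function together with explicit values of the first several hundred prime powers. Producing the specific constant $1.03883$ with a fully rigorous explicit remainder is precisely the content of \cite[Theorem 12]{Rosser1962}; reproducing that multi-page computation would add no mathematical content here, so the cleanest route, and the one I would take, is to invoke the Rosser bound directly.
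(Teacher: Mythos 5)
Your proposal is correct and ends up exactly where the paper does: the paper gives no proof of its own but simply cites Rosser--Schoenfeld \cite[Theorem 12]{Rosser1962}, and your reduction (identifying $\log\lcm(1,\ldots,n)$ with the Chebyshev function $\psi(n)$ and using $n\le x$) together with the final appeal to that theorem is the same route. The Chebyshev-style sketch you include is accurate context but not needed beyond the citation.
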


Now we are ready to prove the first step where we apply the fact that for each positive integer $k$ there exists a number $\mu$ such that $T(k,\mu) \ne 0$ (\Cref{lemma:det2}).

\begin{lemma}
\label{lemma:upperbound1}
Let $\lambda_1,\lambda_2, \ldots, \lambda_m$ be integers and $\alpha_1,\alpha_2,\ldots, \alpha_m$ non-zero and pairwise distinct rational numbers with $\alpha<1$. Let $k \ge 1$ be an integer and $\mu \in [0,m]$ be an integer such that $T(k,\mu) \ne 0$. Assume 
$$
|B_{0,k,\mu}(1) \Lambda|_p \le | \lambda_1 S_{k,\mu,1}(1) + \ldots + \lambda_m S_{k,\mu,m}(1) |_p.
$$  
Then
\begin{equation}\label{eq:1les}
\begin{split}
1 < &\; e^{1.03883(mk+m)}Q^{mk+m}H(m+1) 2^{k+m-1} 3^{mk} \cdot \frac{M^{mk+m+1}-M}{M-1}\\
& \cdot \left(mk+k+1\right) \alpha^{mk+k+1} =: \Omega_1
\end{split}
\end{equation}
when $M>1$, and
\begin{equation}
\label{eq:omega2}
\begin{split}
1 < \;&e^{1.03883(mk+m)}Q^{mk+m}H(m+1) 2^{k+m-1} 3^{mk}(mk+m) \\
&\cdot \left(mk+k+1\right) \alpha^{mk+k+1} =: \Omega_2
\end{split}
\end{equation}
when $M < 1$.
\end{lemma}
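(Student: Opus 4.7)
The plan is to combine the product formula on $\Q$ with the standing hypothesis and the size estimates of \Cref{sec:estimates}. Since $T(k,\mu)$ is a nonzero integer by \Cref{lemma:det2}, we have $|T(k,\mu)|_q \le 1$ for every prime $q$, so the product formula $|T(k,\mu)|\prod_q |T(k,\mu)|_q = 1$ collapses to $|T(k,\mu)| \cdot |T(k,\mu)|_p \ge 1$. The strategy is then to bound each of these two factors separately and multiply the bounds to read off $\Omega_1 > 1$ and $\Omega_2 > 1$.

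For the $p$-adic factor I would start from
$$
T(k,\mu) = Q^{mk+m}B_{k,\mu,0}(1)\Lambda_p + \sum_{j=1}^m \lambda_j Q^{mk+m} S_{k,\mu,j}(1),
$$
apply the ultrametric inequality, and use the hypothesis to see that the first summand is dominated by the second in $p$-adic size. Together with $|Q|_p \le 1$ and $|\lambda_j|_p \le 1$ (since $\lambda_j \in \Z$), this yields
$$
|T(k,\mu)|_p \le \max_{1 \le j \le m}|S_{k,\mu,j}(1)|_p \le (mk+k+1)\alpha^{mk+k+1}
$$
by \Cref{lemSj}.

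For the archimedean factor, the triangle inequality combined with $|\lambda_j| \le H$ gives
$$
|T(k,\mu)| \le (m+1)HQ^{mk+m}\max_{0\le j\le m}|B_{k,\mu,j}(1)|,
$$
which I would bound using \Cref{lemma:B0}, \Cref{lemBj} and the estimate $d_{mk+m} \le e^{1.03883(mk+m)}$ of \Cref{lemma:dEst}. In the case $M>1$, one checks $(M^{mk+m+1}-M)/(M-1) \ge M^{mk}$, so \Cref{lemBj} furnishes the dominant factor $(M^{mk+m+1}-M)/(M-1)$; multiplying by the $p$-adic bound reproduces $\Omega_1$. In the case $M<1$, the same factor simplifies via $(M-M^{mk+m+1})/(1-M) = M(1+M+\cdots+M^{mk+m-1}) \le mk+m$, producing $\Omega_2$.

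No step poses a substantive obstacle; the argument is essentially careful bookkeeping. The one item to watch is tracking the strictness inherited from the binomial estimate $\binom{i+k+\mu}{k} \le 2^{i+k+\mu-1}$ used inside \Cref{lemma:B0} and \Cref{lemBj}, so that the combined bound delivers the strict inequality $1 < \Omega_i$ stated in the lemma rather than merely $1 \le \Omega_i$.
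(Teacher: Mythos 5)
Your proposal is correct and follows essentially the same route as the paper: the product formula applied to the nonzero integer $T(k,\mu)$, the ultrametric inequality plus the hypothesis to reduce $|T(k,\mu)|_p$ to $\max_j|S_{k,\mu,j}(1)|_p$, and the archimedean bound via \Cref{lemma:B0}, \Cref{lemBj}, and \Cref{lemma:dEst}, with the same case analysis $M>1$ versus $M<1$ (the paper even uses the same comparisons $M^{mk}<\frac{M^{mk+m+1}-M}{M-1}$ and $\max\{1,M(mk+m)\}<mk+m$, the latter together with the strict inequality in \Cref{lemma:dEst} supplying the strictness you flag). No gaps.
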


\begin{proof}
The claim follows by using the product formula. Since $T(k,\mu) \neq 0$, we get, as described in \Cref{sec:generalidea},
\begin{align*}
1 &\le \left( \sum_{i=0}^m |\lambda_i| \right) Q^{mk+m}\max_{0 \le j \le m} \{ |B_{k,\mu,j}(1)| \} \cdot  \max_{1 \le j \le m} \{|S_{k,\mu,j}(1)|_p\}.
\end{align*}
Let us bound the terms on the right-hand side of the previous inequality.

First, by the definition of the number $H$, we have $\sum_{i=0}^m |\lambda_i| \le (m+1)H$. As for the terms $B_{k,\mu,j}(1)$, we combine the information from Lemmas \ref{lemma:B0} and \ref{lemBj}. When $M > 1$, we have
$$
M^{mk} < \frac{M^{mk+m+1}-M}{M-1},
$$
so
\begin{equation}\label{eq:estmaxB}
\max_{0 \le j \le m} \{ |B_{k,\mu,j}(1)| \} \le 2^{k+m-1} 3^{mk} d_{mk+m} \cdot \frac{M^{mk+m+1}-M}{M-1} \quad \text{for } M>1.
\end{equation}
When $M < 1$, it holds
$$
\max\left\{1,\frac{M^{mk+m+1}-M}{M-1}\right\} \le \max\left\{1,M(mk+m)\right\} < mk+m.
$$
Hence, we have
\begin{equation}\label{eq:maxBMle1}
\max_{0 \le j \le m} \{ |B_{k,\mu,j}(1)| \} < 2^{k+m-1} 3^{mk} d_{mk+m}(mk+m) \quad \text{for } M < 1.
\end{equation}

Finally, from Lemma \ref{lemSj}, we get
$$
\max_{1 \le j \le m} \left\{ |S_{k,\mu,j}(1)|_p \right\} \le \left(mk+k+1\right) \alpha^{mk+k+1}.
$$

The claim follows from the previous estimates and \Cref{lemma:dEst}.
\end{proof}

\subsection{Computing $|B_{k,\mu,0}(1) \Lambda_p|_p$ when $M>1$}
Next we prove that inequality \eqref{eq:BSmaller} cannot hold when $M>1$. We need the following small lemma concerning the Lambert $W$ function:
\begin{lemma}\label{lemma:W1First}
Let $x<0$ and $y$ be real numbers. If $y \ge -\frac{1}{e}$ and $x < W_{-1}(y)$,
then we have
$
xe^x > y.
$
\end{lemma}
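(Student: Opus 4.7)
The plan is to prove this by a direct monotonicity argument on the real function $f(x)=xe^x$ restricted to the branch $(-\infty,-1]$, which is exactly the range of $W_{-1}$.

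First I would note that for the hypothesis $x<W_{-1}(y)$ to be meaningful the quantity $W_{-1}(y)$ must be defined, which combined with $y\ge -1/e$ forces $y\in[-1/e,0)$. From the two boundary values $W_{-1}(-1/e)=-1$ and $\lim_{y\to 0^-}W_{-1}(y)=-\infty$ recalled just before the lemma, the range of $W_{-1}$ on this interval is contained in $(-\infty,-1]$. In particular $W_{-1}(y)\le -1$, so the assumption $x<W_{-1}(y)$ yields $x<-1$, which is (strictly) compatible with the standing hypothesis $x<0$.

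Next I would study $f(x)=xe^x$ by computing $f'(x)=(1+x)e^x$. On the region $x<-1$ we have $1+x<0$, hence $f'(x)<0$, so $f$ is strictly decreasing on $(-\infty,-1]$. Since both $x$ and $W_{-1}(y)$ lie in this interval and $x<W_{-1}(y)$, strict monotonicity gives
\[
f(x) > f\bigl(W_{-1}(y)\bigr).
\]
By definition of the Lambert $W$ function, $f(W_{-1}(y))=W_{-1}(y)\,e^{W_{-1}(y)}=y$, so the inequality becomes $xe^x>y$, which is exactly the conclusion.

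There is essentially no obstacle in this proof; it is a one-line monotonicity argument once one has correctly identified the branch on which $f$ is decreasing and the range of $W_{-1}$. The only thing to be careful about is that the argument works uniformly down to the boundary case $y=-1/e$, where $W_{-1}(y)=-1$: here the strict inequality $x<-1$ combined with strict monotonicity of $f$ on $(-\infty,-1]$ still gives $f(x)>f(-1)=-1/e=y$, so no special treatment is needed.
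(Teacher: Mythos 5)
Your proof is correct and is essentially the paper's argument: the paper simply invokes that $W_{-1}$ is a decreasing inverse branch of $xe^x$ on $\left[-\frac{1}{e},0\right)$, which is the same monotonicity fact you verify directly via $f'(x)=(1+x)e^x<0$ for $x<-1$. Your version just spells out the details (range of the branch, behaviour at the endpoint $y=-1/e$) that the paper leaves implicit.
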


\begin{proof}
Because the function $W_{-1}(x)$ is an inverse of the function $xe^x$ and is decreasing for $x \in \left[-\frac{1}{e}, 0\right)$, the claim follows.
\end{proof}

We move on to prove the case $M>1$:

\begin{lemma}
\label{lemma:contr2}
Let the numbers $\alpha_j$, $\lambda_j$, $p$, and $M>1$ be given as in \Cref{thm:mainresult2}. Assume that $T(k,\mu) \ne 0$ and that conditions \eqref{eq:assumpless1} and \eqref{eq:condW1} hold. Let $k$ be an integer such that
\begin{equation}
\label{eq:kassump}
\begin{split}
k \ge \;& \max\left\{1, \frac{1}{\log \left( 2\cdot 3^me^{1.03883m} Q^m M^m \alpha^{m+1} \right)} \vphantom{W_{-1}\left(\frac{\log \left( 2\cdot 3^me^{1.03883m}  M^m \alpha^{m+1} \right)}{2^{m-1} e^{1.03883m}Q^m H(m+1) \cdot \frac{M^{m+1}}{M-1} \cdot (m+2) \alpha}\right)}\right. \\
&\left. \cdot W_{-1}\left(\frac{\log \left( 2\cdot 3^me^{1.03883m}  Q^mM^m \alpha^{m+1} \right)}{2^{m-1}e^{1.03883m}Q^m H(m+1)\cdot \frac{M^{m+1}}{M-1} \cdot (m+2) \alpha}\right) \right\}.
\end{split}
\end{equation}

Then we have
$$
|B_{k,\mu,0}(1) \Lambda|_p > | \lambda_1 S_{k,\mu,1}(1) + \ldots + \lambda_m S_{k,\mu,m}(1) |_p.
$$
\end{lemma}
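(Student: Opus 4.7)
The plan is to argue by contradiction. I would assume the reverse inequality
$$
|B_{k,\mu,0}(1)\Lambda_p|_p \le \left|\lambda_1 S_{k,\mu,1}(1) + \ldots + \lambda_m S_{k,\mu,m}(1)\right|_p,
$$
and, using that $T(k,\mu)\neq 0$ and $M>1$, invoke \Cref{lemma:upperbound1} to obtain the strict inequality $1<\Omega_1$, with $\Omega_1$ as in \eqref{eq:1les}. The whole task is then to show that hypothesis \eqref{eq:kassump} forces $\Omega_1\le 1$, producing the contradiction.

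Next I would separate $\Omega_1$ into a geometric factor in $k$ and a polynomial-in-$k$ factor. Since $M>1$, one has $M^{mk+m+1}-M<M^{mk+m+1}$, and for $k\ge 1$ the bound $mk+k+1\le (m+2)k$ holds. Collecting the $k$-th powers, these estimates give
$$
\Omega_1 \;\le\; C\,(m+2)\,k\,A^k,
$$
where
$$
A \;:=\; 2\cdot 3^m\, e^{1.03883m}\, Q^m M^m \alpha^{m+1} \;=\; \frac{1}{f(m,M,Q,\alpha)}\;\in\;(0,1),
$$
by assumption \eqref{eq:assumpless1}, and
$$
C \;:=\; 2^{m-1}\, e^{1.03883m}\, Q^m H\, (m+1)\,\frac{M^{m+1}}{M-1}\,\alpha.
$$
Note that $C(m+2)$ is precisely the denominator appearing inside $W_{-1}$ in \eqref{eq:kassump}, and that $\log A=-\log f(m,M,Q,\alpha)$ coincides with the numerator there.

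It remains to verify that \eqref{eq:kassump} forces $C(m+2)\,k\,A^k\le 1$. Setting $x := k\log A <0$ so that $A^k = e^x$ and $k = x/\log A$, the desired inequality becomes $x e^x \ge \log A / (C(m+2))$, with both sides negative. Condition \eqref{eq:condW1} says exactly that $\log A /(C(m+2)) \ge -1/e$, placing this ratio in the domain of the branch $W_{-1}$. By \Cref{lemma:W1First} (together with continuity of $W_{-1}$ at the endpoint to cover equality), $x \le W_{-1}\!\bigl(\log A/(C(m+2))\bigr)$ implies $x e^x \ge \log A/(C(m+2))$. Dividing this $x$-inequality by the negative quantity $\log A$ reverses the sign and reproduces precisely the lower bound $k\ge \tfrac{1}{\log A}\,W_{-1}\!\bigl(\log A/(C(m+2))\bigr)$ of \eqref{eq:kassump}; combined with $k\ge 1$ (the first entry of the maximum), this yields $\Omega_1\le 1$, contradicting $1<\Omega_1$.

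The main delicate point is sign-bookkeeping: $\log A$ is negative, so divisions by it repeatedly reverse inequalities, and one must make sure that the correct branch $W_{-1}$ (rather than the principal branch $W_0$) is used. Conditions \eqref{eq:condW1} and \eqref{eq:kassump} are shaped precisely to keep the argument of $W_{-1}$ in its admissible range $[-1/e,0)$ and to select the $k$-large branch of the equation $xe^x=y$; once this is recognised, the rest is straightforward exponential-vs-polynomial bookkeeping.
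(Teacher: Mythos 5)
Your proposal is correct and follows essentially the same route as the paper's proof: assume the reverse inequality, invoke \Cref{lemma:upperbound1} to get $1<\Omega_1$, factor $\Omega_1$ into $C(m+2)\,k\,A^k$ with $A=1/f(m,M,Q,\alpha)$ using $k\ge 1$ and $M>1$, and then use \eqref{eq:condW1}, \eqref{eq:kassump}, and \Cref{lemma:W1First} to force $\Omega_1\le 1$. Your sign-bookkeeping with $W_{-1}$ (including the remark that equality $x=W_{-1}(y)$ still gives $xe^x=y$, which suffices since the earlier inequality is strict) matches the paper's argument, so nothing further is needed.
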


\begin{proof}

Suppose that 
$$
|B_{k,\mu,0}(1) \Lambda|_p \le | \lambda_1 S_{k,\mu,1}(1) + \ldots + \lambda_m S_{k,\mu,m}(1) |_p.
$$ 
By \Cref{lemma:upperbound1}, we have $1 < \Omega_1$. Our goal is to show that $\Omega_1<1$ which gives the wanted contradiction.

Since $k \ge 1$, it holds
\begin{align*}
\Omega_1 = \;&e^{1.03883(mk+m)}Q^{mk+m}H(m+1) 2^{k+m-1} 3^{mk} \cdot \frac{M^{mk+m+1}-M}{M-1} \cdot \left(mk+k+1\right) \alpha^{mk+k+1} \\
< \;&2^{m-1} e^{1.03883m}Q^m H(m+1) \cdot \frac{M^{m+1}}{M-1} \cdot (m+2) \alpha \cdot k \left( 2\cdot 3^me^{1.03883m}Q^mM^m \alpha^{m+1} \right)^k \\
= \;&\frac{2^{m-1}e^{1.03883m}Q^m H(m+1)\cdot \frac{M^{m+1}}{M-1} \cdot (m+2) \alpha}{\log \left( 2\cdot 3^me^{1.03883m}Q^m  M^m \alpha^{m+1} \right)} \\
& \cdot \log \left( 2\cdot 3^me^{1.03883m}Q^m  M^m \alpha^{m+1} \right) k \left( 2\cdot 3^me^{1.03883m}Q^m  M^m \alpha^{m+1} \right)^k.
\end{align*}
Due to assumption \eqref{eq:kassump} and Lemma \ref{lemma:W1First}, the above expression is less than 1, giving the wanted contradiction.
\end{proof}

\subsection{Computing $|B_{k,\mu,0}(1) \Lambda_p|_p$ when $M<1$}

The idea of the proof in the case $M<1$ is exactly the same as in the previous section, just with a bit more complicated formula. Again we need a property of the Lambert $W$ function:

\begin{lemma}
\label{lemma:W1Second}
Let $x>0$ and $y>0$ be real numbers. If $y \le \frac{4}{e^2}$ and $x \ge -2W_{-1}\left(-\frac{\sqrt{y}}{2}\right)$, then we have
$
x^2e^{-x}\le y.
$
\end{lemma}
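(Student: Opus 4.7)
The plan is to reduce the inequality $x^2 e^{-x} \le y$ to a statement about $u e^u$ via a substitution, and then invoke the defining property of $W_{-1}$ together with the monotonicity of $u \mapsto u e^u$ on the relevant interval.

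First I would set $u = -x/2$, so that $u < 0$ (because $x>0$) and
\[
x^2 e^{-x} = 4 u^2 e^{2u} = (2 u e^u)^2.
\]
Since $u<0$, we have $u e^u \le 0$, so the inequality $x^2 e^{-x} \le y$ is equivalent to $|2ue^u| \le \sqrt{y}$, i.e.\ to $u e^u \ge -\sqrt{y}/2$. The task is therefore to derive this last inequality from the hypotheses.

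Next I would record what the hypotheses give. The assumption $y \le 4/e^{2}$ yields $-\sqrt{y}/2 \in [-1/e, 0)$, so $W_{-1}(-\sqrt{y}/2)$ is defined; by the definition of the branch $W_{-1}$, we have $W_{-1}(-\sqrt{y}/2) \le -1$ and
\[
W_{-1}\!\left(-\tfrac{\sqrt{y}}{2}\right) e^{W_{-1}(-\sqrt{y}/2)} = -\tfrac{\sqrt{y}}{2}.
\]
The assumption $x \ge -2 W_{-1}(-\sqrt{y}/2)$ rewrites as $u \le W_{-1}(-\sqrt{y}/2)$; combined with $W_{-1}(-\sqrt{y}/2) \le -1$, this forces $u \le -1$.

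Finally I would invoke monotonicity of $f(t) = t e^{t}$ on $(-\infty,-1]$. Since $f'(t) = (1+t)e^{t} \le 0$ for $t \le -1$, the function $f$ is non-increasing there. Because $u$ and $W_{-1}(-\sqrt{y}/2)$ both lie in $(-\infty,-1]$ and $u \le W_{-1}(-\sqrt{y}/2)$, we get
\[
u e^u = f(u) \ge f\!\left(W_{-1}\!\left(-\tfrac{\sqrt{y}}{2}\right)\right) = -\tfrac{\sqrt{y}}{2},
\]
which is exactly the reduced inequality, finishing the proof.

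The only subtle point is to stay on the correct branch: one must check that $u \le -1$, since $t e^{t}$ is not monotone on all of $(-\infty,0)$. This is guaranteed by the bound $W_{-1}(-\sqrt{y}/2) \le -1$ coming from the hypothesis $y \le 4/e^{2}$, so no genuine obstacle arises.
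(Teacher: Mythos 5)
Your proof is correct and follows essentially the same route as the paper: both reduce $x^2e^{-x}\le y$ to $-\tfrac{x}{2}e^{-x/2}\ge -\tfrac{\sqrt{y}}{2}$ and then exploit the monotone branch $W_{-1}$ (you phrase it via the monotonicity of $te^{t}$ on $(-\infty,-1]$, which is the same fact seen from the inverse side). Your version is a bit more explicit about the branch check $u\le -1$, which the paper leaves implicit, but there is no substantive difference.
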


\begin{proof}
Let us
solve the inequality $x^2e^{-x}\le y$. We can write it in the form 
$$
xe^{-x/2}\le \sqrt{y} \quad\iff \quad -\frac{x}{2}e^{-x/2} \ge -\frac{\sqrt{y}}{2}
$$
since $x,y>0$. The branch $W_{-1}(z)$ is decreasing for all $-\frac{1}{e}\le z<0$. Hence we get $-\frac{x}{2} \le W_{-1}\left(-\frac{\sqrt{y}}{2}\right)$, and the claim follows.

\end{proof}

Finally, we can continue to the case $M<1$:
\begin{lemma}
\label{lemma:contr3}
Let the numbers $\alpha_j$, $\lambda_j$, $p$, and $M<1$ be given as in \Cref{thm:mainresult3}. Assume that $T(k,\mu) \ne 0$ and that conditions \eqref{eq:assumpless1} and \eqref{eq:condH5} hold.
Let $k$ be an integer such that
\begin{multline}\label{eq:kassump2}
k \ge \max \left\{ 1, -2W_{-1}\left( -\frac{1}{2} \sqrt{\frac{\left( \log \left( 2\cdot 3^me^{1.03883m}Q^mM^m \alpha^{m+1} \right) \right)^2}{2^m e^{1.03883m}Q^m H(m+1) \cdot m(m+2) \alpha}} \right) \right. \\
\left. \vphantom{\left( -\frac{1}{2} \sqrt{\frac{\left( \log \left( 2\cdot 3^me^{1.03883m}Q^mM^m \alpha^{m+1} \right) \right)^2}{2^m e^{1.03883m}Q^m H(m+1) \cdot m(m+2) \alpha}} \right)}
\cdot \left(\log \left( 2^{-1}\cdot 3^{-m}e^{-1.03883m}Q^{-m}M^{-m} \alpha^{-(m+1)} \right) \right)^{-1} \right\}.
\end{multline}

Then we have
$$
|B_{k,\mu,0}(1) \Lambda|_p > | \lambda_1 S_{k,\mu,1}(1) + \ldots + \lambda_m S_{k,\mu,m}(1) |_p.
$$
\end{lemma}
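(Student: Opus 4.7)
The plan is to mirror the proof of \Cref{lemma:contr2}, substituting \Cref{lemma:W1Second} for \Cref{lemma:W1First}. Assume for contradiction that
$|B_{k,\mu,0}(1)\Lambda_p|_p \le |\lambda_1 S_{k,\mu,1}(1)+\cdots+\lambda_m S_{k,\mu,m}(1)|_p$. Because $T(k,\mu)\ne 0$ by \Cref{lemma:det2}, the $M<1$ branch of \Cref{lemma:upperbound1} forces $1<\Omega_2$, and it suffices to contradict this by deriving $\Omega_2<1$ from the standing hypotheses.

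First, I would reorganise
\[
\Omega_2 = 2^{m-1}e^{1.03883m}Q^m H(m+1)\alpha\cdot (mk+m)(mk+k+1)\cdot\bigl(2\cdot 3^m e^{1.03883m}Q^m\alpha^{m+1}\bigr)^k
\]
and bound the polynomial factor by $(mk+m)(mk+k+1)\le 2m(m+2)k^2$, valid for $k\ge 1$. Setting $C:=2^m e^{1.03883m}Q^m H(m+1)m(m+2)\alpha$, this yields
\[
\Omega_2 \le C k^2\bigl(2\cdot 3^m e^{1.03883m}Q^m\alpha^{m+1}\bigr)^k.
\]
The base of the $k$-th power is tied to $f$ by the identity $f\cdot\bigl(2\cdot 3^m e^{1.03883m}Q^m M^m\alpha^{m+1}\bigr)=1$, so that $\log(2\cdot 3^m e^{1.03883m}Q^m M^m\alpha^{m+1})=-\log f$; this is exactly the logarithm that appears inside both factors of hypothesis \eqref{eq:kassump2}.

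Next, I would set $x=k\log f>0$ (positive by \eqref{eq:assumpless1}) and $y=\log^2 f/C$. Hypothesis \eqref{eq:condH5} gives $y\le 4/e^2$, placing $y$ in the domain of the branch $W_{-1}$, while hypothesis \eqref{eq:kassump2} is precisely $x\ge -2W_{-1}(-\sqrt{y}/2)$. An application of \Cref{lemma:W1Second} then produces $x^2 e^{-x}\le y$, which after cancelling the $\log^2 f$ on both sides simplifies to $Ck^2\le f^k$. Substituting this back into the bound for $\Omega_2$ and rewriting the base of the $k$-th power via $f$ should collapse the right-hand side to something strictly below $1$, yielding the desired contradiction.

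The step I expect to be the most delicate is this last collapse: reconciling the base $2\cdot 3^m e^{1.03883m}Q^m\alpha^{m+1}$ that appears in the estimate of $\Omega_2$ with the base $2\cdot 3^m e^{1.03883m}Q^m M^m\alpha^{m+1}=1/f$ that governs hypothesis \eqref{eq:kassump2}. These differ by the factor $M^m$, and obtaining a \emph{strict} inequality $\Omega_2<1$ rather than merely $\Omega_2\le 1$ will require carefully using the strict inequality in the polynomial bound $(mk+m)(mk+k+1)<2m(m+2)k^2$ (which one checks directly at $k=1$ and for $k\ge 2$) together with the hypothesis $\log f\ge 1$. Once this book-keeping is in place, the contradiction $1<\Omega_2<1$ closes the proof along the lines of \Cref{lemma:contr2}.
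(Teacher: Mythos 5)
Your setup, the reduction to $\Omega_2<1$ via \Cref{lemma:upperbound1}, the polynomial bound $(mk+m)(mk+k+1)\le 2m(m+2)k^2$, and the Lambert-$W$ step are all fine: with $C:=2^m e^{1.03883m}Q^m Hm(m+1)(m+2)\alpha$ and $f:=f(m,M,Q,\alpha)$, hypotheses \eqref{eq:condH5} and \eqref{eq:kassump2} together with \Cref{lemma:W1Second}, applied to $x=k\log f$ and $y=\log^2 f/C$, do give $Ck^2\le f^k$. The genuine gap is exactly the step you flagged and deferred. Your (correct) estimate is $\Omega_2\le Ck^2\bigl(2\cdot 3^m e^{1.03883m}Q^m\alpha^{m+1}\bigr)^k$, and since $2\cdot 3^m e^{1.03883m}Q^m\alpha^{m+1}=M^{-m}/f$, inserting $Ck^2\le f^k$ only yields $\Omega_2\le M^{-mk}$, which is $\ge 1$ because $M<1$; no contradiction follows. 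The deficit is the factor $M^{-mk}$, which grows exponentially in $k$, whereas the resources you propose to spend on it (the strictness in the polynomial bound, which is a bounded amount of slack independent of $k$, and $\log f\ge 1$) are bounded; so the ``book-keeping'' cannot be made to work as described, and the proposal as written does not prove the lemma.

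For comparison, the paper closes this point by bounding $\Omega_2$ directly by $2^{m-1}e^{1.03883m}Q^m H(m+1)m(m+2)\alpha\, k(k+1)\,\bigl(2\cdot 3^m e^{1.03883m}Q^m M^m\alpha^{m+1}\bigr)^k$, i.e.\ with the extra factor $M^{mk}$ already placed in the base $1/f$, after which the Lambert-$W$ argument is immediate. That factor is precisely what you correctly observed is not available from \Cref{lemma:upperbound1}: $\Omega_2$ contains no power $M^{mk}$ (for $M<1$ the estimate of $B_{k,\mu,0}(1)$ only gives $(M+2)^{mk}\le 3^{mk}$, not $(3M)^{mk}$), and at $k=1$ the paper's displayed inequality would amount to $m+2<(m+2)M^m$. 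So your difficulty is not cosmetic bookkeeping but marks a real discrepancy between the honest bound and the hypotheses phrased via $f$. To finish along your route one would need either the hypotheses \eqref{eq:condH5} and \eqref{eq:kassump2} reformulated with $2^{-1}3^{-m}e^{-1.03883m}Q^{-m}\alpha^{-m-1}=M^m f$ in place of $f$ (together with an assumption making its logarithm positive), or a sharpened estimate of $\max_{0\le j\le m}|B_{k,\mu,j}(1)|$ supplying the missing $M^{mk}$, which the estimates of Section 6 do not provide.
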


\begin{proof}
We use similar ideas as in the proof of \Cref{lemma:contr2}. Suppose that 
$$
|B_{k,\mu,0}(1) \Lambda|_p \le | \lambda_1 S_{k,\mu,1}(1) + \ldots + \lambda_m S_{k,\mu,m}(1) |_p.
$$ 
Then, by \Cref{lemma:upperbound1}, we have $1 < \Omega_2$, where $\Omega_2$ is given as in formula \eqref{eq:omega2}. Our goal is to show $\Omega_2<1$.

Since $k \ge 1$, we have
\begin{align*}
\Omega_2 = \;&e^{1.03883(mk+m)}Q^{mk+m}H(m+1) 2^{k+m-1} 3^{mk} (mk+m) \cdot \left(mk+k+1\right) \alpha^{mk+k+1} \\
< \;&2^{m-1} e^{1.03883m}Q^m H(m+1) \cdot m(m+2) \alpha \cdot k(k+1) \cdot \left( 2\cdot 3^me^{1.03883m}Q^mM^m \alpha^{m+1} \right)^k \\
\le \;&2^{m-1} e^{1.03883m}Q^m H(m+1) \cdot m(m+2) \alpha \cdot 2k^2 e^{k \log \left( 2\cdot 3^me^{1.03883m}Q^mM^m \alpha^{m+1} \right)}\\
= \;&\frac{2^m e^{1.03883m}Q^m Hm(m+1)(m+2) \alpha}{\left( \log \left( 2\cdot 3^me^{1.03883m}Q^mM^m \alpha^{m+1} \right) \right)^2} \\
&\cdot \left( k \log \left( 2\cdot 3^me^{1.03883m}Q^mM^m \alpha^{m+1} \right) \right)^2 e^{k \log \left( 2\cdot 3^me^{1.03883m}Q^mM^m \alpha^{m+1} \right)}.
\end{align*}
Again, assumption \eqref{eq:kassump2} and Lemma \ref{lemma:W1Second} together ensure that the above expression is less than 1, giving a contradiction.
\end{proof}

\section{Proofs for the main results}
\label{sec:proofs}

In this section, we shall prove the main results and corollaries mentioned in \Cref{sec:results}. The following lemma is needed to bound the branch of the Lambert $W$ function that we use:

\begin{lemma}\cite[Theorem 3.2]{Alzahrani2018}
\label{lemma:estLambertW}
Let $t$ be a non-negative real number. Then
$$
-\log(t+1)-t-2+\log(e-1)<W_{-1}\left(-e^{-t-1}\right)\le-\log(t+1)-t-1.
$$
\end{lemma}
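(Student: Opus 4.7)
The statement is attributed to Alzahrani, so the shortest route is simply to cite that reference; for a self-contained argument, the plan is as follows. Set $y := -e^{-t-1}$, $w := W_{-1}(y)$, and abbreviate
\[
x_+ := -\log(t+1) - t - 1, \qquad x_- := -\log(t+1) - t - 2 + \log(e-1).
\]
Recall that $W_{-1}$ is the inverse of the restriction of $f(x) := xe^x$ to $(-\infty,-1]$, and that both $f|_{(-\infty,-1]}$ and $W_{-1}$ are strictly decreasing. First I would check that $x_+,x_-\le -1$ for every $t\ge 0$: indeed $-x_+-1 = \log(t+1)+t\ge 0$, while $-x_--1 = \log(t+1)+t+1-\log(e-1)\ge 1-\log(e-1)>0$. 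With both reference points in the correct range, monotonicity reduces the two-sided bound $x_-<w\le x_+$ to the single chain of real inequalities $f(x_-) > y \ge f(x_+)$.

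For the upper bound, a direct computation gives
\[
f(x_+) = x_+ e^{x_+} = \frac{-(\log(t+1)+t+1)}{(t+1)\,e^{t+1}},
\]
so after multiplying through by the positive quantity $(t+1)e^{t+1}$, the inequality $y\ge f(x_+)$ collapses to $\log(t+1)\ge 0$, which obviously holds for $t\ge 0$ (with equality at $t=0$, consistent with $W_{-1}(-1/e)=-1$).

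For the lower bound, compute analogously
\[
f(x_-) = \frac{(\log(e-1)-\log(t+1)-t-2)(e-1)}{(t+1)\,e^{t+2}},
\]
and reduce $-e^{-t-1}<f(x_-)$ to the single-variable inequality
\[
h(t) := \frac{e(t+1)}{e-1} - \log(t+1) - t - 2 + \log(e-1) \ge 0.
\]
Then $h'(t) = \frac{1}{e-1}-\frac{1}{t+1}$ vanishes uniquely at $t^\star = e-2$, $h$ is convex, and the key algebraic miracle $h(t^\star)=0$ can be verified by plugging in: the $\log(t+1)$ and $\log(e-1)$ terms cancel and so do the linear terms, leaving $e - (e-2) - 2 = 0$.

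The main obstacle is precisely this last step: the bound is sharp, and the proof only works because the constant $\log(e-1)$ has been chosen so that the minimum of $h$ is exactly $0$ at $t^\star = e-2$. Any slackening of that constant would break the lower bound, and any tightening would make $h$ negative. I would also remark in passing that strictness of the left inequality in the statement actually fails at the single point $t=e-2$; the intended reading is either ``$\le$'' or ``strict on $[0,\infty)\setminus\{e-2\}$'', and nothing in the later use of the lemma depends on the distinction.
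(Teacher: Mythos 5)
Your verification is correct, and it is genuinely more than the paper offers: the paper gives no argument at all for this lemma, it simply cites Alzahrani--Salem, Theorem 3.2, so there is no internal proof to compare against. Your route --- checking that both reference points $x_\pm$ lie in $(-\infty,-1]$, using that $x\mapsto xe^x$ is strictly decreasing there with inverse $W_{-1}$, and reducing the two bounds to $y\ge f(x_+)$ (which collapses to $\log(t+1)\ge 0$) and $f(x_-)>y$ (which reduces to $h(t)=\frac{e(t+1)}{e-1}-\log(t+1)-t-2+\log(e-1)\ge 0$, with $h$ convex, $h'(e-2)=0$, $h(e-2)=0$) --- checks out line by line, and it has the added value of exhibiting exactly where each bound is sharp: the upper bound at $t=0$ (where $W_{-1}(-1/e)=-1$) and the lower bound at $t=e-2$ (where $W_{-1}\left(-e^{-(e-1)}\right)=-e$). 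Your closing caveat is also accurate and worth stating precisely: since $h(e-2)=0$, the strict inequality on the left, as transcribed in the lemma, is in fact an equality at $t=e-2$, so only the non-strict version is provable; and this is indeed harmless for the paper, because the lemma is only invoked to bound $k$ from above after dividing by the negative quantity $\log\left(2\cdot 3^m e^{1.03883m}Q^mM^m\alpha^{m+1}\right)$, and the subsequent strict inequalities survive because $\lceil x\rceil < x+1$ holds strictly regardless. One small wording slip: in the lower-bound step you say you reduce $-e^{-t-1}<f(x_-)$ to $h(t)\ge 0$, whereas the strict form requires $h(t)>0$; since you immediately address this in the final remark, it is a matter of phrasing rather than a gap.
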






\subsection{Proofs of the case $M>1$}

In this section, we prove the results mentioned in \Cref{sec:resultsMgreater}. We start with the main result which provides an explicit estimate for the linear form $\Lambda_p$ for all integers $H$.

\subsubsection{Proof of the main theorem}

\begin{proof}[Proof of \Cref{thm:mainresult2}]
As was described in \Cref{sec:generalidea}, it is sufficient to prove an upper bound for the function $T(k,\mu)$.

By the definition of the number $T(k,\mu)$ and estimate \eqref{eq:estmaxB}, we get
\begin{equation}
\label{eq:estW2}
\begin{split}
|T(k,\mu)| &\le Q^{mk+m}\left( \sum_{j=0}^m |\lambda_j| \right) \max_{0 \le j \le m} \left| B_{k,\mu,j}(1)\right| \\
&\le e^{1.03883m}Q^{mk+m}H(m+1)2^{k+m-1}3^{mk} \cdot
\frac{M^{mk+m+1}-M}{M-1} \\
& < e^{1.03883m}Q^mH(m+1) 2^{m-1} \cdot \frac{M^{m+1}}{M-1} \cdot \left( 2\cdot3^m e^{1.03883m}Q^m M^m \right)^k \\
& = e^{1.03883m}Q^mH(m+1) 2^{m-1} \cdot \frac{M^{m+1}}{M-1} \cdot e^{k \log \left( 2\cdot3^m e^{1.03883m} Q^mM^m \right)}.
\end{split}
\end{equation}
We take a closer look at the terms containing $k$ on the last line of \eqref{eq:estW2}.


Following condition \eqref{eq:kassump}, we choose 
\begin{equation*}
\begin{split}
k = \;& \left\lceil \frac{1}{\log \left( 2\cdot 3^me^{1.03883m}Q^m  M^m \alpha^{m+1} \right)} \vphantom{W_{-1}\left(\frac{\log \left( 2\cdot 3^me^{1.03883m}  M^m \alpha^{m+1} \right)}{2^{m-1}e^{1.03883m} H(m+1)\cdot \frac{M^{m+1}}{M-1} \cdot (m+2) \alpha}\right) }\right. \\
&\cdot \left. W_{-1}\left(\frac{\log \left( 2\cdot 3^me^{1.03883m}  Q^mM^m \alpha^{m+1} \right)}{2^{m-1}e^{1.03883m}Q^m H(m+1)\cdot \frac{M^{m+1}}{M-1} \cdot (m+2) \alpha}\right) \right\rceil. 
\end{split}
\end{equation*}
Taking the ceiling function of a positive term, we have $k \ge 1$. Let us bound $k$ from above by considering the function inside the ceiling function.

We use Lemma \ref{lemma:estLambertW} to estimate the term 
$$
W_{-1}\left(\frac{\log \left( 2\cdot 3^me^{1.03883m}Q^m  M^m \alpha^{m+1} \right)}{2^{m-1}e^{1.03883m}Q^m H(m+1)\cdot \frac{M^{m+1}}{M-1} \cdot (m+2) \alpha}\right).
$$
First, we have
\begin{align*}
& \frac{\log \left( 2\cdot 3^me^{1.03883m}Q^m  M^m \alpha^{m+1} \right)}{2^{m-1}e^{1.03883m}Q^m H(m+1)\cdot \frac{M^{m+1}}{M-1} \cdot (m+2) \alpha} \\
= \;& \frac{-\log \left( 2^{-1}\cdot 3^{-m}e^{-1.03883m}  Q^{-m}M^{-m} \alpha^{-m-1} \right)}{e^{1.03883m +m\log Q+ \log H + \log(m+1) + (m-1)\log 2 + (m+1)\log M - \log(M-1) + \log(m+2) + \log \alpha}} \\
=\; &-e^{-t-1},
\end{align*}
where 
\begin{align*}
t = \;&1.03883m +m\log Q+ \log H + \log(m+1) + (m-1)\log 2 + (m+1)\log M \\
&- \log(M-1) + \log(m+2) + \log \alpha -\log \log \left( 2^{-1}\cdot 3^{-m}e^{-1.03883m}  Q^{-m} M^{-m} \alpha^{-m-1} \right)-1 \\
=\;&\log H+R_1(m,M,Q,\alpha)-1
\end{align*}
and where $R_1(m,M,Q,\alpha)$ is defined as in \eqref{def:R}. Because of assumption \eqref{eq:condW1}, we have $t>0$.

Hence, by Lemma \ref{lemma:estLambertW}, we can estimate 
\begin{align*}
&\left\lceil \frac{1}{\log \left( 2\cdot 3^me^{1.03883m}  Q^mM^m \alpha^{m+1} \right)} \vphantom{W_{-1}\left(\frac{\log \left( 2\cdot 3^me^{1.03883m}  Q^mM^m \alpha^{m+1} \right)}{2^{m-1}e^{1.03883m}Q^m H(m+1)\cdot \frac{M^{m+1}}{M-1} \cdot (m+2) \alpha}\right)}\right. \\
&\left. \cdot W_{-1}\left(\frac{\log \left( 2\cdot 3^me^{1.03883m}  Q^mM^m \alpha^{m+1} \right)}{2^{m-1}e^{1.03883m}Q^m H(m+1)\cdot \frac{M^{m+1}}{M-1} \cdot (m+2) \alpha}\right) \right\rceil \\
< \;&\frac{-\log(t+1)-t-2+\log(e-1)}{\log \left( 2\cdot 3^me^{1.03883m}  Q^mM^m \alpha^{m+1} \right)}+1 \\
< \;&\frac{\log(t+1)+t+2}{\log \left( 2^{-1}\cdot 3^{-m}e^{-1.03883m}  Q^mM^{-m} \alpha^{-m-1} \right)}+1.
\end{align*}

Now, the exponent of $e$ on the last line of estimate \eqref{eq:estW2} can be bounded as
\begin{align*}
&k \log \left( 2\cdot 3^m e^{1.03883m} Q^m M^m \right) \\
< \;& \left(\frac{\log\left(\log H+R_1(m,M,Q,\alpha)\right)+\log H+R_1(m,M,Q,\alpha)+1}{\log \left( 2^{-1}\cdot 3^{-m}e^{-1.03883m}  Q^mM^{-m} \alpha^{-m-1} \right)}+1\right) \\
&\cdot\log\left( 2\cdot 3^m e^{1.03883m} Q^m M^m \right) \\
=\;&(\log H) \cdot \frac{\log\left( 2\cdot 3^m e^{1.03883m} Q^m M^m \right)}{\log f\left(m,M,Q,\alpha \right)} \left(1+\frac{\log\left(\log H+R_1(m,M,Q,\alpha)\right)}{\log H}\right) \\
&+\left(\frac{R_1(m,M,Q,\alpha)+1}{\log f\left(m,M,Q,\alpha \right)}+1\right)\log\left( 2\cdot 3^m e^{1.03883m} Q^m M^m \right) \\
=\;&(\log H) \left(-\frac{(m+1) \log\alpha}{\log f(m,M,Q,\alpha)}-1\right)\left(1+\frac{\log\left(\log H+R_1(m,M,Q,\alpha)\right)}{\log H}\right) \\
&+\left(\frac{R_1(m,M,Q,\alpha)+1}{\log f(m,M,Q,\alpha)}+1\right)\log\left( 2\cdot 3^m e^{1.03883m} Q^m M^m \right),
\end{align*}
where $f(m,M,Q,\alpha)$ is defined as in \eqref{eq:fMQa}.

From \eqref{eq:estW2} we have now obtained
\begin{equation*}
    \left|T(k,\mu)\right| < \frac{H^{\omega(m, M, Q, \alpha,H)}}{c(m, M, Q, \alpha)},
\end{equation*}
where 
\begin{align*}
    c(m,M, Q, \alpha)=\;&\left(2^{m-1}(m+1)e^{1.03883m} Q^m\frac{M^{m+1}}{M-1}\right)^{-1}\\
    &\cdot \left(2\cdot 3^m e^{1.03883m} Q^m M^m \right)^{-\frac{R_1(m,M,Q,\alpha)+1}{\log f(m,M,Q,\alpha)}-1}
\end{align*}
and
\begin{align*}
     \omega(m, M, Q, \alpha, H)=\;&\left(-\frac{(m+1) \log\alpha}{\log f(m,M,Q,\alpha)}-1\right) \\
     &\cdot\left(1+\frac{\log\left(\log H+R_1(m,M,Q,\alpha)\right)}{\log H}\right)+1.
\end{align*}
The claim follows by simplifying the previous expressions.
\end{proof}

\subsubsection{Proofs for the corollaries}

Let us now move to the corollaries of Section \ref{sec:resultsMgreater}, starting with \Cref{corollary:loglogH} which illustrates the size of the exponent of $H$ more concretely.

\begin{proof}[Proof of Corollary \ref{corollary:loglogH}]
The claim follows from Theorem \ref{thm:mainresult2} by estimating the exponent $\omega_1(m, M, Q, \alpha, H) = \left(-\frac{(m+1) \log\alpha}{\log f(m,M,Q,\alpha)}-1\right) \left(1+\frac{\log\left(\log H+R_1(m,M,Q,\alpha)\right)}{\log H}\right)+1$.

We have
\begin{equation*}
-\frac{(m+1) \log\alpha}{\log f(m,M,Q,\alpha)}-1=\frac{\log \left(2\cdot3^m\cdot e^{1.03883m}Q^mM^m\right)}{\log f(m,M,Q,\alpha)}>0
\end{equation*}
due to assumption \eqref{eq:assumpless1}. Also
\begin{equation}
\label{eq:loglogHR}
    \frac{\log\left(\log H+R_1(m,M,Q,\alpha)\right)}{\log H} \ge 0
\end{equation}
when $H>1$ because of assumption \eqref{eq:condW1}. A lower bound for $H^{-\omega(m, M, Q, \alpha, H)}$ can therefore be found by finding an upper bound for the term in \eqref{eq:loglogHR}. For $R_1(m,M,Q,\alpha) \le 0$, an upper bound can be $\log\log H/\log H$. For $R_1(m,M,Q,\alpha) > 0$, the mean value theorem implies
$$
\frac{\log\left(\log H+R_1(m,M,Q,\alpha)\right)}{\log H}<\frac{\log\log H+R_1(m,M,Q,\alpha)/\log H}{\log H}.
$$
Using the assumptions $\log H \ge R_1(m,M,Q,\alpha)$ and $H \ge 3$, the right-hand side is equal to
$$
\frac{\log\log H}{\log H}\left(1+\frac{R_1(m,M,Q,\alpha)}{(\log H) \log \log H}\right) \le \frac{\log\log H}{\log H}\left(1+\frac{1}{\log \log 3}\right)<\frac{11.633\log\log H}{\log H}.
$$
Using this estimate for the term in \eqref{eq:loglogHR}, it follows that
$$
\omega_1(m, M, Q, \alpha, H) < \frac{(m+1)\log\alpha}{f(m,M,Q,\alpha)}+11.633 \cdot \left(1+\frac{(m+1)\log\alpha}{f(m,M,Q,\alpha)}\right)\frac{\log\log H}{\log H},
$$
and we are done.
\end{proof}

Next is the case where the exponent of $H$ is $-(m+1+\varepsilon)$:

\begin{proof}[Proof of \Cref{corollary:m1first}]
The goal is to prove that
$$
c_1(m,M,Q,\alpha)H^{-\omega(m,M,Q,\alpha,H)} \ge H^{-m-1-\varepsilon}.
$$
Then the result follows using \Cref{thm:mainresult2}.

First of all, since we have assumed $H \ge \left(c_1(m,M,Q,\alpha)\right)^{-3/\varepsilon}$, we also have $c_1(m,M,Q,\alpha) \ge H^{-\varepsilon/3}$. Hence we are done with the coefficient $c_1(m,M,Q,\alpha)$.

Let us now move on to the exponent $\omega_1(m,M,Q,\alpha,H)$. From assumption \eqref{eq:condHm1Big}, we get $H \ge e^{R_1(m,M,Q,\alpha)}$, and hence
\begin{align*}
    &-\omega_1(m, M, Q, \alpha, H) \\
    =\;&-\left(-\frac{(m+1) \log\alpha}{\log f(m,M,Q,\alpha)}-1\right) \left(1+\frac{\log\left(\log H+R_1(m,M,Q,\alpha)\right)}{\log H}\right)-1 \\
    \ge \;&\frac{(m+1) \log\alpha}{\log f(m,M,Q,\alpha)}+\left(\frac{(m+1) \log\alpha}{\log f(m,M,Q,\alpha)}+1\right)\frac{\log\log (2H)}{\log H}.
\end{align*}    
By assumption \eqref{eq:m1epsilonpart}, the last line is
\begin{equation}
\label{eq:omegam1}
    \ge -(m+1)-\frac{\varepsilon}{3}+\left(-m-\frac{\varepsilon}{3}\right)\frac{\log\log (2H)}{\log H}.
\end{equation}
The assumption
$$
H \ge \frac{1}{2}e^{-\left(3m/\varepsilon+1\right)W_{-1}\left(-2^{-\varepsilon/(3m+\varepsilon)}\frac{\varepsilon}{3m+\varepsilon}\right)}
$$
now implies
$$
\frac{\log\log(2H)}{\log H} \le \frac{\varepsilon}{3m+\varepsilon}.
$$
Hence the last term in estimate \eqref{eq:omegam1} is at least $-\frac{\varepsilon}{3}$, and the claim follows.
\end{proof}

Finally, we prove \Cref{corollary:main2Simp}:
\begin{proof}[Proof of \Cref{corollary:main2Simp}]
The claim can be proved similarly as \Cref{thm:mainresult2}.
Let $t$ be as in the proof of \Cref{thm:mainresult2}. Since condition \eqref{eq:condlogf} holds and we have assumed
\begin{equation*}
\log H \ge \max\left\{1.03883m, (m+1)\log M, \log(m+2), m\log Q\right\},
\end{equation*}
we also have
\begin{equation*}
\log H \ge \max\left\{\log(m+1), (m-1)\log 2\right\}.
\end{equation*}
Thus $t \le 7\log H -1$. Due to Lemma \ref{lemma:estLambertW}, condition \eqref{eq:condlogf}, and since $x-1>\log x$ for all $x>0$, we can estimate 
\begin{align*}
&\left\lceil \frac{1}{\log \left( 2\cdot 3^me^{1.03883m}  Q^mM^m \alpha^{m+1} \right)} \vphantom{W_{-1}\left(\frac{\log \left( 2\cdot 3^me^{1.03883m}  Q^mM^m \alpha^{m+1} \right)}{2^{m-1}e^{1.03883m}Q^m H(m+1)\cdot \frac{M^{m+1}}{M-1} \cdot (m+2) \alpha}\right)}\right. \\
&\left. \cdot W_{-1}\left(\frac{\log \left( 2\cdot 3^me^{1.03883m}  Q^mM^m \alpha^{m+1} \right)}{2^{m-1}e^{1.03883m}Q^m H(m+1)\cdot \frac{M^{m+1}}{M-1} \cdot (m+2) \alpha}\right) \right\rceil \\
< \;&\frac{-\log(t+1)-t-2+\log(e-1)}{\log \left( 2\cdot 3^me^{1.03883m}  Q^mM^m \alpha^{m+1} \right)}+1 \\
< \;&2t+3 \\
\le \;&14\log H+1.
\end{align*}

The claim follows from \eqref{eq:estW2} by using the bound $k < 14\log H+1$.
\end{proof}

\subsection{Proofs of the case $M<1$}

Let us now move to the case $M < 1$. We start with the first result in \Cref{sec:resultsSmall}:

\begin{proof}[Proof of Theorem \ref{thm:mainresult3}]
We proceed similarly as in the proof of Theorem \ref{thm:mainresult2}, taking into account the different estimates for $\left| B_{k,\mu,j}(1)\right|$. Now, by estimate \eqref{eq:maxBMle1} and using $k \ge 1$, we have
\begin{equation}\label{eq:estTkmu}
\begin{split}
|T(k,\mu)| &\le Q^{mk+m}\left( \sum_{j=0}^m |\lambda_j| \right) \max_{0 \le j \le m} \left| B_{k,\mu,j}(1)\right| \\
&\le Q^{mk+m} (m+1)H 2^{k+m-1} 3^{mk} d_{mk+m}(mk+m) \\
&\le Q^m (m+1)H 2^{m-1} e^{1.03883m} k\left(m+\frac{m}{k}\right) \left( 2 \cdot 3^m Q^m e^{1.03883m} \right)^k \\
&\le 2m(m+1)Q^m  H 2^{m-1} e^{1.03883m} k e^{k \log \left( 2 \cdot 3^m Q^m e^{1.03883m} \right)}.
\end{split}
\end{equation}
We concentrate on the terms containing the number $k$ on the last row of the previous inequality.



Along Lemma \ref{lemma:contr3}, we choose
\begin{align*}
k = \,&\left\lceil -2W_{-1}\left( -\frac{1}{2} \sqrt{\frac{\left( \log \left( 2\cdot 3^me^{1.03883m}Q^mM^m \alpha^{m+1} \right) \right)^2}{2^m e^{1.03883m}Q^m H(m+1) \cdot m(m+2) \alpha}} \right) \right.\\
&\left. \cdot \left(\log \left( 2^{-1}\cdot 3^{-m}e^{-1.03883m}Q^{-m}M^{-m} \alpha^{-(m+1)} \right) \right)^{-1} \vphantom{ -2W_{-1}\left( -\frac{1}{2} \sqrt{\frac{\left( \log \left( 2\cdot 3^me^{1.03883m}Q^mM^m \alpha^{m+1} \right) \right)^2}{2^m e^{1.03883m}Q^m H(m+1) \cdot m(m+2) \alpha}} \right)}\right\rceil.
\end{align*}
Since the formula inside the ceiling function is positive, we have $k \ge 1$.
Now
\begin{align*}
k \le \;&-2W_{-1}\left( -\frac{1}{2} \sqrt{\frac{\left( \log \left( 2\cdot 3^me^{1.03883m}Q^mM^m \alpha^{m+1} \right) \right)^2}{2^m e^{1.03883m}Q^m H(m+1) \cdot m(m+2) \alpha}} \right) \\
&\cdot \left(\log \left( 2^{-1}\cdot 3^{-m}e^{-1.03883m}Q^{-m}M^{-m} \alpha^{-(m+1)} \right) \right)^{-1} +1.
\end{align*}

First we estimate $W_{-1}$ using Lemma \ref{lemma:estLambertW}:
\begin{align*}
&W_{-1}\left( -\frac{1}{2} \sqrt{\frac{\left( \log \left( 2\cdot 3^me^{1.03883m}Q^mM^m \alpha^{m+1} \right) \right)^2}{2^m e^{1.03883m}Q^m H(m+1) \cdot m(m+2) \alpha}} \right) \\
= \;&W_{-1} \left(-e^{-t-1} \right) \\
> \;&-\log(t+1) - t -2 +\log (e-1),
\end{align*}
where
\begin{align*}
t = \;&\log \left( \frac{2 \sqrt{2^m e^{1.03883m}Q^m H(m+1) \cdot m(m+2) \alpha}}{\log \left( 2^{-1}\cdot 3^{-m} e^{-1.03883m}Q^{-m} M^{-m} \alpha^{-(m+1)} \right)} \right) -1 \\
= \;&\log \left( 2^{\frac{m}{2}+1} e^{0.519415m}Q^{\frac{m}{2}} H^{\frac{1}{2}}m^{\frac{1}{2}}(m+1)^{\frac{1}{2}}(m+2)^{\frac{1}{2}} \alpha^{\frac{1}{2}} \right) \\
&- \log \log \left( 2^{-1}\cdot 3^{-m} e^{-1.03883m}Q^{-m} M^{-m} \alpha^{-(m+1)} \right) -1 \\
= \;&\frac{1}{2} \log H + \left( \frac{m}{2}+1 \right) \log 2 + 0.519415m + \frac{m}{2} \log Q \\
&+ \frac{1}{2} \left( \log m + \log (m+1) + \log (m+2) + \log \alpha \right) \\
&- \log \log \left( 2^{-1}\cdot 3^{-m} e^{-1.03883m}Q^{-m} M^{-m} \alpha^{-(m+1)} \right) -1.
\end{align*}

According to condition \eqref{eq:condH3}, we have
$$
\log H > \max \left\{ \left( \frac{m}{2}+1 \right) \log 2, 0.519415m, \frac{m}{2} \log Q, \log (m+2) \right\}.
$$
It is also assumed that $f(m, M, Q, \alpha) \ge e$, so
$$
- \log \log \left( 2^{-1}\cdot 3^{-m} e^{-1.03883m}Q^{-m} M^{-m} \alpha^{-(m+1)} \right) \le 0.
$$
Hence
$
t < 5\log H -1.
$

Combining the results above, we get
\begin{equation}\label{eq:estk2}
\begin{split}
k &< \frac{2t + 2\log (t+1) +4 -2\log (e-1)}{\log \left( 2^{-1}\cdot 3^{-m}e^{-1.03883m}Q^{-m}M^{-m} \alpha^{-(m+1)} \right)} +1 \\
&< \frac{2(5\log H -1) + 2\log (5\log H) +4 -2\log (e-1) + \log f(m, M, Q, \alpha)}{\log f(m, M, Q, \alpha)} \\
&< \frac{10 \log H + 2\log \log H + 2\log 5  +2 -2\log (e-1) + \log f(m, M, Q, \alpha)}{\log f(m, M, Q, \alpha)} \\
&< \frac{10 \log H + 2\log \log H +5 + \log f(m, M, Q, \alpha)}{\log f(m, M, Q, \alpha)} \\
&= \frac{\left( 10 + \frac{2\log \log H}{\log H} \right) \log H + 5+ \log f(m, M, Q, \alpha)}{\log f(m, M, Q, \alpha)}.
\end{split}
\end{equation}
In \eqref{eq:condH3}, we assumed that $H>e^e$, so $\frac{2\log \log H}{\log H} < \frac{2}{e}$. It follows that
\begin{equation}\label{eq:boundforKsmall}
k<\frac{11 \log H+5 + \log f(m, M, Q, \alpha)}{\log f(m, M, Q, \alpha)}.
\end{equation}

Now we bound $e^{k \log \left( 2 \cdot 3^m Q^m e^{1.03883m} \right)}$ using \eqref{eq:boundforKsmall}:
\begin{align*}
e^{k \log \left( 2 \cdot 3^m Q^m e^{1.03883m} \right)} &<\left( e^{11\log H +5+ \log f(m, M, Q, \alpha)} \right)^{\frac{\log \left( 2 \cdot 3^m Q^m e^{1.03883m} \right)}{\log f(m, M, Q, \alpha)}} \\
&=\left(H^{11}\cdot e^5 \cdot f(m, M, Q, \alpha) \right)^{\frac{\log \left( 2 \cdot 3^m Q^m e^{1.03883m} \right)}{\log f(m, M, Q, \alpha)}}.
\end{align*}

It remains to estimate the rest of the terms on the last line of estimate \eqref{eq:estTkmu}. For that, we further estimate $k$ using \eqref{eq:estk2} and the fact that $\log f(m, M, Q, \alpha) \ge 1$:
\begin{align*}
k &< \frac{\left( 10 + \frac{2\log \log H}{\log H} + \frac{5}{\log H} \right) \log H + \log f(m, M, Q, \alpha)}{\log f(m, M, Q, \alpha)} \\
&< \left( 10 + \frac{2\log \log H}{\log H} + \frac{5}{\log H} \right) \log H + 1 \\
&< \left( 10 + \frac{2\log \log H}{\log H} + \frac{6}{\log H} \right) \log H \\
&< 13\log H,
\end{align*}
where we again made use of the fact $\frac{2\log \log H}{\log H} < \frac{2}{e}$.

Let us finally continue from \eqref{eq:estTkmu}. According to the previous estimates, we have
\begin{align*}
|T(k,\mu)| \le \;&Q^m m(m+1) H 2^m e^{1.03883m} k e^{k \log \left( 2 \cdot 3^m Q^m e^{1.03883m} \right)} \\
< \;&Q^m m(m+1) H 2^m e^{1.03883m} \cdot 13\log H  \\
&\cdot \left(H^{11}e^5 \cdot f(m, M, Q, \alpha) \right)^{\frac{\log \left( 2 \cdot 3^m Q^m e^{1.03883m} \right)}{\log f(m, M, Q, \alpha)}} \\
= \;& 13m(m+1) (2Qe^{1.03883})^m  \cdot\left(e^5 \cdot f(m, M, Q, \alpha)\right)^{\frac{\log \left( 2 \cdot 3^m Q^m e^{1.03883m} \right)}{\log f(m, M, Q, \alpha)}} \\
&\cdot H^{\frac{11\log \left( 2 \cdot 3^m Q^m e^{1.03883m} \right)}{\log f(m, M, Q, \alpha)}+\frac{\log\log H}{\log H}+1} \\
= \;&\left(26m(m+1)\left(6Q^2e^{2.07766}\right)^me^{\frac{5\log \left( 2 \cdot 3^m Q^m e^{1.03883m} \right)}{\log f(m, M, Q, \alpha)}}\right) \\
&\cdot H^{\frac{11\log \left( 2 \cdot 3^m Q^m e^{1.03883m} \right)}{\log f(m, M, Q, \alpha)}+\frac{\log\log H}{\log H}+1} \\
= \;& \frac{H^{\omega(m,M,Q,\alpha,H)}}{c(m, M, Q, \alpha)},
\end{align*}
where we denote
$$
c(m, M, Q, \alpha) := \left(26m(m+1)\left(6Q^2e^{2.07766}\right)^me^{\frac{5\log \left( 2 \cdot 3^m Q^m e^{1.03883m} \right)}{\log f(m, M, Q, \alpha)}}\right)^{-1}
$$
and
$$
\omega(m, M, Q, \alpha, H) := 
\frac{11\log \left( 2 \cdot 3^m Q^m e^{1.03883m} \right)}{\log f(m, M, Q, \alpha)}+\frac{\log\log H}{\log H}+1.
$$
The wanted result follows from the upper bound for $T(k,\mu)$.

\end{proof}

Finally we have \Cref{thm:mainresult4}, where the exponent of $H$ is of the best possible form:
\begin{proof}[Proof of \Cref{thm:mainresult4}]
The idea of the proof is to divide the estimate for the term $|T(k,\mu)|$ into different parts based on how they depend on the number $H$. The proof is similar to the proof of \Cref{corollary:m1first}.

According to the first three paragraphs of the proof of \Cref{thm:mainresult3}, we can find an upper bound \eqref{eq:estTkmu} for the term $|T(k,\mu)|$. We bound the number $k$ which depends on the number $t$, where both $k$ and $t$ are given as in the first three paragraphs of the proof of \Cref{thm:mainresult3}. Since $\log f(m,M,Q,\alpha) \ge 1$, we have
\begin{align*}
t = \;&\frac{1}{2} \log H + \left( \frac{m}{2}+1 \right) \log 2 + 0.519415m + \frac{m}{2} \log Q \\
&+ \frac{1}{2} \left( \log m + \log (m+1) + \log (m+2) + \log \alpha \right) \\
&- \log \log \left( 2^{-1}\cdot 3^{-m} e^{-1.03883m}Q^{-m} M^{-m} \alpha^{-(m+1)} \right) -1 \\
< \;& \frac{1}{2} \log H + \left( \frac{m}{2}+1 \right) \log 2 + 0.519415m + \frac{m}{2} \log Q \\
&+ \frac{1}{2} \left( \log m + \log (m+1) + \log (m+2) + \log \alpha \right)-1 \\
= \;& \frac{1}{2}\log H+R_2(m,Q,\alpha)-1.
\end{align*}
Note that $R_2(m,Q,\alpha) \ge 0$ since $Q\alpha \ge 1$ because of the assumption $M<1$.

Combining the previous estimate and the results in the first three paragraphs of the proof of \Cref{thm:mainresult3}, and noting that from \eqref{eq:assumpHepsilon} it also follows $H \ge e^{2R_2(m,Q,\alpha)}$, we get
\begin{equation}\label{eq:boundfork}
\begin{split}
k &< \frac{2t + 2\log (t+1) +4 -2\log (e-1)}{\log \left( 2^{-1}\cdot 3^{-m}e^{-1.03883m}Q^{-m}M^{-m} \alpha^{-(m+1)} \right)} +1 \\
&< \frac{\log H+2R_2(m,Q,\alpha)+2\log\log H+1}{\log f(m,M,Q,\alpha)}+1.
\end{split}
\end{equation}

Keeping mind that $\log f(m,M,Q,\alpha) \ge 1$ and $H \ge \max\{e^{2R_2(m,Q,\alpha)},e^e\}$ and using the bound \eqref{eq:boundfork} for $k$, we get from \eqref{eq:estTkmu} that
\begin{align*}
|T(k,\mu)| \le \;&Q^m m(m+1) H 2^m e^{1.03883m} k e^{k \log \left( 2 \cdot 3^m Q^m e^{1.03883m} \right)} \\
< \;& (2Q)^m m(m+1)e^{1.03883m}H \cdot 2\left(\log H+\log\log H+1\right) \\
&\cdot e^{\left(\frac{\log H+2R_2(m,Q,\alpha)+2\log\log H+1}{\log f(m,M,Q,\alpha)}+1\right)\log \left( 2 \cdot 3^m Q^m e^{1.03883m} \right)} \\
\le \;&2^{m+4+2R_2(m,Q,\alpha)}\cdot9^{R_2(m,Q,\alpha)+1}\left(e^{1.03883}Q\right)^{(2R_2(m,Q,\alpha)+3)m}m(m+1)\\
&\cdot H^{\frac{\log \left( 2 \cdot 3^m Q^m e^{1.03883m} \right)}{\log f(m,M,Q,\alpha)}\left(1+\frac{2\log\log H}{\log H}\right)+\frac{\log\log H+\log\log\log H}{\log H}+1}.
\end{align*}

By assumption \eqref{eq:assumpHepsilon}, the second last line on the previous estimate is at most $H^{\varepsilon/3}$. Further, according to
assumption \eqref{eq:flogmeps}, we have
$$
\frac{\log \left( 2 \cdot 3^m Q^m e^{1.03883m} \right)}{\log f(m,M,Q,\alpha)}+1=-\frac{\log(M^m\alpha^{m+1})}{\log f(m,M,Q,\alpha)} \le m+1+\frac{\varepsilon}{3}.
$$
and by assumption \eqref{eq:assumpHepsilonSecond}, we have $H \ge e^e$, so
\begin{align*}
&\frac{\log \left( 2 \cdot 3^m Q^m e^{1.03883m} \right)}{\log f(m,M,Q,\alpha)}\cdot\frac{2\log\log H}{\log H}+\frac{\log\log H+\log\log\log H}{\log H} \\
<\;&\left(2\left(m+\frac{\varepsilon}{3}\right)+2\right)\frac{\log\log H}{\log H} \\
<\;&\frac{\varepsilon}{3}.
\end{align*}
Putting these together, we get the wanted result.
\end{proof}

\section{Examples}
\label{sec:examples}

Let us finally take a look at a few examples of the results, starting with a case where $m=1$ and we use \Cref{corollary:m1first}:
\begin{example}
\label{example:pm1}
Let $\varepsilon=0.1$. Let $p$ be a prime, $a\ge 1$, and $\lambda_0$ and $\lambda_1$ be integers such that $p^a \ge (6e^{1.03883})^{21} \approx 7\cdot10^{25}$ and 
$$
H=\max\{|\lambda_0|, |\lambda_1|\} \ge \max\left\{\left(c_1(1,p^a,1,1/p^a)\right)^{-31},\frac{1}{2}e^{-31W_{-1}\left(-2^{-1/31}/31\right)} \right\}.
$$
For example, if $p=11$ and $a=25$, it is sufficient that $H \ge 3\cdot 10^{1672}$.

By \Cref{corollary:m1first}, we have
\begin{equation*}
    \left|\lambda_1\boldsymbol{\log}\left(1+p^a\right)-\lambda_0 \right|_p> H^{-2-0.1}.
\end{equation*}

\end{example}

In the next example, we consider the case $m=2$ and use \Cref{thm:mainresult2}:

\begin{example}
\label{example:pfirst}
Let $p \ge 149$ be a prime number and $\lambda_0,\lambda_1$, and $\lambda_2$ be integers. By \Cref{thm:mainresult2}, we have
\begin{equation}
\label{eq:ptwoFirst}
\begin{split}
    &\left|\lambda_0+\lambda_1\boldsymbol{\log}(1+p)+\lambda_2\boldsymbol{\log}(1-p)\right|_p \\
    >\;&\frac{2}{9e^{1.07766} p^3\log f(2,p,1,1/p)} \cdot e^{\frac{-3\log p}{\log f(2,p,1,1/p)}\left(R_1(2,p,1,1/p)+1\right)} \\
    &\cdot H^{-\left(\frac{3 \log p}{\log f(2,p,1,1/p)}-1\right)\left(1+\frac{\log\left(\log H+R_1(2,p,1,1/p)\right)}{\log H}\right)-1},
\end{split}     
\end{equation}
where $H=\max\{|\lambda_0|,|\lambda_1|,|\lambda_2|\}$,
\begin{equation*}
    f(2,p,1,1/p)=\frac{p}{18e^{2.07766}},
\end{equation*}
and
\begin{equation*}
    R_1(2,p,1,1/p) = 2.07766+ \log24+ \log \frac{p^2}{p-1} -\log \log f(2, p, 1, 1/p).
\end{equation*}

For example, if $p=149$, then the right-hand side of \eqref{eq:ptwoFirst} is
\begin{align*}
    &\frac{2e^{\frac{-3\log 149}{\log \left(149/(18e^{2.07766})\right)}\left(3.07766+ \log \frac{6\cdot149^2}{37} -\log \log \frac{149}{18e^{2.07766}}\right)}}{9\cdot149^3e^{1.07766}\log \left(149/(18e^{2.07766})\right)} \\
    &\cdot H^{-\left(\frac{3 \log 149}{\log \left(149/(18e^{2.07766})\right)}-1\right)\left(1+\frac{\log\left(\log H+2.07766+ \log \frac{6\cdot149^2}{37} -\log \log \frac{149}{18e^{2.07766}}\right)}{\log H}\right)-1} \\
    \approx \;&\frac{1}{10^{2050}}H^{-418\cdot\left(1+\frac{\log\left(\log H+14\right)}{\log H}\right)}.
\end{align*}

\end{example}

Let us finally look at an example concerning the case $M<1$:

\begin{example}
We consider the linear form
$$
\lambda_0 + \lambda_1 \boldsymbol{\log} \left( 1+\frac{p^a}{1+p^a} \right), \quad \lambda_0, \lambda_1 \in \mathbb{Z},
$$
where the numbers $a$ and $H=\max \{\lambda_0, \lambda_1\}$ are assumed to be such that
$$
p^a > 6e^{2.03883} \approx 46.09,
$$
and
$$
 H \ge \left\{\frac{e^2 \left( \log \left(  \frac{6 e^{1.03883}}{p^{a}} \right) \right)^2}{48e^{1.03883} \left(1+\frac{1}{p^a}\right)},\sqrt{1+p^a},e^e \right\}.
$$

The conditions of Theorem \ref{thm:mainresult3} are fulfilled; in addition we need
$$
\frac{\log \left( \frac{1}{p^a(1+p^a)} \right)}{\log \left(  \frac{6 e^{1.03883}}{p^{a}} \right)} \ge -2-\frac{\varepsilon}{3},
$$
\begin{align*}
    H \ge \left(2^{6+2R_2(m,Q,\alpha)} \cdot 9^{R_2(m,Q,\alpha)+1}\left(e^{1.03883}(1+p^a)\right)^{2R_2(m,Q,\alpha)+3} \right)^{3/\varepsilon}
\end{align*}
with
$$
R_2 = 2\log 2 + \frac{1}{2} \log 3 + 0.519415 + \frac{1}{2} \left( \log (1+p^a) - \log (p^a) \right),
$$
and
\begin{align*}
    H \ge e^{-\frac{6(2+\varepsilon/3)}{\varepsilon}W_{-1}\left(-\frac{\varepsilon}{6(2+\varepsilon/3)}\right)}.
\end{align*}
Then Theorem \ref{thm:mainresult4} gives
$$
\left| \lambda_0 + \lambda_1 \boldsymbol{\log} \left( 1+\frac{p^a}{1+p^a} \right) \right|_p > \frac{1}{H^{2+\varepsilon}}.
$$

For example, for $\varepsilon =0.1$, $p=11$, and $a=25$ as in Example \ref{example:pm1}, having $H \ge 3.6 \cdot 10^{6482}$ gives the lower bound $\frac{1}{H^{2+0.1}}$.
\end{example}

\section*{Acknowledgements}
We would like to thank Tapani Matala-aho for his support during the research process. We are also grateful to the anonymous referee for their comments that helped to improve the manuscript.

\newpage

\appendix

\section{Appendix: Comparison to the results of others}

\subsection{V\"a\"an\"anen \& Xu}\label{app:VaananenXu}

V\"a\"an\"anen and Xu \cite{Vaananen1988} use Padé approximations of the second kind to prove a lower bound for a linear form in the values of $G$-functions at points from an algebraic number field. In their result, the exponent of $H$ is the best possible form $m+1+\varepsilon$ whereas in our results it is of that type only in some of the cases. However, in order to apply the result proved by V\"a\"an\"anen and Xu, one needs to know the coefficients of the system of linear differential equations that the functions $y_i(z)$ satisfy (see the theorem below); in our results these are not needed. 

In the $p$-adic case they prove:

\begin{theorem} \cite[Corollary 2]{Vaananen1988}
Let $\lambda_1 y_1(z) + \ldots + \lambda_m y_m(z)$ be a linear form in $\mathbb{Q}G$-functions $y_i(z)$, where the coefficients $\lambda_i$ are integers. Let $\frac{a}{b}$ be a rational number satisfying $\gcd (a,b)=1$, $\frac{a}{b}T\left(\frac{a}{b}\right) \neq 0$, and
$$
\left|\frac{a}{b}\right|_p \le \frac{1}{2C}.
$$
Here $T(z)$ is the common denominator of the coefficients of the system of linear differential equations that the functions $y_i$ satisfy, and $C \ge 1$ is a constant related to the size of the coefficients of the functions $y_i$ present in the definition of $G$-functions.

Let $u>0$ and $\varepsilon <1$ be given. There exists an explicit constant $C_1$ depending only on $u$, $\varepsilon$, and the functions $y_i$ such that if
$$
\left( \max \left\{ |a|,|b| \right\} \right)^{u\varepsilon} > C_1 \left( \left|\frac{a}{b}\right|_p \max \left\{ |a|,|b| \right\} \right)^{(m+1)(m+\varepsilon)},
$$
then
$$
\left| \lambda_0 + \lambda_1 y_1 \left( \frac{a}{b} \right) + \ldots + \lambda_m y_m \left( \frac{a}{b} \right) \right|_p > \frac{1}{H^{m+1+\varepsilon}}
$$
for all
$$
H=\max \left\{ \lambda_0, \lambda_1, \ldots, \lambda_m \right\} > \max \left\{ C_1, \frac{2B}{(1-u)\varepsilon} \right\}.
$$

Above
\begin{multline*}
B= \frac{(\mu_1 +1 +2\delta ms) \log \left( \prod_p \max \left\{ 1, \left|\frac{a}{b}\right|_p \right\} \right)}{\mu_1 \log \left( \prod_p \max \left\{ 1, \left|\frac{a}{b}\right|_p \right\} \right) - A} \cdot \log \left( 4^{\frac{1}{\delta m}} (m+1) \right) \\
+ A + (1+2\delta ms) \log \left( \prod_p \max \left\{ 1, \left|\frac{a}{b}\right|_p \right\} \right),
\end{multline*}
where $\delta$ and $\delta_1$ are given numbers such that $0 < \delta < \frac{1}{3m^2(s+1)}$ and $0<\delta_1 <1$, $s$ is the maximum of the degrees of $T(z)$ and of $T(z)$ multiplied by the coefficients of the mentioned differential system, and
$$
\mu_1 = \frac{1}{m} + 3\delta \delta_1 m -\delta m (2s+3) -\delta_1 \left( 1+\frac{1}{m} \right),
$$
$$
A = \frac{3\left( 1+\frac{1}{m} \right)\log C +2}{\delta m}+2\delta m\log \left( \prod_p \max \left\{ 1, |T|_p \right\} \right).
$$
\end{theorem}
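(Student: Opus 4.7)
\bigskip

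The plan is to follow the same contradiction-and-product-formula strategy that produced \Cref{thm:mainresult3}, but to track the prefactors more carefully so that the exponent of $H$ separates cleanly into $m+1+\varepsilon/3$ coming from hypothesis \eqref{eq:flogmeps}, plus two $\varepsilon/3$ terms absorbed by the size hypotheses \eqref{eq:assumpHepsilon} and \eqref{eq:assumpHepsilonSecond}. Concretely, I would reuse the setup of the proof of \Cref{thm:mainresult3}: choose $k$ as in \Cref{lemma:contr3} so that $|B_{k,\mu,0}(1)\Lambda_p|_p>|\lambda_1 S_{k,\mu,1}(1)+\ldots+\lambda_m S_{k,\mu,m}(1)|_p$ holds, conclude $|T(k,\mu)|_p=|Q^{mk+m}B_{k,\mu,0}(1)\Lambda_p|_p$, and then apply the product formula together with estimate \eqref{eq:estTkmu}:
\[
|T(k,\mu)| \le 2m(m+1)Q^m H\,2^{m-1}e^{1.03883m}\cdot k\cdot e^{k\log(2\cdot 3^m Q^m e^{1.03883m})}.
\]

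The first real step is to re-bound $k$. Using the same Lambert-$W$ argument as in \Cref{thm:mainresult3} but keeping $R_2(m,Q,\alpha)$ explicit, one gets $t<\tfrac12\log H+R_2(m,Q,\alpha)-1$ (valid because $\log f(m,M,Q,\alpha)\ge 1$), and then \Cref{lemma:estLambertW} yields
\[
k<\frac{\log H+2R_2(m,Q,\alpha)+2\log\log H+1}{\log f(m,M,Q,\alpha)}+1.
\]
Substituting this into the estimate for $|T(k,\mu)|$ produces an upper bound of the form $C(m,Q,\alpha,R_2)\cdot H^{\Omega(H)}$, where the constant $C$ depends only on $m,Q,R_2$ and on $\bigl(2\cdot 3^m Q^m e^{1.03883m}\bigr)^{2R_2+3}$ (this is exactly where the awkward factor in \eqref{eq:assumpHepsilon} comes from), while the exponent factorizes as
\[
\Omega(H)=\Bigl(\tfrac{\log(2\cdot 3^m Q^m e^{1.03883m})}{\log f}+1\Bigr)\Bigl(1+\tfrac{2\log\log H}{\log H}\Bigr)+\tfrac{\log\log H+\log\log\log H}{\log H}.
\]

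The key algebraic observation is the identity
\[
\frac{\log(2\cdot 3^m Q^m e^{1.03883m})}{\log f(m,M,Q,\alpha)}+1=-\frac{\log(M^m\alpha^{m+1})}{\log f(m,M,Q,\alpha)},
\]
so hypothesis \eqref{eq:flogmeps} bounds the first parenthesis by $m+1+\varepsilon/3$. Then \eqref{eq:assumpHepsilonSecond}, together with the inverse property $W_{-1}(ye^y)=y$, forces $\log\log H/\log H\le \varepsilon/[6(m+1+\varepsilon/3)]$, so the factor $1+2\log\log H/\log H$ together with the additive $(\log\log H+\log\log\log H)/\log H$ contributes at most $\varepsilon/3$ to the exponent. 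Finally, hypothesis \eqref{eq:assumpHepsilon} is precisely what makes $C(m,Q,\alpha,R_2)\le H^{\varepsilon/3}$. Combining the three $\varepsilon/3$ contributions gives $|T(k,\mu)|<H^{m+1+\varepsilon}$, and the product formula $1\le|T(k,\mu)||T(k,\mu)|_p\le|T(k,\mu)||\Lambda_p|_p$ yields the claim.

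The main obstacle will be the careful bookkeeping in the third paragraph: one needs to verify that the three hypotheses \eqref{eq:flogmeps}, \eqref{eq:assumpHepsilon}, \eqref{eq:assumpHepsilonSecond} are exactly tuned so that each contributes at most $\varepsilon/3$ without introducing additional dependencies among them. In particular, the inversion of the $W_{-1}$ inequality in \eqref{eq:assumpHepsilonSecond} and the verification that the prefactor $C(m,Q,\alpha,R_2)$ is indeed bounded by the right-hand side of \eqref{eq:assumpHepsilon} require several elementary but tedious manipulations. Everything else is a mild adaptation of the proofs already carried out for \Cref{thm:mainresult3} and \Cref{corollary:m1first}.
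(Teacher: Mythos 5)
Your proposal does not prove the statement under review. The statement is the quoted result of V\"a\"an\"anen and Xu \cite[Corollary 2]{Vaananen1988} on linear forms in values of arbitrary $\mathbb{Q}G$-functions $y_1(z),\ldots,y_m(z)$ at a rational point $\frac{a}{b}$, with all the accompanying data ($T(z)$ the denominator of the differential system, the constant $C$, the parameters $\delta,\delta_1,s,\mu_1,A,B$, and the admissibility condition $\left(\max\{|a|,|b|\}\right)^{u\varepsilon} > C_1\left(\left|\frac{a}{b}\right|_p\max\{|a|,|b|\}\right)^{(m+1)(m+\varepsilon)}$). This is an external theorem reproduced in Appendix \ref{app:VaananenXu} purely for comparison; the present paper does not prove it, and its proof lives in \cite{Vaananen1988}, where one needs Pad\'e-type constructions for a general system of $G$-functions satisfying a linear differential system, together with the arithmetic bookkeeping that produces the quantities $A$, $B$, $\mu_1$ above. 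What you have written is instead a sketch of the proof of \Cref{thm:mainresult4}: your hypotheses \eqref{eq:flogmeps}, \eqref{eq:assumpHepsilon}, \eqref{eq:assumpHepsilonSecond}, the quantities $f(m,M,Q,\alpha)$ and $R_2(m,Q,\alpha)$, the bound \eqref{eq:estTkmu}, and the choice of $k$ from \Cref{lemma:contr3} all belong to that theorem, not to the V\"a\"an\"anen--Xu result.

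The gap is therefore structural, not a matter of bookkeeping: your argument is built on Pad\'e approximations specific to the logarithm (the coefficients $\sigma_i$, the Rhin--Toffin determinant, the estimates of \Cref{sec:estimates}), so it cannot say anything about a linear form in general $G$-functions, and none of the objects appearing in the statement ($C$, $T(z)$, $s$, $\delta$, $\delta_1$, $\mu_1$, $A$, $B$, the exponent $(m+1)(m+\varepsilon)$, the threshold $\frac{2B}{(1-u)\varepsilon}$) are produced or even addressed by your construction. If the intended task was \Cref{thm:mainresult4}, your outline does match the paper's proof of that theorem quite closely (the same choice of $k$, the same splitting of the exponent into $m+1+\varepsilon/3$ plus two $\varepsilon/3$ contributions controlled by \eqref{eq:assumpHepsilon} and \eqref{eq:assumpHepsilonSecond}); but as a proof of the stated V\"a\"an\"anen--Xu theorem it does not get off the ground, and the correct course is either to cite \cite{Vaananen1988} or to reconstruct their $G$-function machinery, which is an entirely different undertaking.
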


\subsection{Yu}\label{app:Yu}
In his papers \cite{Yu1989,Yu1990,Yu1994,Yu1998,Yu1999,Yu2007}, Yu has established many results for linear forms in $p$-adic logarithms. His results are mainly more general than ours since they handle algebraic numbers $\alpha_1-1, \alpha_2-1, \ldots, \alpha_n-1$ instead of rational numbers, even though he also proves results specially for rational numbers.

Using formulation \eqref{eq:ourForm} in Yu's results, the exponent $\omega$ is at least of size $e^{2m}m^{3/2}$ when $H$ (or $B$ in Yu's notation) is large enough whereas our results are of the best possible type $m+1+\varepsilon$ in some of the cases. 

Precisely, in \cite{Yu2007} Yu proves the following: 
\begin{theorem} \cite{Yu2007} 
Let $p$ be a prime number and $m \ge 2$ an integer. Let $\frac{x_1}{y_1}, \ldots, \frac{x_m}{y_m}$ be non-zero rational numbers and $A_1, \ldots, A_m$ real numbers such that
$$
A_i \ge \max \{|x_i|, |y_i|, e\}, \quad 1 \le i \le m.
$$
Let $b_1, \ldots, b_m$ be non-zero integers such that $\left(\frac{x_1}{y_1}\right)^{b_1} \cdots \left(\frac{x_m}{y_m}\right)^{b_m} \neq 1$. Let $B$ and $B_m$ be real numbers such that
$$
B \ge \max \{|b_1|, \ldots, |b_m|, 3\}, \quad B \ge B_m \ge |b_m|.
$$
Suppose that $v_p(b_n) \le v_p(b_i)$ for all $1 \le i \le m$. Let $\delta$ be a real number with $0 < \delta \le \frac{1}{2}$. Then
\begin{multline}\label{eq:Yu2007}
v_p \left( \left(\frac{x_1}{y_1}\right)^{b_1} \cdots \left(\frac{x_m}{y_m}\right)^{b_m} - 1 \right) < (16e)^{2(m+1)} m^\frac{3}{2} (\log (2m))^2 \cdot \frac{p}{(\log p)^2} \\
\cdot \max \left\{ (\log A_1) \cdots (\log A_m) (\log T), \frac{\delta B}{B_m} \right\},
\end{multline}
where
$$
T = \frac{2B_n}{\delta} \cdot e^{(m+1)(6m+5)} p^{m+1} (\log A_1) \cdots (\log A_{m-1}).
$$
\end{theorem}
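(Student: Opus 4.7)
The theorem in question is Yu's explicit $p$-adic lower bound for a linear form in logarithms of rational numbers, not one of the present paper's own results, so the proof plan is essentially Yu's Baker-style transcendence method adapted to the $p$-adic setting. I would not approach it via Pad\'e approximation (the method of the present paper, which gives exponents of size $m+1+\varepsilon$ but much worse constants), but rather via the classical auxiliary-function / zero-estimate framework that produces dependence in $p$ and in $m$ of the indicated shape.

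\medskip

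First, I would pass from the multiplicative quantity $\Xi := (x_1/y_1)^{b_1} \cdots (x_m/y_m)^{b_m} - 1$ to an additive linear form. After a preliminary Kummer-type descent (replacing each $x_i/y_i$ by a suitable power so that it lies in the domain of convergence of $\boldsymbol{\log}$, and absorbing the cost into the numerical constants), one has $v_p(\Xi) = v_p(\Lambda)$ up to a controlled loss, where $\Lambda = b_1 \boldsymbol{\log}(x_1/y_1) + \cdots + b_m \boldsymbol{\log}(x_m/y_m)$. The statement to prove thus becomes a lower bound $|\Lambda|_p \ge p^{-\Omega}$ with $\Omega$ the right-hand side of the displayed inequality.

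\medskip

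The core of the argument is then a contradiction scheme. Choose parameters $L_0, L_1, \ldots, L_m, S, T_0$ as explicit functions of the data $A_i, B, p, \delta$ and $m$, calibrated so that a counting (Siegel's lemma over $\Z$ or over the appropriate $p$-adic ring) produces a nonzero auxiliary polynomial $P(z_0, z_1, \ldots, z_m) \in \Z[z_0, \ldots, z_m]$ of partial degrees $\le L_i$ and controlled height, such that the entire function
\[
\Phi(z) := P\bigl(z, (x_1/y_1)^z, \ldots, (x_m/y_m)^z\bigr)
\]
and its derivatives of order $< T_0$ vanish $p$-adically at the integers $s = 0, 1, \ldots, S$. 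One then performs a $p$-adic extrapolation: a Schwarz-type lemma on the disk, combined with the smallness of $\Lambda$, forces $\Phi^{(t)}(s)$ to be $p$-adically small at a much larger range of integers $s$, hence (by arithmetic — a rational integer with large $p$-adic valuation and small absolute height must vanish) zero there. Iterating the extrapolation $K$ times produces a vanishing statement that is incompatible with a multiplicity/zero estimate on the commutative group variety $\mathbb{G}_a \times \mathbb{G}_m^m$, which is the hard arithmetic input (Philippon's or Nesterenko's zero estimate on group varieties, in the form refined by Yu).

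\medskip

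The main obstacle — and the reason the factor $(16e)^{2(m+1)} m^{3/2} (\log 2m)^2$ and the factor $p/(\log p)^2$ appear with exactly those shapes — is the simultaneous optimization of the many parameters $L_i, S, T_0, K$ against three competing constraints: (i) Siegel's lemma must be solvable, producing a polynomial with height roughly $\prod \log A_i$; (ii) the $p$-adic Schwarz/extrapolation lemma costs a factor growing linearly in $p$ but saves a factor $(\log p)^{-2}$ from the local analysis of the $p$-adic exponential; (iii) the zero estimate demands $\prod L_i \gtrsim $ (extrapolation range)$^{m+1}$. Making these balance under the hypothesis $v_p(b_m) \le v_p(b_i)$ — which is what lets one use $b_m$ as the distinguished variable for extrapolation — is the delicate combinatorial-analytic step, and it is where nearly all of Yu's hundred-plus pages in \cite{Yu2007} are spent. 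The role of $\delta$ and the $\max\{\,\cdot\,,\delta B/B_m\}$ clause is to absorb a residual "dependence-in-$b_m$" term that cannot be eliminated from the extrapolation without losing the explicit constants; the precise placement of $\delta$ in $T$ reflects exactly this compromise.
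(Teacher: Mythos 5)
This statement is quoted in Appendix~\ref{app:Yu} purely for comparison; the paper gives no proof of it (nor could it, since its Pad\'e-approximation machinery does not produce bounds of Yu's shape), so there is nothing in the source to check your argument against. You correctly recognise this and correctly identify the provenance: the result is Yu's, proved by the Baker--Philippon--W\"ustholz transcendence method ($p$-adic auxiliary function, iterated $p$-adic extrapolation, zero estimate on $\mathbb{G}_a\times\mathbb{G}_m^m$), not by the present paper's Pad\'e approximations.

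As a blind \emph{sketch} of Yu's proof your outline is a fair summary of the architecture, but it is not a proof and could not be graded as one: every load-bearing step is named rather than carried out. In particular, the Kummer descent that justifies $v_p(\Xi)=v_p(\Lambda)$ ``up to a controlled loss'' is precisely where the factor $p/(\log p)^2$ and the restriction to the region of convergence of $\boldsymbol{\log}$ must be accounted for, and you do not do this; the parameter calibration that produces $(16e)^{2(m+1)}m^{3/2}(\log 2m)^2$ and the $\max\{\cdot,\delta B/B_m\}$ term is asserted to ``balance'' without any of the inequalities that would show it does; and the zero estimate you invoke is exactly the hard input that Yu spends most of \cite{Yu2007} refining, so appealing to it as a black box leaves the explicit constants unverified. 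None of this is a flaw in your understanding of \emph{where} the difficulties lie --- your three-constraint description (Siegel, $p$-adic Schwarz, multiplicity estimate) is apt --- but it is a roadmap, not a derivation, and in any case it bears no relation to the methods of the paper under review, which treats this theorem as an external citation only.
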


As Yu explains in \cite[Section 1.1]{Yu1989}, we have
$$
v_p \left( \left(\frac{x_1}{y_1}\right)^{b_1} \cdots \left(\frac{x_m}{y_m}\right)^{b_m} - 1 \right) = v_p \left( b_1 \log \left( \frac{x_1}{y_1} \right) + \ldots + b_m \log \left( \frac{x_m}{y_m} \right) \right)
$$
when $v_p \left( \frac{x_i}{y_i}-1 \right) > \frac{1}{p-1}$ for all $1 \le i \le m$. Hence the upper bound in \eqref{eq:Yu2007} translates into a lower bound
\begin{multline*}
\left| b_1 \log \left( \frac{x_1}{y_1} \right) + \ldots + b_m \log \left( \frac{x_m}{y_m} \right) \right|_p \\
> p^{- (16e)^{2(m+1)} m^\frac{3}{2} (\log (2m))^2 \cdot \frac{p}{(\log p)^2} \cdot \max \left\{ (\log A_1) \cdots (\log A_m) (\log T), \frac{\delta B}{B_m} \right\}}.
\end{multline*}

\end{document}